\documentclass[12pt]{article}
\usepackage{a4wide}
\usepackage{amsmath,amssymb,amsthm}
\usepackage[mathscr]{eucal}
\usepackage{tikz}
\usetikzlibrary{positioning,automata}
\usetikzlibrary{arrows,calc}
\usetikzlibrary{decorations.markings,plotmarks}
\usepackage{graphics}
\usepackage{color}
\usepackage{hyperref}
\usepackage{dsfont}
\usepackage{enumerate}
\usepackage{mathtools}
\usepackage{cite}

\newtheorem{theorem}{Theorem}
\newtheorem{proposition}[theorem]{Proposition}
\newtheorem{corollary}[theorem]{Corollary}
\newtheorem{lemma}[theorem]{Lemma}
\newtheorem{fact}[theorem]{Fact}

\theoremstyle{definition}
\newtheorem{definition}[theorem]{Definition}

\newtheorem{conjecture}[theorem]{Conjecture}
\newtheorem{remark}[theorem]{Remark}

\newtheorem{obs}[theorem]{Observation}

\numberwithin{theorem}{section}

\DeclareMathOperator{\Aut}{Aut}

\DeclareMathOperator{\dist}{dist}
\DeclareMathOperator{\supp}{supp}
\DeclareMathOperator{\mindeg}{mindeg}
\DeclareMathOperator{\motion}{motion}

\newcommand{\footremember}[2]{%
    \footnote{#2}
    \newcounter{#1}
    \setcounter{#1}{\value{footnote}}%
}

\title{On the spectral gap and the automorphism group of distance-regular graphs}
\author{Bohdan Kivva\footremember{alley}{University of Chicago, e-mail: bkivva@uchicago.edu}}

\begin{document}
\maketitle

\vspace{-0.8cm}
\begin{abstract}
We prove that a distance-regular graph with a dominant distance is a spectral expander. The key ingredient of the proof is a new inequality on the intersection numbers. We use the spectral gap bound to study the structure of the automorphism group.
 
The minimal degree of a permutation group $G$ is the minimum number of points not fixed by non-identity elements of $G$. Lower bounds on the minimal degree have strong structural consequences on $G$. In 2014 Babai proved that the automorphism group of a strongly regular graph with $n$ vertices has minimal degree $\geq c n$, with known exceptions. Strongly regular graphs correspond to distance-regular graphs of diameter 2. Babai conjectured that  Hamming and Johnson graphs are the only primitive distance-regular graphs of diameter $d\geq 3$ whose automorphism group  has sublinear minimal degree.  We confirm this conjecture for non-geometric primitive distance-regular graphs of bounded diameter.  We also show if the primitivity assumption is removed, then only one additional family of exceptions arises, the cocktail-party graphs. We settle the geometric case in a companion paper.

\end{abstract}


\section{Introduction}

In this paper we prove a tradeoff between combinatorial and spectral parameters of distance-regular graphs and apply the result to the structure of the automorphism group.

\subsection{Main results: Spectral expansion of distance-regular graphs}

 We say that a $k$-regular graph is a \textit{spectral $\eta$-expander} for $\eta>0$, if every non-principal eigenvalue $\xi_i$ of its adjacency matrix satisfies $|\xi_i|\leq k(1-\eta)$. We say that a graph on $n$ vertices has \textit{$(1-\varepsilon)$-dominant distance} $t$, if among the $\binom{n}{2}$ pairs of distinct vertices at least $(1-\varepsilon)\binom{n}{2}$ are at distance $t$.

In our main result on spectral expansion we show that distance-regular graphs of bounded diameter are spectral expanders if they have $(1-\varepsilon)$-dominant distance for sufficiently small $\varepsilon>0$, depending only on the diameter.

\begin{theorem}\label{thm:main-spectral-intr} For every $d\geq 2$ there exist $\epsilon = \epsilon(d)>0$ and $\eta = \eta(d)>0$ such that the following holds. If a distance-regular graph $X$ of diameter $d$ has a $(1-\epsilon)$-dominant distance, then $X$ is a spectral $\eta$-expander. 
\end{theorem}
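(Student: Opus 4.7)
The plan is to use the dominant distance condition to show that the distance-$t$ matrix $A_t$ is spectrally close to $J-I$, and then pull this information back to $A=A_1$ via the identity $A_t=v_t(A)$, where $v_t$ is the distance-$t$ polynomial. From $k_t \geq (1-\epsilon)(n-1)$ one deduces $\sum_{j\neq t,\,j\geq 1} k_j \leq \epsilon(n-1)$, and since $\sum_{j\geq 1}A_j=J-I$, the matrix $(J-I)-A_t$ has operator norm at most $\epsilon(n-1)$. As $A_t$ and $J-I$ share the all-ones eigenvector and the non-principal eigenvalues of $J-I$ are all equal to $-1$, every non-principal eigenvalue of $A_t$ lies in $[-1-\epsilon(n-1),\,-1+\epsilon(n-1)]$. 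Translating via $A_t=v_t(A)$, every non-principal eigenvalue $\theta$ of $A$ satisfies $|v_t(\theta)+1|\leq \epsilon(n-1)$, while $v_t(k)=k_t\geq (1-\epsilon)(n-1)$. One must therefore show that this huge drop from $k_t$ to near $-1$ cannot happen in a small neighborhood of $\pm k$.

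The key ``new inequality on intersection numbers'' would be a quantitative bound of the form $v_t'(k)\leq C(d)\,k_t/k$. I would prove it by differentiating the three-term recurrence $xv_j(x)=b_{j-1}v_{j-1}(x)+a_j v_j(x)+c_{j+1} v_{j+1}(x)$ at $x=k$. Writing $f_j:=v_j'(k)/k_j$, one obtains
\[
b_j(f_{j+1}-f_j) \,=\, 1 + c_j(f_j-f_{j-1}), \qquad f_0=0,\ f_1=1/k.
\]
Dominance yields $b_{t-1}/c_t = k_t/k_{t-1} \geq (1-\epsilon)/\epsilon$; combined with the monotonicity $b_j\geq b_{t-1}$ and $c_j\leq c_t$ for $j\leq t-1$, this gives $c_j/b_j\leq \epsilon/(1-\epsilon)$ throughout the relevant range. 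Iterating the recurrence with this small ratio should yield $k\cdot f_t\leq C(d)$ with $C(d)$ depending only on the diameter.

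Finally, I would upgrade the pointwise derivative bound to a one-sided Lipschitz estimate $v_t(x)\geq \bigl(1-C'(d)(k-x)/k\bigr)\,k_t$ on an interval $[(1-\eta)k,\,k]$ with $\eta=\eta(d)$ small, by iterating the recurrence for general $x$ near $k$ and tracking $v_t(x)/k_t$. If some non-principal eigenvalue $\theta$ fell in this interval, then $v_t(\theta)\geq\bigl(1-C'(d)\eta\bigr)k_t$, contradicting $v_t(\theta)\leq -1+\epsilon(n-1)$ once $\epsilon=\epsilon(d)$ is chosen small enough. A symmetric argument controls the interval near $-k$; here the dominance hypothesis first rules out $-k$ itself being an eigenvalue for $n$ large, since otherwise $v_t(-k)=(-1)^t k_t$ would contradict the $A_t$ bound. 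The main obstacle is precisely the upgrade from the pointwise derivative bound at $x=k$ to uniform control of $v_t$ on a $d$-dependent neighborhood: the polynomial can oscillate substantially on $[-k,k]$ in general, so one must use the recurrence and the intersection-number inequalities along the whole interval, not only at the endpoint.
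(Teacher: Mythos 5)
Your route is genuinely different from the paper's: you work with the distance polynomial $v_t$ and the operator $A_t\approx J-I$, whereas the paper works with the $(d+1)\times(d+1)$ tridiagonal intersection matrix $T(X)$, showing via Ostrowski's perturbation theorem (Lemma~\ref{eigenvalues-approximation}) that if some $b_i,c_i$ are both $O(\varepsilon k)$ then the non-principal eigenvalues are within $O(\varepsilon^{1/(d+1)}k)$ of the $a_j$'s. The paper's ``new inequality on intersection numbers'' is not a derivative bound but the combinatorial Growth-induced Tradeoff (Theorem~\ref{b-c-ineq}), proved by a triangle-inequality count in an auxiliary graph, and the dominance hypothesis is used only to rule out the alternative branch of a dichotomy (Theorem~\ref{thm:main-spectral-gap}, then Theorem~\ref{thm:main-expansion}). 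The $A_t$-as-perturbation-of-$J-I$ observation, the derivative recurrence for $f_j = v_j'(k)/k_j$, and the Lipschitz-upgrade plan are all absent from the paper.

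However, your outline has a gap that is more than technical, and it sits exactly where you would need the paper's tradeoff. The derivative bound $v_t'(k)\le C(d)\,k_t/k$ is equivalent (after unwinding the recurrence $b_j(f_{j+1}-f_j)=1+c_j(f_j-f_{j-1})$) to $\sum_{j=0}^{t-1}1/b_j = O(1/k)$, i.e.\ to every $b_j$ for $j\le t-1$ being a constant fraction of $k$. The information you extract from dominance and monotonicity, namely $c_j/b_j\le c_t/b_{t-1}=k_{t-1}/k_t\le\epsilon/(1-\epsilon)$ for $j\le t-1$, only controls the ratio $c_j/b_j$ and says nothing about $k/b_j$; e.g.\ $c_2=1$ and $b_1$ of constant size are perfectly compatible with $c_2/b_1$ being tiny while $k/b_1$ blows up, and then $k\cdot f_2\approx k/b_1$ is unbounded and your key inequality fails. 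To rule this out one must prove the nontrivial fact that $b_j\ge\alpha_j(d)\,k$ for all $j<t$, which is exactly what the paper extracts from Corollary~\ref{cor:b-inequality} via the tradeoff (Theorem~\ref{b-c-ineq}), applied starting from $b_0=k$, $c_1=1$; your proposal has no substitute. Your second self-identified obstacle, upgrading the pointwise derivative bound at $x=k$ to a uniform Lipschitz estimate for $v_t$ on an $\eta k$-neighborhood of $\pm k$, is also genuinely open in your sketch, and it is precisely the difficulty that the paper's reduction to the small $(d+1)\times(d+1)$ matrix $T(X)$ is designed to bypass: perturbing $T(X)$ directly controls the spectrum of $A$ without ever having to track the (potentially wildly oscillating) polynomial $v_t$ on $[-k,k]$.
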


We prove this theorem in equivalent formulation as Theorem~\ref{thm:main-expansion}, and its slightly modified variant stated as Theorem~\ref{thm:main-spectral-gap} is the heart of the proof of our main result  on the minimal degree of the automorphism group (Theorem~\ref{thm:main-motion-intr}).

The key ingredient of our proof of the spectral expansion is the following (we believe new) inequality  for the intersection numbers of a distance-regular graph. This inequality provides a tradeoff between $b_{j+1}$ and $c_{j+2}$, if there are (significantly) more vertices at distance $j+1$ from a vertex $v$, than vertices at distance $j$ from $v$. In particular, this inequality implies that if $b_j$ is large and $c_{j+1}$ is small, then $c_{j+2}$ and $b_{j+1}$ cannot be small simultaneously.

\begin{theorem}[Growth-induced tradeoff]\label{b-c-ineq-intr}
Let $X$ be a distance-regular graph of diameter $d\geq 2$. Let $0 \leq j\leq d-2$. Assume $b_j>c_{j+1}$ and let $C = b_j/c_{j+1}$. Then for any $1\leq s\leq j+1$ we have
\begin{equation}
b_{j+1}\left(\sum\limits_{t=1}^{s}\frac{1}{b_{t-1}}+\sum\limits_{t=1}^{j+2-s}\frac{1}{b_{t-1}}\right)+c_{j+2}\sum\limits_{t=1}^{j+1}\frac{1}{b_{t-1}}\geq 1-\frac{4}{C-1}.
\end{equation}
\end{theorem}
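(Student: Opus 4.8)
The plan is to fix a vertex $v$ of $X$, work with its distance partition $\Gamma_0(v),\dots,\Gamma_d(v)$ (so $|\Gamma_i(v)|=k_i$), and observe that the hypothesis $b_j>c_{j+1}$ says exactly that level $j+1$ is larger than level $j$ by the factor $C$: indeed $c_{j+1}k_{j+1}=b_jk_j$ gives $k_{j+1}=(b_j/c_{j+1})\,k_j=C\,k_j$. I would then derive the inequality by a double-counting argument: exhibit an event of probability at least $1-\tfrac{4}{C-1}$ whose occurrence is, on the complement, witnessed by a short list of ``coincidence'' events with probabilities summing exactly to the left-hand side.

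Concretely, sample a uniformly random vertex $w\in\Gamma_{j+2}(v)$ (nonempty since $j+2\le d$) together with two independent uniformly random geodesics $P,P'$ from $v$ to $w$, written $P:v=x_0,\dots,x_{j+2}=w$ and $P':v=x_0',\dots,x_{j+2}'=w$, and fix the cut index $s$. The argument splits into two halves. First, the growth forces $P$ and $P'$ to meet only at their endpoints with high probability: since $|\Gamma_{j+1}(v)|=C\,|\Gamma_j(v)|$, the chance that $x_t=x_t'$ for some intermediate $t$ is at most $\tfrac{4}{C-1}$ --- this is the sole use of the hypothesis, and I expect it by conditioning on $P'$ and bounding, level by level, the chance that $P$ passes through the corresponding vertex of $P'$, the binding comparison being level $j$ against level $j+1$ (contributing $\asymp 1/C$), with the constant $4$ absorbing the lower levels and lower-order terms. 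Second, on the event that $P$ and $P'$ share only $v$ and $w$, the fact that they nonetheless rejoin at $w$ --- whose relevant local parameters are the out-degree $b_{j+1}$ from level $j+1$ and the in-degree $c_{j+2}$ into level $j+2$ --- is charged to a union of at most $s$ events on the $P$-side and at most $j+2-s$ events on the $P'$-side, each of probability at most $b_{j+1}/b_{t-1}$ at level $t$, together with at most $j+1$ further events each of probability at most $c_{j+2}/b_{t-1}$; summing these yields $b_{j+1}\bigl(\sum_{t=1}^{s}\tfrac1{b_{t-1}}+\sum_{t=1}^{j+2-s}\tfrac1{b_{t-1}}\bigr)+c_{j+2}\sum_{t=1}^{j+1}\tfrac1{b_{t-1}}$, i.e.\ the claimed left-hand side. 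Here the monotonicity of $b_0\ge b_1\ge\cdots$ and $c_1\le c_2\le\cdots$ is used to compare the ratios $b_{j+1}/b_{t-1}$ and $c_{j+2}/b_{t-1}$ with genuine probabilities, and the free index $s$ is simply the position where one cuts the path of length $j+2$, which is why the statement holds for every $1\le s\le j+1$. Combining the two halves gives $\mathrm{LHS}\ge 1-\tfrac{4}{C-1}$.

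The step I expect to be the main obstacle is making the level-by-level charging rigorous: pinning down which coincidence is forced at each level and proving that its probability is at most $b_{j+1}/b_{t-1}$ (resp.\ $c_{j+2}/b_{t-1}$) and not something larger, and checking that the exceptional probability on the growth side is genuinely at most $4/(C-1)$ and not merely $O(1/C)$ with an unusable constant. As a consistency check, since $i\mapsto\sum_{t=1}^{i}1/b_{t-1}$ is convex (its increments $1/b_{i-1}$ are nondecreasing), the sum $\sum_{t=1}^{s}1/b_{t-1}+\sum_{t=1}^{j+2-s}1/b_{t-1}$ is minimized at the balanced value $s\approx(j+2)/2$, so the balanced case is the strongest --- consistent with the inequality being asserted uniformly in $s$.
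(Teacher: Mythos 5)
Your proposal takes a genuinely different route from the paper, but as written it has an identifiable flaw in the first half. The paper's proof does not sample random geodesics; it works deterministically in the auxiliary graph $Y$ on $V(X)$ in which two vertices are $Y$-adjacent iff they are at $X$-distance at most $j+1$. It then applies the elementary inequality $|N(u,v)|+|N(u,w)|\le \deg(u)+|N(v,w)|$ (Observation~\ref{obs:triang-ineq}) in $Y$ to a triple $u,v,w$ with $\dist_X(v,w)=j+2$, $\dist_X(u,v)=s$, $\dist_X(u,w)=j+2-s$, and bounds the quantities $\deg_Y(u)$, $|N_Y(v,w)|$, $|N_Y(u,v)|$ via intersection-number estimates proved by a path-counting argument. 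Crucially, the hypothesis $C=b_j/c_{j+1}>1$ is used only to show that $\sum_{i\le j}k_i\le k_{j+1}/(C-1)$, i.e.\ that the inner spheres are negligible compared with $k_{j+1}$; it is \emph{not} used to control any coincidence probability for geodesics.

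The step of your plan that fails is the claim that two independent uniformly random geodesics $P,P'$ from $v$ to a uniformly random $w\in\Gamma_{j+2}(v)$ meet at an intermediate level with probability at most $4/(C-1)$. Conditionally on $w$, the level-$t$ vertex of a uniform geodesic is uniform over $\Gamma_t(v)\cap\Gamma_{j+2-t}(w)$, a set of size $p^{j+2}_{t,\,j+2-t}$, so
$\Pr[x_t=x_t'\mid w]=1/p^{j+2}_{t,\,j+2-t}$. In particular at $t=1$ this probability equals $1/c_{j+2}$; for $j=0$ it equals $1/\mu$. In the Johnson graph $J(m,d)$ one has $\mu=4$ fixed while $C=b_0/c_1=d(m-d)\to\infty$, so $\Pr[x_1=x_1']=1/4$ while $4/(C-1)\to 0$; the same happens for $H(d,m)$ with $\mu=2$. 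So the ``binding comparison'' you expect to give a contribution $\asymp 1/C$ is not governed by $C$ at all, and the constant $4$ cannot absorb the discrepancy. Since the inequality $\mathrm{LHS}+4/(C-1)\ge 1$ is what one ultimately needs, and your decomposition attributes a probability $\le 4/(C-1)$ to an event whose probability can be a fixed positive constant independent of $C$, the argument does not close. (The second half of your plan — charging the ``disjoint'' event to a union of coincidence events — is also not made precise enough to assess, but even if it could be made to work it cannot rescue the first half.) I would suggest reworking the randomness to match the paper's mechanism: sample a uniform vertex $z$ at $X$-distance at most $j+1$ from the hub $u$, and split $1\le \Pr[\dist_X(z,v)>j+1]+\Pr[\dist_X(z,w)>j+1]+\Pr[\dist_X(z,v)\le j+1\ \wedge\ \dist_X(z,w)\le j+1]$; the first two terms are then controlled by the path-counting bound that produces the $b_{j+1}/b_{t-1}$ terms, the third by the bound producing the $c_{j+2}/b_{t-1}$ terms plus the $2/(C-1)$ from the small inner spheres, and the residual $C/(C-1)$ from $\deg_Y(u)$ supplies the remaining slack.
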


Further comments on this result are in Remark~\ref{tradeoff-remark}. The proof appears in Section~\ref{sec-tradeoff} (see Theorem~\ref{b-c-ineq}).

In a distance-regular graph denote by $\lambda$ and $\mu$ the number of common neighbours of a pair of adjacent vertices and a pair of vertices at distance 2, respectively. We mention, that a result of Terwilliger \cite{ter-local}, as strengthened in \cite[Theorem 4.3.3]{BCN}, shows that any non-principal eigenvalue of a $k$-regular distance-regular graph $X$ has absolute value at most $k-\lambda$ if $\mu>1$ and $X$ is not the icosahedron. This result assures that $X$ is a spectral $\eta$-expander, if $\lambda\geq \eta k$.   We note that while both our result and Terwilliger's result provide simple sufficient combinatorial conditions for being spectral expanders, they are incomparable. In fact, our primary motivation for a spectral gap bound is an application of Lemma~\ref{mixing-lemma-tool}, where Terwilliger's gap is not sufficient. 

Additionally, we note that we do not exclude the elusive case $\mu=1$, for which almost no classification results are known, and which is known to be a difficult case in various circumstances. A remarkable example is the Bannai-Ito conjecture, where the case $\mu=1$ was the only obstacle for 30 years, and was resolved only recently in the breakthrough paper by Bang, Dubickas, Koolen and Moulton~\cite{bannai-ito}.

\subsection{Main results: Minimal degree of the automorphism group} Let $\sigma$ be a permutation of a set $\Omega$. The number of points not fixed by $\sigma$ is called the \textit{degree} of the permutation $\sigma$. Let $G$ be a permutation group on the set $\Omega$. The minimum of the degrees of non-identity elements in $G$ is called the \textit{minimal degree} of $G$. We denote this quantity by $\mindeg(G)$. The study of minimal degree goes back to Jordan \cite{Jordan} and Bochert \cite{Bochert} in 19th century. 
Lower bounds on the minimal degree have strong structural consequences on $G$. In 1934 Wielandt \cite{Wielandt} showed that a linear lower bound on the minimal degree implies a logarithmic upper bound on the \textit{thickness} of the group (the greatest $t$ for which the alternating group $A_t$ is involved as a quotient group of a subgroup of $G$ \cite{Babai-str-reg}). 
Liebeck \cite{Liebeck}, Liebeck and Saxl \cite{Liebeck-Saxl} characterized all primitive permutation groups with minimal degree less than $n/3$.

Switching from the theory of permutation groups to automorphisms of combinatorial structures, we use the following terminology \cite{RS-motion}.

\begin{definition}[Russell, Sundaram]\label{def-motion}
For a graph $X$ we use term \textit{motion} for the minimal degree of the automorphism group $\Aut(X)$, 
\begin{equation}
\motion(X) := \mindeg(\Aut(X)).
\end{equation}
\end{definition}

In 2014 Babai \cite{Babai-str-reg} proved that the motion of a strongly regular graph is linear in the number of vertices, with known exceptions.

\begin{theorem}[Babai]\label{babai-str-reg-thm}
Let $X$ be a strongly regular graph on $n\geq 29$ vertices. Then either
$$\motion(X)\geq n/8,$$
or $X$, or its complement is a Johnson graph $J(s, 2)$, a Hamming graph $H(2, s)$ or a union of cliques.
\end{theorem}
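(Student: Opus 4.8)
The plan is to combine an elementary counting estimate for automorphisms with the expander mixing lemma, reduce the statement to a single parameter inequality, and then classify the strongly regular graphs satisfying it using known structure theory. Throughout, I use that $\motion(X)$, the number $n$ of vertices, primitivity, and the stated conclusion are all invariant under complementation, so we may pass to the complement of $X$ whenever convenient.

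First I would dispose of the reducible cases. A disconnected strongly regular graph is a disjoint union of equal cliques, and a strongly regular graph whose complement is disconnected is complete multipartite, that is, the complement of a union of cliques; both lie among the listed exceptions. So assume $X$ is primitive, with parameters $(n,k,\lambda,\mu)$, $1\le\mu\le k-1$, and eigenvalues $k>r\ge 0>s$ obeying $r+s=\lambda-\mu$ and $rs=\mu-k$; being primitive, $X$ is neither complete nor bipartite, so $\xi:=\max(r,-s)<k$. The hypothesis $n\ge 29$ will be invoked only at the very end, to discard a finite list of small sporadic graphs on which the numerical estimates below are too weak.

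Second, the key combinatorial input. Let $\sigma\in\Aut(X)$, $\sigma\ne 1$; let $F$ be its fixed-point set and $C=\supp(\sigma)$ the set of moved points, with $m=|C|$. If $x\in C$ and $y=\sigma(x)\neq x$, then every fixed point $z\sim x$ satisfies $z=\sigma(z)\sim\sigma(x)=y$, so all neighbours of $x$ in $F$ are common neighbours of $x$ and $y$. Since $X$ has diameter $2$, the vertices $x$ and $y$ have exactly $\lambda$ or $\mu$ common neighbours; hence $x$ has at most $M:=\max(\lambda,\mu)$ neighbours in $F$, and summing over $x\in C$ gives $e(C,F)\le mM$. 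On the other hand, the expander mixing lemma in its refined form, $\bigl|e(S,T)-\tfrac{k}{n}|S|\,|T|\bigr|\le \xi\sqrt{|S|\,|T|\,(1-|S|/n)(1-|T|/n)}$, applied to the complementary pair $S=C$, $T=F$, gives $e(C,F)\ge\tfrac{m(n-m)}{n}(k-\xi)$. Comparing the two bounds and dividing by $m$ yields $\tfrac{n-m}{n}(k-\xi)\le M$; hence if $\motion(X)<n/8$, that is, if some $\sigma\ne 1$ has $m<n/8$, then
\begin{equation*}
\xi \;>\; k-\frac{8}{7}\max(\lambda,\mu).
\end{equation*}

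It remains --- and this is where essentially all the work lies --- to show that a primitive strongly regular graph on $n\ge 29$ vertices satisfying $\xi>k-\frac{8}{7}\max(\lambda,\mu)$ is, up to complementation, a triangular graph $J(s,2)=T(s)$ or a lattice graph $H(2,s)=L_2(s)$. I would split on the sign of $\lambda-\mu$. If $\lambda\ge\mu$ then $\xi=r$ and the inequality becomes $k-r<\tfrac{8}{7}\lambda$; expressing $r$ and $s$ through $k,\lambda,\mu$ and squaring turns this into a polynomial inequality which, together with the feasibility identity $k(k-1-\lambda)=\mu(n-1-k)$ and integrality of the eigenvalue multiplicities, forces the smallest eigenvalue $s$ to be small --- a careful parameter analysis, using Neumaier's classification of strongly regular graphs with a fixed smallest eigenvalue to rule out the geometric families when $s\le-3$, pins it down to $s=-2$. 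If $\lambda<\mu$ then $\xi=-s$, and passing to the complement (whose parameters satisfy $\lambda^{c}\ge\mu^{c}$ except in the thin borderline $\mu=\lambda+1$, handled directly) reduces this to the previous case and forces the complement to have smallest eigenvalue $-2$. In either case one invokes Seidel's classification of strongly regular graphs with smallest eigenvalue $-2$ --- the triangular graphs, the lattice graphs, the cocktail-party graphs $K_{t\times 2}$ (imprimitive, hence already excluded), and a finite list of sporadic graphs on at most $28$ vertices --- to conclude that $X$, or its complement, is $J(s,2)$ or $H(2,s)$. Finally one checks these genuinely are exceptions: a transposition of two rows in $H(2,s)=L_2(s)$ moves only $2s=O(\sqrt{n})$ vertices, and a transposition in the natural $S_s$-action on $J(s,2)=T(s)$ moves only $O(\sqrt{n})$ vertices, so their motion is sublinear. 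The main obstacle is precisely this last step: the lone inequality $\xi>k-\frac{8}{7}\max(\lambda,\mu)$ does not by itself license quoting a classification theorem, so it must be played off carefully against the feasibility and integrality constraints to reach the smallest-eigenvalue-$(-2)$ regime, and the borderline small graphs --- the very reason for the hypothesis $n\ge 29$ --- must be tracked along the way.
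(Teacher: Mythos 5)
This theorem is cited in the paper, not proved there---it is quoted from Babai's ITCS 2014 paper \cite{Babai-str-reg}, so there is no ``paper's own proof'' to compare against. What I can do is assess your proposal on its merits and against what the paper reveals about Babai's toolkit.

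The first half of your argument is sound and matches Babai's spectral tool. Your refined mixing-lemma estimate (using the extra factor $\sqrt{(1-|S|/n)(1-|T|/n)}$) applied to $S=\supp(\sigma)$, $T=\mathrm{Fix}(\sigma)$ is exactly the mechanism behind Proposition~12 of \cite{Babai-str-reg} (restated here as Lemma~\ref{mixing-lemma-tool}), and your version is in fact slightly tighter: from $\motion(X)<n/8$ you correctly extract $\xi>k-\tfrac{8}{7}\max(\lambda,\mu)$, versus the $\xi>\tfrac{7}{8}k-\max(\lambda,\mu)$ one gets from quoting Lemma~\ref{mixing-lemma-tool} verbatim.

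The second half is a genuine gap. You reduce the whole theorem to ``a primitive SRG with $\xi>k-\tfrac{8}{7}\max(\lambda,\mu)$ has smallest eigenvalue $-2$ up to complementation,'' and then wave at a ``careful parameter analysis.'' This cannot work as a single-inequality reduction: whenever $\max(\lambda,\mu)\ge\tfrac{7}{8}k$, the right-hand side $k-\tfrac{8}{7}\max(\lambda,\mu)$ is $\le 0$ and the inequality is vacuously true for every non-complete regular graph, so it carries no information about the eigenvalues at all. In exactly that regime one must fall back on the \emph{combinatorial} tool---the distinguishing-number bound $D_{\min}(X)\ge 2(k-\max(\lambda,\mu))$ together with estimates like Lemma~\ref{min-est} relating $\min(\lambda,\mu)$ to $n/k$---which your proposal never invokes. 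Babai's actual argument (as the present paper's Section~\ref{sec-tools} indicates for the higher-diameter analogue) is a \emph{case split}: the spectral tool handles small $\lambda,\mu$, the distinguishing-number tool handles large $\mu$, and a Neumaier/Metsch-type geometricity criterion handles large $\lambda$ with small $\mu$; the exceptional families fall out at the end of the geometric case. Leaning purely on the single spectral inequality, feasibility identities, integrality of multiplicities, and then Seidel's eigenvalue-$(-2)$ classification does not suffice, because the inequality simply does not constrain the graph in the large-$\lambda$ regime, which is precisely where the geometric exceptions (and more---Steiner and Latin square graphs) live.

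So: correct first step, but the heart of the proof---showing that the spectral obstruction really pins the graph down---is missing, and the strategy as written would not close without importing the combinatorial tool and a separate structural (geometricity) argument.
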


The connected strongly-regular graphs are precisely distance-regular graphs of diameter two.  For distance-regular graphs of diameter $d\geq 3$ Babai made the following conjecture.

\begin{conjecture}[Babai]\label{conj-dist-reg}

For any $d\geq 3$ there exists $\gamma_d>0$, such that for any primitive distance-regular graph $X$ of diameter $d$ with $n$ vertices either
\[ \motion(X) \geq \gamma_d n,\]
or $X$ is the Hamming graph $H(d, s)$ or the Johnson graph $J(s, d)$.
\end{conjecture}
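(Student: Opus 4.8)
The plan is to prove Conjecture~\ref{conj-dist-reg} under the two restrictions that can be removed afterwards: that the diameter is bounded by a fixed $d$, and that $X$ is non-geometric (the geometric case being settled in the companion paper). Fix $d\geq 3$. Since for any threshold $n_0(d)$ there are only finitely many distance-regular graphs of diameter $\leq d$ on at most $n_0(d)$ vertices, and each of them has positive motion, it suffices to exhibit $\gamma_d>0$ that works for all large $n$. As is standard for minimal-degree bounds, I would reduce to a single non-identity automorphism $\sigma$ of prime order $p$ with support $D$ of size $m$: if $\motion(X)=m$ is witnessed by $\tau$, a suitable power of $\tau$ has prime order and support of size $O(m)$, so it is enough to show $|D|\geq\gamma_d n$ for every prime-order $\sigma$. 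A first, crude bound already follows from the fact that a vertex fixed by $\sigma$ cannot distinguish $v$ from $\sigma(v)$: the symmetric difference $N(v)\triangle N(\sigma(v))$ is contained in $D$, and in a distance-regular graph it has size $2k$, $2k-2\lambda$ or $2k-2\mu$ according to whether $\sigma(v)$ is at distance $\geq 3$, $1$ or $2$ from $v$; hence $\motion(X)\geq 2(k-\max(\lambda,\mu))$. This settles every case in which both $k-\lambda$ and $k-\mu$ are linear in $n$, but it is useless precisely for the Hamming and Johnson graphs, where $c_2$ stays bounded and $k=o(n)$, which is exactly why the conjecture is subtle and why a different mechanism is needed.

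The heart of the argument is therefore a structural dichotomy for the remaining graphs: I would show that, for $n$ large, a non-geometric primitive distance-regular graph of diameter $\leq d$ has a $(1-\epsilon(d))$-dominant distance $t$. This is where \emph{both} the non-geometric and the bounded-diameter hypotheses are consumed: when the graph admits large ``Delsarte cliques'' -- which is governed by the low-order intersection numbers -- it is geometric by the theory of Neumaier and Metsch, so non-geometricity constrains these parameters, and one then has to show that the resulting intersection array must concentrate the vertex count at a single distance. Feeding this into Theorem~\ref{thm:main-spectral-intr}, in the sharpened form of Theorem~\ref{thm:main-spectral-gap}, the graph -- equivalently the distance-$t$ graph $X_t$, on which $\Aut(X)$ acts and which therefore satisfies $\motion(X)\geq\motion(X_t)$ -- is a spectral $\eta(d)$-expander. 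One must note that expansion alone cannot finish the proof, since the cocktail-party graphs are near-complete, hence superb expanders, yet have motion $2$; the spectral gap has to be combined with a combinatorial feature that primitivity supplies, namely that a primitive distance-regular graph of diameter $\geq 2$ has no pair of vertices with equal open or closed neighbourhoods, as that would force $\mu=k$ or $\lambda=k-1$ and hence imprimitivity.

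With expansion and twin-freeness in hand I would combine them with $\sigma$ through the mixing lemma (Lemma~\ref{mixing-lemma-tool}). The key input is again the ``all-or-none'' property: a fixed vertex is adjacent either to all or to none of each $\sigma$-orbit, so the rows of the adjacency matrix indexed by one orbit agree in every coordinate outside $D$. Running the expander mixing lemma on $D$ and on the pair $(D,\,V(X)\setminus D)$, the all-or-none constraint forces an amount of neighbourhood agreement inside $D$ that a quasirandom edge distribution of the same density cannot sustain once $|D|<\gamma_d n$, while twin-freeness quantified by the spectral gap guarantees that distinct neighbourhoods differ in enough coordinates for the count to bite; balancing these yields the required contradiction, hence $\motion(X)\geq\gamma_d n$ for every non-geometric primitive $X$ of diameter $\leq d$. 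Dropping primitivity, Smith's theorem leaves only the bipartite and antipodal cases, whose primitive or geometric quotients are covered by the above and by the companion paper, and a short analysis shows that the only genuinely new graphs surviving the twin-freeness requirement are the cocktail-party graphs, which gives the stated variant with that single extra family. Note that nothing in this scheme requires excluding $\mu=1$.

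The step I expect to be the main obstacle is the structural dichotomy of the second paragraph: turning ``non-geometric and bounded diameter'' into ``has a dominant distance'' (for large $n$) is a genuine statement about intersection arrays, and isolating the exact threshold on the low-order intersection numbers that separates the geometric regime from the dominant-distance regime, while controlling the finitely many small ``balanced'' exceptions, is delicate. A secondary difficulty is quantitative: the gap $\eta(d)$ produced by Theorem~\ref{thm:main-spectral-intr} degrades with $d$ and may be tiny, so the mixing-lemma computation must be arranged so that the final inequality only needs $\eta$ to beat an explicit function of $d$ and $\epsilon$ rather than to be close to $1$ -- in particular in the awkward sub-regime where the dominant distance is small and $\mu$ is close to $k$ -- which is presumably why Terwilliger's gap does not suffice here and the paper develops its own.
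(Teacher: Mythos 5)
Your proposal is built around a structural dichotomy that the paper neither proves nor needs, and this is a genuine gap, not just a stylistic divergence. You claim that for large $n$ a non-geometric primitive distance-regular graph of bounded diameter has a $(1-\epsilon(d))$-dominant distance, and you plan to feed this into Theorem~\ref{thm:main-spectral-intr} to get expansion, reserving geometricity for the complementary regime. The paper's actual dichotomy (Theorem~\ref{thm:main-spectral-gap}) is of a different shape: either some index $i$ has $b_i\geq\varepsilon k$ \emph{and} $c_{i+1}\geq\varepsilon k$, or $X$ is a spectral $\eta$-expander. The first alternative is precisely the \emph{absence} of a dominant distance (this is the content of the proof of Theorem~\ref{thm:main-expansion}), and it is disposed of not by geometricity but by the combinatorial tool: Proposition~\ref{primitive-distinguish} turns $b_i,c_{i+1}\geq\varepsilon k$ into $D_{\min}(X)\geq(\varepsilon/d)n$ via Babai's triangle-inequality trick (Lemma~\ref{babai-dist}), and Lemma~\ref{obs1} finishes. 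Geometricity appears only \emph{inside} the expander branch, as the residual case after a sub-split on $\lambda$ and $\mu$: large $\mu$ forces $n=O_{d,\eta}(k)$ and the large-degree argument (Proposition~\ref{prop-k-big}) applies; small $\lambda$ and small $\mu$ makes $q(X)$ small and the spectral tool (Lemma~\ref{mixing-lemma-tool}) applies; and only large $\lambda$ with small $\mu$ triggers Metsch's criterion (Corollary~\ref{geom-corol}) to give a geometric graph with bounded smallest eigenvalue. Your ``no dominant distance $\Rightarrow$ geometric (or $n$ small)'' contrapositive is therefore not what is established here, and I do not see how to establish it; what is true is that geometricity is squeezed out of a regime in which expansion \emph{already} holds.

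Two smaller points. First, the worry about cocktail-party graphs in the primitive, diameter $\geq 3$ case is misplaced: cocktail-party graphs have diameter $2$ and are antipodal, hence imprimitive; they surface only in the Section~\ref{sec:imprimitive} analysis. The ``twin-freeness'' you invoke is not a separate ingredient the paper needs; the quantitative control it gives you is already packaged in the assumption ``$q(X)$ small'' fed into Lemma~\ref{mixing-lemma-tool}. Second, passing to the distance-$t$ graph $X_t$ and running a mixing-lemma argument there is not the paper's route and is actually awkward: when $k_t\approx n$ the graph $X_t$ is nearly complete, so a pair of vertices has close to $n$ common neighbours in $X_t$, and the $q$-term in the spectral tool (applied to $X_t$) would swallow the gap. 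The paper applies the mixing lemma to $X$ itself, where the common-neighbour count is $\max(\lambda,\mu)$ and is controlled by the sub-case analysis. The prime-order reduction is likewise superfluous: Lemma~\ref{obs1} and Lemma~\ref{mixing-lemma-tool} both bound the degree of an arbitrary non-identity automorphism with no need to pass to a power of prime order.
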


We confirm this conjecture in a pair of papers. In the present paper we cover the case when $X$ is not geometric. Moreover, we prove an eigenvalue lower bound, which will be one of the main ingredients in the analysis of the remaining cases in the companion paper \cite{kivva-geometric}.

\begin{theorem}\label{thm:main-motion-intr}
For any $d\geq 3$ there exist $\gamma_d>0$ and a positive integer $m_d$, such that for any primitive distance-regular graph $X$ of diameter $d$ with $n$ vertices either
$$\motion(X)\geq \gamma_d n,$$
or $X$ is geometric with smallest eigenvalue at least $-m_d$.

\end{theorem}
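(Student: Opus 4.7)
The plan is to combine the spectral expansion result (Theorem~\ref{thm:main-spectral-intr}, via its quantitative variant Theorem~\ref{thm:main-spectral-gap}) with a mixing-type bound on automorphisms (Lemma~\ref{mixing-lemma-tool}) to obtain the linear motion bound, and to appeal to the intersection-array tradeoff of Theorem~\ref{b-c-ineq-intr} to identify the remaining cases. The overall strategy is to establish a dichotomy: either $X$ admits a $(1-\epsilon(d))$-dominant distance, so that Theorem~\ref{thm:main-spectral-intr} yields a spectral $\eta(d)$-gap which then translates into a linear motion bound; or the intersection array of $X$ is constrained enough to force $X$ to be geometric with smallest eigenvalue at least $-m_d$.

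In the dominant-distance branch, let $t$ be the dominant distance and let $\sigma\in\Aut(X)\setminus\{1\}$ be an automorphism of minimal degree with support $S$. I would apply Lemma~\ref{mixing-lemma-tool} to the distance-$t$ graph and to the pair $S$, $\sigma(S)$, whose symmetric difference is controlled because $\sigma$ permutes $S$. The mixing lemma estimates the number of distance-$t$ edges between $S$ and $\sigma(S)$ in terms of $k_t|S|^2/n$ up to a spectral error proportional to $1-\eta$, while the automorphism property enforces a combinatorial identity on this same count; the two estimates are compatible only when $|S|\geq\gamma_d n$. This is the conceptual scheme underlying Babai's proof of Theorem~\ref{babai-str-reg-thm}, now adapted to higher-diameter distance-regular graphs.

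In the non-dominant branch, I would invoke Theorem~\ref{b-c-ineq-intr}. Since $k_{i+1}/k_i = b_i/c_{i+1}$, the absence of a dominant distance bounds cumulative products of these ratios, and the tradeoff inequality, applied iteratively along the intersection array, forces each $b_j/c_{j+1}$ to be bounded in terms of $d$ and $\epsilon$. Combined with standard eigenvalue estimates for distance-regular graphs in terms of the intersection diagram, this should yield the desired upper bound on the magnitude of the smallest eigenvalue and place $X$ in the geometric exceptional class.

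The principal obstacle is the non-dominant branch: translating the combinatorial regularity extracted from Theorem~\ref{b-c-ineq-intr} into a quantitative smallest-eigenvalue bound $m_d$ that depends only on the diameter. Delicate cases, such as the $\mu=1$ configurations highlighted in the introduction, must be handled carefully, as must the variety of degenerate intersection arrays compatible with balanced sphere sizes. Once this structural reduction is in place, the spectral-gap-to-motion conversion of the dominant-distance branch is essentially the standard Babai-style application of the expander mixing lemma, and the main conceptual input then resides in Theorem~\ref{thm:main-spectral-intr} and Theorem~\ref{b-c-ineq-intr}.
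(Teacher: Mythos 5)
Your proposal has the right ingredients on the spectral-gap side but misses the key structural tool and inverts the role of the two branches of the dichotomy, so it would not close.

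The paper's dichotomy (Theorem~\ref{thm:main-spectral-gap}) is: either some index $i$ has $b_i\geq\varepsilon k$ \emph{and} $c_{i+1}\geq\varepsilon k$, or $\xi\leq k(1-\eta)$. In the first branch the paper does \emph{not} attempt to deduce anything about the smallest eigenvalue or geometricity; it invokes the combinatorial tool (Proposition~\ref{primitive-distinguish}, built on Babai's distinguishing-number inequality for primitive graphs) to show $D_{\min}(X)\geq\frac{\varepsilon}{d}n$ directly, giving linear motion. Your plan to instead extract a smallest-eigenvalue bound ``from the combinatorial regularity of Theorem~\ref{b-c-ineq-intr}'' is a misapplication: the growth-induced tradeoff is used in the paper only to manufacture a spectral gap via Lemma~\ref{eigenvalues-approximation}, and there is no mechanism by which it yields a clique geometry or a bounded smallest eigenvalue. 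You correctly flag this as ``the principal obstacle,'' but the fix is not a harder computation along these lines — the branch simply should not aim for geometricity at all.

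In the spectral-gap branch your application of Lemma~\ref{mixing-lemma-tool} is also incomplete: the lemma gives $\motion(X)\geq n(k-\xi-q)/k$ where $q=\max(\lambda,\mu)$, so a spectral gap alone is useless if $\lambda$ or $\mu$ is a constant fraction of $k$. The paper handles this by a further trichotomy: (i) if $\mu$ is $\Omega(k)$ then $n=O_d(k)$ by Lemma~\ref{min-est}, and Proposition~\ref{prop-k-big} for large-degree graphs applies; (ii) if both $\lambda,\mu$ are $o(k)$ the mixing lemma applies directly; (iii) if $\lambda$ is $\Omega(k)$ but $\mu$ is $o(k)$, Metsch's clique-geometry criterion (Corollary~\ref{geom-corol}) kicks in and \emph{this} is the source of the geometric exceptional class with smallest eigenvalue $\geq -m_d$. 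Your write-up never mentions Metsch's theorem, yet it is indispensable — it is the only place in the paper where the conclusion ``$X$ is geometric with bounded smallest eigenvalue'' actually appears, and no amount of manipulating the intersection-number recursion would substitute for it.
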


We confirm Conjecture~\ref{conj-dist-reg} for the cases not covered by the theorem above, i.e., for geometric distance-regular graphs with bounded smallest eigenvalue,  in \cite{kivva-geometric}. 

\begin{remark}
The proof of the conjecture is split between two papers\footnote{This paper significantly overlaps with the author's earlier preprint arXiv:1802.06959. The new results obtained since that posting warrant the current reorganization of the material. In particular, that preprint did not address the imprimitive case, and none of the main results of the companion paper \cite{kivva-geometric} were available.} due to the different nature of techniques and results obtained. In particular, in \cite{kivva-geometric} we obtain a classification of geometric distance-regular graphs with bounded smallest eigenvalue under certain combinatorial and spectral assumptions, but with no assumption on the motion.
\end{remark}

In Section~\ref{sec:imprimitive} we analyze imprimitive distance-regular graphs and show that if the primitivity assumption in Conjecture~\ref{conj-dist-reg} is removed then only one new family of exceptions arises. More precisely, we show that any imprimitive distance-regular graph with sublinear motion is antipodal and its (primitive) folded graph has sublinear motion as well. Note that all antipodal covers of the Hamming graphs and Johnson graphs are known \cite{antipodal-covers} (see Theorem~\ref{thm:classical-covers} below). Thus we prove that Conjecture~\ref{conj-dist-reg} will automatically extend to the imprimitive case as follows.

\begin{theorem}\label{thm:main-imprimitive-intr} Assume Conjecture~\ref{conj-dist-reg} is true. Then for any $d\geq 3$ there exists $\widetilde{\gamma}_d>0$, such that for any distance-regular graph $X$ of diameter $d$ on $n$ vertices either
$$\motion(X)\geq \widetilde{\gamma}_d n,$$
or $X$ is a Johnson graph $J(s, d)$, or a Hamming graph $H(d, s)$, or a cocktail-party graph. 
\end{theorem}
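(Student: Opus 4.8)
The plan is to reduce the imprimitive case to the primitive case, which by hypothesis (Conjecture~\ref{conj-dist-reg}) is already understood. First I would recall the structural dichotomy for imprimitive distance-regular graphs of diameter $d\geq 3$: by the classical theorem of Smith, such a graph $X$ is either bipartite or antipodal (or both). So I would split into cases. For the bipartite case, one can pass to the halved graph $\tfrac12 X$, which is distance-regular of diameter $\lfloor d/2\rfloor$; I would argue that if $X$ has sublinear motion then so does an associated smaller graph, and iterate, eventually landing on a primitive graph (or a small exceptional case) to which the conjecture applies — the conclusion being that the only surviving exceptions are the halved cubes and halved Johnson-type graphs, which when unfolded give back Hamming/Johnson graphs, or degenerate cases producing cocktail-party graphs. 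For the antipodal case, I would use the folded graph $\overline{X}$, which is distance-regular of diameter $\lfloor d/2\rfloor$ and is primitive when $X$ is not bipartite; the key claim, established in Section~\ref{sec:imprimitive} and to be cited here, is that $\motion(X)$ sublinear forces $\motion(\overline{X})$ sublinear. Applying the (assumed) conjecture to $\overline{X}$ identifies it as a Hamming or Johnson graph, and then Theorem~\ref{thm:classical-covers} (the classification of antipodal covers of Hamming and Johnson graphs) pins down $X$ itself: the covers that can have sublinear motion turn out to be exactly the cocktail-party graphs (as antipodal covers of $K_n$, i.e. of $J(n,1)$) together with the Hamming and Johnson graphs themselves (the trivial covers).

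The key steps, in order, are: (1) invoke Smith's theorem to reduce to $X$ bipartite or antipodal; (2) in each reduction, prove the motion-transfer lemma — that a sublinear-motion automorphism of $X$ descends to (or induces) a sublinear-motion automorphism of the smaller distance-regular graph, which requires controlling how the quotient/halving identifies vertices and checking that the fixed-point set does not blow up by more than a bounded multiplicative factor (here one uses that the fibers, resp. the bipartition classes, have bounded size relative to the relevant parameters, or more precisely that an automorphism fixing a fiber setwise but moving few points overall cannot move many points of the quotient); (3) run the induction on diameter down to the primitive base case and apply Conjecture~\ref{conj-dist-reg}; (4) for each graph the conjecture leaves as an exception (Hamming, Johnson), enumerate its antipodal covers via Theorem~\ref{thm:classical-covers} and its halved/bipartite-double partners, and check which ones retain sublinear motion, collecting exactly the three advertised families. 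Throughout, the constant $\widetilde{\gamma}_d$ is obtained from $\gamma_{d'}$ for $d'\leq d$ by tracking the bounded loss in each transfer step, and the finitely many small exceptional graphs of diameter $d$ (where the reductions degenerate) are absorbed into the constant.

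I expect the main obstacle to be step (2), the motion-transfer lemma, in the form needed for the bipartite/halved reduction: when we halve a bipartite graph, two vertices of $X$ in the same class become non-adjacent-but-close, and an automorphism of $X$ need not preserve the halved structure in a way that obviously bounds displaced points from below, and conversely an automorphism of the halved graph lifts to $X$ only up to a choice of swapping the two bipartition classes — so one must handle the index-two subgroup $\Aut(X)^+$ stabilizing the classes separately, and argue that if all of $\Aut(X)^+$ has sublinear motion then $X$ is forced into the exceptional list, while a class-swapping automorphism of small degree is itself very restrictive (it forces a near-perfect matching between the classes respecting adjacency, which is only possible for very structured $X$). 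A secondary difficulty is being careful that "primitive" is genuinely reached: a halved or folded graph can still be imprimitive, so the induction must be set up so that the diameter strictly drops at each step and the finitely many residual imprimitive graphs at small diameter are checked by hand. Once the transfer lemmas are in place, the rest is bookkeeping against the known classification of covers, so the real content is isolating and proving those lemmas — which is exactly the work deferred to Section~\ref{sec:imprimitive}.
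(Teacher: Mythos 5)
Your outline has the right skeleton: Smith's dichotomy, the halving/folding reductions, applying Conjecture~\ref{conj-dist-reg} after reaching a primitive graph, and invoking the covers classification (Theorem~\ref{thm:classical-covers}) to dispose of antipodal covers of the exceptions. The two reduction lemmas you call ``motion transfer'' are Propositions~\ref{prop:reduction-antip} and~\ref{prop:bipartite-reduction} in the paper, and your sketch of them is correct. However, there is a genuine gap in your treatment of the bipartite case that separates your plan from the paper's proof.

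You propose to halve once (reaching a primitive graph), apply the conjecture, and conclude. But the conjecture admits Johnson graphs $J(s,d)$ and Hamming graphs $H(d,s)$ as exceptions, so it gives no lower bound on the motion of the halved graph if the halved graph happens to be Johnson or Hamming. You would then need a separate classification argument showing that a bipartite, non-antipodal distance-regular graph of diameter $\geq 4$ never has a Johnson or Hamming halved graph (or that the corresponding $X$ is already on the exceptional list); your outline gestures at this (``halved cubes \dots which when unfolded give back Hamming/Johnson graphs'') but does not supply it, and it is not an immediate observation. The paper sidesteps the issue entirely: Theorem~\ref{thm:bipgeq4} proves \emph{unconditionally}, using the same spectral gap and distinguishing-number machinery as the primitive case, that any bipartite $X$ of diameter $\geq 4$ with primitive halved graph satisfies $\motion(X)\geq\gamma_d'n$ with no exceptions. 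This direct argument (rather than an appeal to the conjecture) is the main technical content of the imprimitive section, and it is the piece missing from your plan.

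Two smaller corrections. First, the concern you flag about lifting automorphisms from the halved graph to $X$ and the index-two subgroup $\Aut(X)^+$ is a non-issue: the reduction used (Proposition~\ref{prop:bipartite-reduction}) goes the other direction, from $X$ down to the halved graphs, and is immediate (an automorphism of $X$ either swaps the two halves, giving degree $n$, or restricts to automorphisms of both halves, at least one of which is nontrivial). Second, the ``finitely many residual imprimitive graphs at small diameter'' that you propose to check by hand are in fact infinite parameterized families --- all bipartite DRGs of diameter 3, all bipartite-antipodal DRGs of diameter 4, all antipodal DRGs of diameter 3 --- each of which requires its own proposition (Propositions~\ref{prop:bip-antip4},~\ref{prop:bip3},~\ref{prop:antip-3}) exploiting exact knowledge of the intersection array and spectrum for that family.
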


\begin{remark} We will give a proof of Conjecture~\ref{conj-dist-reg} in \cite{kivva-geometric}, so the theorem above will turn into an unconditional result.  
\end{remark}

\subsection{Graphs with bounded smallest eigenvalue}

In Theorem~\ref{thm:main-motion-intr} we show that the exceptions with sublinear motion have bounded smallest eigenvalue. A number of classification results is known under the assumption of bounded smallest eigenvalue. 

For strongly regular graphs,  Neumaier~\cite{Neumaier}  showed in 1979 that if the smallest eigenvalue is $-m$ (for $m\geq 2$), then it is a Latin square graph $LS_m(n)$, a Steiner graph $S_m(n)$, complete multipartite graph or one of finitely many other graphs. A classification of the strongly regular graphs with smallest eigenvalue $-2$ was known earlier (Seidel~\cite{Seidel}) (1968) . Moreover, in 1976 \cite{regular-classif}  Cameron,  Goethals,  Seidel and  Shult gave a complete classification of all graphs with smallest eigenvalue $-2$. They proved that all but finitely many of such graphs have rich geometric structure (they are generalized line graphs). 

Bang and Koolen \cite{Bang-Koolen-conj} proved that all but finitely many distance-regular graphs with smallest eigenvalue $-m$ and $\mu\geq 2$ are geometric. For geometric distance-regular graphs with smallest eigenvalue $\geq -3$ and $\mu \geq 2$ Bang~\cite{Bang-diam-3} and  Bang, Koolen~\cite{Bang-Koolen-diam-3} gave a complete classification.  Moreover, they conjectured {\cite[Conjecture 7.4]{Bang-Koolen-conj}} that for any integer $m$ all but finitely many geometric distance-regular graphs with smallest eigenvalue $-m$ and $\mu\geq 2$ are known.

\begin{conjecture}[Bang, Koolen]\label{conj-bang-k}
For a fixed integer $m\geq 2$, any geometric distance-regular graph with smallest eigenvalue $-m$, diameter $\geq 3$ and $\mu\geq 2$ is either a Johnson graph, or a Hamming graph, or a Grassmann graph,  or a bilinear forms graph, or the number of vertices is bounded above by a function of $m$.
\end{conjecture}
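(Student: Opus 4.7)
\textbf{Proof strategy for Conjecture~\ref{conj-bang-k}.} The plan is to pass from the distance-regular graph $X$ of diameter $d\geq 3$ with smallest eigenvalue $-m$ to the point-line incidence geometry $(V(X),\mathcal{C})$ built out of the Delsarte cliques. The geometric hypothesis says that the Delsarte cliques in $\mathcal{C}$ partition the edge set, and through every vertex there pass exactly $m$ such cliques, so each point lies on a bounded (in $m$) number of lines. The first step is to verify that this geometry is ``nice'': every pair of collinear points lies on a unique line, and, crucially, the assumption $\mu\geq 2$ forces a form of the Pasch-type axiom relating the four points of a geodesic quadrangle, so that $(V(X),\mathcal{C})$ behaves like a partial linear space of bounded order.

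Second, I would control the local invariants in terms of $m$ alone. Combining Terwilliger's inequality on $c_2$ with the eigenvalue hypothesis, one seeks bounds of the shape $a_1,c_2\leq f(m)$ on the local parameters, so that the local graph at each vertex falls into a finite list depending only on $m$. Using Neumaier's classification of strongly regular graphs with smallest eigenvalue $-m$ to describe this local graph, and an induction on the diameter $d$, one aims to reduce to the case when the underlying geometry is either a dual polar space, a Grassmann-type space, a Hamming-type net, or a product of smaller such geometries. Classical characterization theorems in the spirit of Cameron--Van Lint, Sprague, and Cuypers should then identify the global graph as one of the four listed families.

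The main obstacle, and the reason the conjecture remains open beyond the $m=3$ case of Bang and Koolen, is the intermediate regime $3\leq d\leq f(m)$ with unboundedly many vertices: here Neumaier-type boundedness arguments are insufficient, and the near-polygon axioms are not automatic. In this regime one would need a substantially new tool, perhaps a growth-type inequality tying $b_{j+1}$ and $c_{j+2}$ to clique-geometry parameters in the spirit of Theorem~\ref{b-c-ineq-intr}, to rule out putative sporadic geometries whose local graphs mimic the classical families but whose global structure is genuinely different. I would expect this step, rather than the initial reduction to incidence geometry or the final identification with a named family, to consume most of the work.
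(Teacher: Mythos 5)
This statement is a \emph{conjecture} attributed to Bang and Koolen, not a theorem of the paper, and the paper offers no proof of it. The author explicitly notes that the companion paper~\cite{kivva-geometric} confirms the conjecture only in a very special case (second largest eigenvalue close to $b_1$ and some distance $i$ dominant), which is precisely the regime relevant to the motion problem; the general conjecture remains open. So there is no ``paper's own proof'' to compare against.

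Your proposal is, accordingly, a strategy sketch for an open problem rather than a proof, and you are candid about where it breaks down. A few remarks on the sketch itself. The reduction to a partial linear space of bounded line multiplicity is sound and is indeed how geometric distance-regular graphs are usually approached; it follows directly from Definition~\ref{def-geom} and the Delsarte bound. However, the claim that $\mu\geq 2$ ``forces a form of the Pasch-type axiom'' is not automatic: $\mu\geq 2$ controls quadrangles between vertices at distance $2$, but near-polygon or Pasch-type axioms require the much stronger condition that every point has a \emph{nearest} point on every line, and there are geometric distance-regular graphs (e.g.\ Doob graphs) that are not near polygons. Similarly, bounding $a_1$ and $c_2$ purely in terms of $m$ is not known to be possible in general; what is known (Bang--Koolen) is that $\mu\geq 2$ and bounded smallest eigenvalue force bounded $c_2$ for all but finitely many graphs, and even this requires the geometricity hypothesis, not just the eigenvalue bound. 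Finally, the characterization theorems you invoke (Cameron--Van Lint, Sprague, Cuypers) apply to specific geometries such as nets, Grassmann spaces, and dual polar spaces and require additional axioms beyond bounded order; closing the gap between ``partial linear space of bounded order'' and one of these named geometries is exactly the open content of the conjecture. Your instinct that a growth-type inequality in the spirit of Theorem~\ref{b-c-ineq-intr} might help in the intermediate regime is in harmony with the author's program, but the companion paper uses it together with dominance and spectral assumptions that are not available under the conjecture's hypotheses alone.
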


In \cite{kivva-geometric} we confirm the above conjecture in the very  special case, when the second largest eigenvalue is sufficiently close to $b_1$, and some distance $i$ is dominant. We note that all families mentioned in Conjecture~\ref{conj-bang-k} have dominant distance $d$. Among these families, only the Hamming and the Johnson graphs satisfy the assumption that the second largest eigenvalue is close to $b_1$.

\subsection{The main tools to bound motion}\label{sec-tools} 

To obtain lower bounds on motion we follow the approach proposed by Babai in \cite{Babai-str-reg}. In that paper he used a combination of an old combinatorial tool \cite{Babai-annals} and a new spectral tool.  We combine his tools with a structural theorem of Metsch.

\subsubsection{Combinatorial tool}

\begin{definition}[Babai \cite{Babai-annals}]\label{def-disting}
A pair of vertices $u$ and $v$ is \textit{distinguished} by a vertex $x$ in a graph $X$ if the distances $\dist(x,u)$ and $\dist(x,v)$ in $X$ are distinct.
\end{definition}

\begin{definition}\label{def:mindistnum} In any graph define $D(u,v)$ to be the number of vertices that distinguish $u$ and $v$. Define the \textit{minimal distinguishing number} $D_{\min}(X)$ of the graph $X$ to be 
\[D_{\min}(X) = \min\limits_{u\neq v\in V} D(u, v).\]
\end{definition}

A simple observation shows that the minimal distinguishing number gives a lower bound on the motion of a graph.

\begin{lemma}\label{obs1}
Let $X$ be  a graph with $n$ vertices. If each pair of distinct vertices $u,v$ of $X$ is distinguished by at least $m$ vertices, then $\motion(X)\geq m$.
\end{lemma}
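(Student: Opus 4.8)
The plan is to exploit the single structural fact that graph automorphisms are distance-preserving, and to convert a lower bound on distinguishing numbers directly into a lower bound on the degree of every non-identity automorphism. Fix an arbitrary $\sigma \in \Aut(X)$ with $\sigma \neq \mathrm{id}$. Since $\sigma$ is not the identity, there is a vertex $u$ with $v := \sigma(u) \neq u$. I would then show that \emph{every} vertex $x$ that distinguishes $u$ and $v$ is moved by $\sigma$: indeed, if $\sigma(x) = x$, then, because $\sigma$ preserves distances, $\dist(x,u) = \dist(\sigma(x), \sigma(u)) = \dist(x, v)$, contradicting that $x$ distinguishes $u$ and $v$. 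Hence the set of vertices distinguishing $u$ and $v$, which by hypothesis has size at least $m$, is contained in the support of $\sigma$, so the degree of $\sigma$ is at least $m$.

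Since $\sigma$ was an arbitrary non-identity element of $\Aut(X)$, taking the minimum over all such $\sigma$ gives $\motion(X) = \mindeg(\Aut(X)) \geq m$, as required. There is no real obstacle here; the only point deserving a word of care is the observation that automorphisms of a graph preserve the graph metric (distances are defined from adjacency, which $\sigma$ preserves), which is standard. This lemma will presumably serve as the bridge from the combinatorial quantity $D_{\min}(X)$ to the group-theoretic quantity $\motion(X)$ throughout the rest of the paper.
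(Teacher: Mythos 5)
Your proof is correct and takes exactly the same approach as the paper: fix a non-identity automorphism $\sigma$, pick $u$ with $\sigma(u)\neq u$, and observe that no fixed point of $\sigma$ can distinguish $u$ from $\sigma(u)$ because automorphisms preserve distances. You simply spell out the distance-preservation step that the paper leaves implicit.
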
 
\begin{proof} Indeed, let $\sigma\in \Aut(X)$ be any non-trivial automorphism of $X$. Let $u$ be a vertex not fixed by $\sigma$. No fixed point of $\sigma$ distinguishes $u$ and $\sigma(u)$, so the degree of $\sigma$ is $\geq m$.
\end{proof}

%
%

\subsubsection{Spectral tool}

For a $k$-regular graph $X$ let $k = \xi_1\geq \xi_2\geq...\geq \xi_n$ denote the eigenvalues of the adjacency matrix of $X$. Following \cite{Babai-str-reg}, we call $\xi = \xi(X) = \max\{ |\xi_i|: 2\leq i\leq n\}$ the \textit{zero-weight spectral radius} of $X$.  We refer to the quantity $(k-\xi)$ as the \textit{spectral gap} of $X$. Note that $\xi(X)\leq (1-\eta)k$ if and only if $X$ is a spectral $\eta$-expander.

The next lemma gives a lower bound on the motion of a regular graph $X$ in terms of the zero-weight spectral radius and the maximum number of common neighbors for a pair of distinct vertices in $X$.

\begin{lemma}[Babai, {\cite[Proposition~12]{Babai-str-reg}}]\label{mixing-lemma-tool}
Let $X$ be a regular graph of degree $k$ on $n$ vertices with zero-weight spectral radius $\xi$. Suppose every pair of vertices in $X$ has at most $q$ common neighbors. Then 
\[\motion(X)\geq n\cdot \frac{(k-\xi-q)}{k}.\]
\end{lemma}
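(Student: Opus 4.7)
The plan is to count edges inside the support of $\sigma$ in two ways: combinatorially from below using the common-neighbor hypothesis, and spectrally from above via the expander mixing inequality.

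Fix a non-identity automorphism $\sigma \in \Aut(X)$; let $F$ denote its fixed-point set, $S := V(X) \setminus F$, and write $f := |S|$. I first observe that for any $v \in S$, every fixed neighbor $u$ of $v$ is automatically a common neighbor of $v$ and $\sigma(v)$, since $u = \sigma(u) \sim \sigma(v)$. Because $v \neq \sigma(v)$, the hypothesis yields $|N(v) \cap F| \leq q$; summing over $v \in S$ gives $e(F, S) \leq qf$. Combined with the degree identity $kf = 2\,e(S) + e(F,S)$ (where $e(S)$ is the number of edges inside $S$), this produces the combinatorial lower bound $2\,e(S) \geq (k - q)f$.

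For the spectral upper bound I expand $\mathbf{1}_S$ in an orthonormal eigenbasis of $A$: the projection onto the all-ones direction contributes $kf^2/n$ to $\mathbf{1}_S^{\top} A\, \mathbf{1}_S$, and by $|\xi_i| \leq \xi$ for $i \geq 2$ together with Parseval, the remaining components contribute at most $\xi f(1 - f/n) \leq \xi f$ in absolute value. This is the standard expander-mixing inequality, which yields $2\,e(S) = \mathbf{1}_S^{\top} A\, \mathbf{1}_S \leq k f^2/n + \xi f$. Comparing with the combinatorial bound and dividing through by $f > 0$ (valid since $\sigma$ is non-trivial) gives $k - q \leq kf/n + \xi$, i.e.\ $f \geq n(k - \xi - q)/k$, as required. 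The only step requiring real work is the expander-mixing bound; the rest is elementary double counting, so there is no serious obstacle in the argument.
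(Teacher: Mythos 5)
Your proof is correct. The paper does not reproduce Babai's argument for this lemma (it is cited from \cite{Babai-str-reg}), but your argument is exactly the standard expander-mixing double count that Babai uses, and it matches the strategy the paper itself employs for the bipartite analog (Lemma~\ref{bip-mixing-lemma-tool}): there the author isolates a single vertex of the support with few neighbors in the support via averaging, whereas you sum the bound $|N(v)\cap F|\le q$ over all $v$ in the support and divide at the end --- the two are equivalent.
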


Note that for bipartite graphs the zero-weight spectral radius equals $k$, the degree of a vertex. So Lemma~\ref{mixing-lemma-tool} cannot be applied. We prove a bipartite analog in Section~\ref{sec:bip-spectral-tool}.

\subsubsection{Structural tool}

Together with the two tools mentioned, an important ingredient of our proofs is Metsch's geometricity criteria (see Theorem~\ref{Metsch}). Intuitively, Metsch's criteria states that if the number $\lambda$ of common neighbors of a pair of adjacent vertices is much larger than the number $\mu$ of common neighbors of a pair of vertices at distance 2, then the graph has a  clique geometry (see Definition \ref{def:clique-geom}).

\subsubsection{How do we apply these tools?}  

We show that if a distance-regular graph $X$ is primitive and not a spectral expander, then the minimal distinguishing number is linear in the number of vertices. Thus we can apply the Combinatorial tool. 

Hence, we may assume that the graph $X$ is a spectral expander. If $\lambda$ and $\mu$ are small, a linear lower bound on motion follows from the Spectral tool. If $\mu$ is large, we again show that the minimal distinguishing number is linear in the number of vertices.

Finally, in the remaining case when $\lambda$ is large and $\mu$ is small, we use the Metsch criteria to deduce geometricity of $X$.

\subsection{Organization of the paper}

Basic concepts and definitions are given in Section~\ref{sec-prelim}. In Section~\ref{sec-drg-approx} we prove the spectral gap bound for distance-regular graphs with dominant color. More specifically, in Section~\ref{sec:spec-approx} we use the perturbation theory of matrices to show that under modest assumptions on the intersection numbers, the spectrum of a graph can be approximated by the intersection numbers $a_i$. In Section~\ref{sec-tradeoff} we prove a growth-induced tradeoff for the intersection numbers (Theorem~\ref{b-c-ineq-intr}). In Section~\ref{sec:spectral-gap} we use this inequality to derive our main result on spectral expansion (Theorem~\ref{thm:main-spectral-intr}).

In Section~\ref{sec-general} we analyze the motion of distance-regular graphs and prove our main result about automorphism groups (Theorem~\ref{thm:main-motion-intr}).

Finally, in Section~\ref{sec:imprimitive} we prove a bipartite analog of the Spectral tool and show that this is sufficient to prove Conjecture~\ref{conj-dist-reg}  to obtain a similar classification for imprimitive graphs (Theorem \ref{thm:main-imprimitive-intr}).  

\section*{Acknowledgments}

The author is grateful to Professor L\'aszl\'o Babai for introducing him to the problems discussed in the paper and suggesting possible ways of approaching them, his invaluable assistance in framing the results, and constant support and encouragement.

\section{Preliminaries}\label{sec-prelim}
In this section we introduce distance-regular graphs and other related definitions and concepts that will be used throughout the paper. For more about distance-regular graphs we refer the reader to the monograph \cite{BCN} and the survey  article \cite{Koolen-survey}. 

\subsection{Basic concepts and notation for graphs}

Let $X$ be a graph. We will always denote by $n$ the number of vertices of $X$ and if $X$ is regular we denote by $k$ its degree. Denote by $\lambda = \lambda(X)$ the minimum number of common neighbors for pairs of adjacent vertices in $X$. Denote by $\mu = \mu(X)$ the maximum number of common neighbors for pairs of vertices at distance 2. We will denote the diameter of $X$ by $d$. If the graph is disconnected, then its diameter is defined to be $\infty$. Denote by $q(X)$ the maximum number of common neighbors of two distinct vertices in $X$.

Let $N(v)$ be the set of neighbors of vertex $v$ in $X$ and $N_i(v) = \{ w\in X| \dist(v,w) = i\}$ be the set of vertices at distance $i$ from $v$ in the graph $X$.

Let $A$ be the adjacency matrix of $X$. Suppose that $X$ is $k$-regular. Then the all-ones vector is an eigenvector of $A$ with eigenvalue $k$. We will call them the \textit{trivial (principal) eigenvector} and the \textit{trivial (principal) eigenvalue}. All other eigenvalues of $A$ have absolute value not greater than $k$. We call them \textit{non-trivial (non-principal)}  eigenvalues.

\subsection{Distance-regular graphs}\label{sec:dist-reg-graphs}
\begin{definition}
 A connected graph $X$ of diameter $d$ is called \textit{distance-regular} if for any $0\leq i\leq d$ there exist constants $a_i, b_i, c_i$ such that for any $v\in X$ and any $w\in N_i(v)$ the number of edges between $w$ and $N_i(v)$ is $a_i$, between $w$ and $N_{i+1}(v)$ is $b_i$, and between $w$ and $N_{i-1}(v)$  is $c_i$. The sequence
 \[\iota(X) = \{b_0, b_1,\ldots, b_{d-1}; c_1, c_2,\ldots, c_d\}\]
 is called the \textit{intersection array} of $X$.
\end{definition}

 Note, that for a distance-regular graph $b_d = c_0 = 0$, $b_0 = k$, $c_1 = 1$, $\lambda = a_1$ and $\mu = c_2$. By edge counting, the following straightforward properties of the parameters of a distance-regular graph hold.
\begin{enumerate}
\item $a_i+b_i+c_i = k$ for every $0\leq i\leq d$,
\item $|N_i(v)|b_i = |N_{i+1}(v)|c_{i+1}$, \quad $\Rightarrow \quad $ $k_i := |N_i(v)|$ does not depend on vertex $v\in X$.
\item $b_{i+1} \leq b_{i}$ and $c_{i+1}\geq c_{i}$ for $0\leq i\leq d-1$.
\end{enumerate}

With any graph of diameter $d$ we can naturally associate matrices $A_{i}\in M_n(\mathbb{R})$, for which rows and columns are marked by vertices, with entries $(A_i)_{u,v} = 1$ if and only if $\dist(u,v) = i$. That is, $A_i$ is the adjacency matrix of the distance-$i$ graph $X_i$ of $X$. For a  distance-regular graph they satisfy the relations
\begin{equation}
A_0 = I,\quad A_1 =: A,\quad \sum\limits_{i=0}^d A_i = J,
\end{equation}
\begin{equation}\label{eq-rec}
 AA_i = c_{i+1}A_{i+1}+a_iA_i+b_{i-1}A_{i-1}\quad \text{for } 0\leq i\leq d,
 \end{equation}  
where $c_{d+1} = b_{-1} = 0$ and $A_{-1} = A_{d+1} = 0$. Clearly, Eq.~\eqref{eq-rec} implies that for every $0\leq i\leq d$ there exists a polynomial $\nu_i$ of degree exactly $i$, such that $A_i = \nu_i(A)$. Moreover, the minimal polynomial of $A$ has degree exactly $d+1$. Hence, since $A$ is symmetric, $A$ has exactly $d+1$ distinct real eigenvalues. Additionally, we conclude that for all $0\leq i,j,s\leq d$ there exist \textit{intersection numbers} $p_{i,j}^{s}$, such that 
\[A_iA_j = \sum\limits_{s = 0}^{d}p_{i,j}^{s} A_s.\]
The definition of $A_i$ implies that for any $u,v\in X$ with $\dist(u,v) = s$ there exist exactly $p_{i,j}^{s}$ vertices at distance $i$ from $u$ and distance $j$ from $v$, i.e., $|N_i(u)\cap N_j(v)| = p_{i,j}^{s}$.

Note, that 
\[p_{1,i}^{i-1} = b_{i-1},\quad p_{1,i}^{i} = a_i,\quad p_{1,i}^{i+1} = c_{i+1}.\] 

\begin{definition} A distance-regular graph $X$ of diameter $d$ is called \textit{primitive} if for each $i\in [d]$ the distance-$i$ graph $X_i$ of $X$ is connected. 
\end{definition}

Let $\eta$ be an eigenvalue of $A$, then Eq.~\eqref{eq-rec} implies that 
\[\eta \nu_i(\eta) = c_{i+1}\nu_{i+1}(\eta)+a_i\nu_i(\eta)+b_{i-1}\nu_{i-1}(\eta), \text{ so}\]
\[\eta u_i(\eta) = c_{i}u_{i-1}(\eta)+a_i u_i(\eta)+b_{i}u_{i+1}(\eta),\]
where $u_i(\eta) = \frac{\nu_i(\eta)}{k_i}$. Therefore, the eigenvalues of $A$ are precisely the eigenvalues of tridiagonal $(d+1)\times (d+1)$ \textit{intersection matrix} 

\[T(X) = \left(\begin{matrix} 
a_0 & b_0 & 0 & 0 & ... \\ 
c_1 & a_1 & b_1 & 0 &...\\
0& c_2 & a_2 & b_2 & ...\\
...& & \vdots & & ...\\
...& & 0& c_d & a_d  

\end{matrix}\right).\]

\subsection{Johnson and Hamming graphs}\label{sec-subsec-exceptions}

In this section we define families of graphs with huge automorphism groups. In particular, for certain range of parameters they have motion sublinear in the number of vertices.

\begin{definition}
Let $d\geq 2$ and $\Omega$ be a set of $m\geq 2d+1$ points. The \textit{Johnson graph} $J(m,d)$ is a graph on a set $V(J(m,d)) = \binom{\Omega}{d}$ of $n = \binom{m}{d}$ vertices, for which two vertices are adjacent if and only if the corresponding subsets $U_1, U_2\subseteq \Omega$ differ by exactly one element, i.e., $|U_1\setminus U_2| = |U_2\setminus U_1| = 1$.
\end{definition}

It is not hard to check that $J(m, d)$ is a distance-regular graph of diameter $d$ with intersection numbers
\[b_i = (d-i)(m-d-i) \quad \text{and} \quad c_{i+1} = (i+1)^{2}, \quad \text{for } 0\leq i<d.\] 
In particular, $J(m, d)$ is regular of degree $k = d(m-d)$ with $\lambda = m-2$ and $\mu = 4$.  The eigenvalues of $J(m,d)$ are
\[ \xi_j = (d-j)(m-d-j)-j \quad \text{with multiplicity}\quad \binom{m}{j} - \binom{m}{j-1}, \, \text{for } 0\leq j\leq d.\]

The automorphism group of $J(m,d)$ is the induced symmetric group $S_m^{(d)}$, which acts on $\binom{\Omega}{d}$ via the induced action of $S_m$ on $\Omega$. Indeed, it is clear, that $S^{(d)}_m\leq \Aut(J(m,d))$. The opposite inclusion can be derived from the Erd\H{o}s-Ko-Rado theorem.

Thus, for a fixed $d$ we get that the order is $|\Aut(J(m,d)| = m! = \Omega(\exp(n^{1/d}))$, the thickness satisfies $\theta(\Aut(J(m, d))) = m = \Omega(n^{1/d})$ and  
$$\motion(J(m,d)) = O(n^{1-1/d}).$$


\begin{definition}
Let $\Omega$ be a set of $m\geq 2$ points. The \textit{Hamming graph} $H(d, m)$ is a graph on a set $V(H(d, m)) = \Omega^{d}$ of $n = m^d$ vertices, for which a pair of vertices is adjacent if and only if the corresponding $d$-tuples $v_1, v_2$ differ in precisely one position. In other words, if the Hamming distance $d_H(v_1, v_2)$ for the corresponding tuples equals 1.
\end{definition}
Again, it is not hard to check that $H(d,m)$ is a distance-regular graph of diameter $d$ with intersection numbers
\[ b_i =(d-i)(m-1) \quad \text{and} \quad c_{i+1} = i+1, \quad \text{for } 0\leq i\leq d-1.\]
In particular, $H(d,m)$ is regular of degree $k = d(m-1)$ with $\lambda = m-2$ and $\mu = 2$. The eigenvalues of $H(d, m)$ are
\[ \xi_j = d(m-1) - jm \quad \text{with multiplicity}\quad \binom{d}{j}(m-1)^{j}, \, \text{for } 0\leq j\leq d.\]

The automorphism group of $H(d, m)$ is isomorphic to the wreath product $S_m\wr S_d$. Hence,  its order is $|\Aut(H(d, m))| = (m!)^d d!$, the thickness satisfies $\theta(H(d,m)) \geq m = n^{1/d}$ and $$\motion(H(d, m))\leq 2m^{d-1} = O(n^{1-1/d}).$$


\subsection{Geometric distance-regular graphs}\label{sec-geom}

In this section we discuss geometric distance-regular graphs. More on geometric distance-regular graphs can be found in~\cite{Koolen-survey} and \cite{delsarte-geom}.

Let $X$ be a distance-regular graph, and $\theta_{\min}$ be its smallest eigenvalue. Delsarte proved in \cite{Delsarte} that any clique $C$ in $X$ satisfies $|C|\leq 1-k/\theta_{\min}$. A clique in $X$ of size $1-k/\theta_{\min}$ is called a \textit{Delsarte clique}.
\begin{definition}\label{def-geom}
A distance-regular graph $X$ is called \textit{geometric} if there exists a collection $\mathcal{C}$ of Delsarte cliques such that every edge of $X$ lies in precisely one clique from $\mathcal{C}$.
\end{definition}

\begin{definition}\label{def:clique-geom} We say that a graph $X$ contains a \textit{clique geometry}, if there exists a collection $\mathcal{C}_0$ of maximal cliques, such that every edge is contained in precisely one clique from~$\mathcal{C}_0$. 
\end{definition}

Metsch proved that a graph $X$ under rather modest assumptions contains a clique geometry.

\begin{theorem}[Metsch {\cite[Result 2.2]{Metsch}}]\label{Metsch}
Let $\mu$, $\lambda^{(1)}, \lambda^{(2)}$ and $m$ be positive integers. Assume that $X$ is a connected graph with the following properties.
\begin{enumerate}
\item Every pair of adjacent vertices has at least $\lambda^{(1)}$ and at most $\lambda^{(2)}$ common neighbors.
\item Every pair of non-adjacent vertices has at most $\mu$ common neighbors.
\item \quad $2\lambda^{(1)}-\lambda^{(2)}>(2m-1)(\mu-1)-1$.
\item Every vertex has fewer than $(m+1)(\lambda^{(1)}+1)-\frac{1}{2}m(m+1)(\mu-1)$ neighbors.
\end{enumerate}

Define a \emph{line} to be a maximal clique $C$ satisfying $|C|\geq \lambda^{(1)}+2-(m-1)(\mu-1)$. Then every vertex is on at most $m$ lines, and every pair of adjacent vertices lies in a unique line. 

\end{theorem}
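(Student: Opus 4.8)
The plan is to build the lines greedily from local adjacency information and then verify, via double-counting common-neighbour counts, that they behave like a clique geometry. First I would fix a vertex $v$ and study the \emph{local graph} $X(v)$ induced on $N(v)$. The hypotheses say every neighbour $w$ of $v$ has between $\lambda^{(1)}$ and $\lambda^{(2)}$ neighbours inside $N(v)$ (the common neighbours of $v$ and $w$), and that any two non-adjacent vertices share at most $\mu$ common neighbours, which translates into a bound on how many common neighbours two non-adjacent vertices of $X(v)$ can have inside $N(v)$ — roughly at most $\mu-1$ once we subtract $v$ itself. The key structural claim is that in $X(v)$, any two adjacent vertices $w_1,w_2$ lie in a common clique of size at least $\lambda^{(1)}+1-(m-1)(\mu-1)$, and that $X(v)$ is covered by at most $m$ such large cliques; translating back, $v$ lies on at most $m$ lines and each edge at $v$ is in a unique line.

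The main technical step I would carry out is the following counting argument, done inside $X(v)$. Suppose $w_1 \sim w_2$ in $X(v)$. Consider the set $S$ of common neighbours of $w_1$ and $w_2$ within $N(v)$; one shows $|S| \geq 2\lambda^{(1)} - \lambda^{(2)} - 1 \cdot(\text{something})$ by inclusion–exclusion on the neighbourhoods of $w_1$ and $w_2$ inside the $\lambda^{(2)}$-bounded local degree, and hypothesis (3), $2\lambda^{(1)}-\lambda^{(2)} > (2m-1)(\mu-1)-1$, guarantees this is large — larger than $(2m-1)(\mu-1)-1$. Then I would argue that $S \cup \{w_1,w_2\}$ is forced to be ``almost'' a clique: any vertex of $S$ non-adjacent to another vertex of $S$ would create a pair of non-adjacent vertices with too many common neighbours (more than $\mu-1$ inside $N(v)$, hence more than $\mu$ in $X$ counting $v$), contradiction once the relevant subset has size exceeding $\mu-1$. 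Iterating/cleaning this up — removing the few ``bad'' vertices, of which there are at most $(m-1)(\mu-1)$ by a pigeonhole on the $\leq m$ potential cliques through $w_1$ — yields a genuine clique of the asserted size $\geq \lambda^{(1)}+2-(m-1)(\mu-1)$ through the edge $w_1w_2$, and maximality is obtained by extending it. For the bound of $m$ lines per vertex, I would count: the $\lambda^{(1)}+1$ neighbours of a fixed $w$ inside $N(v)\cup\{v\}$ are partitioned (up to overlaps of size $\leq \mu-1$) among the lines through the edge $vw$, and hypothesis (4), $k < (m+1)(\lambda^{(1)}+1) - \tfrac12 m(m+1)(\mu-1)$, is exactly what is needed to rule out $m+1$ such lines by an inclusion–exclusion lower bound on $|N(v)|$.

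Finally I would assemble uniqueness: if an edge $w_1w_2$ lay in two distinct lines $L, L'$, then $L$ and $L'$ are two maximal cliques meeting in at least the edge $w_1w_2$; picking $x \in L\setminus L'$ and $y \in L'\setminus L$, the vertices $x,y$ are non-adjacent (else $L\cup\{y\}$ would be a clique, contradicting maximality of $L$ together with the line-size lower bound), yet they share all of $L\cap L' \ni w_1,w_2$ plus enough of the large cliques to exceed $\mu$ common neighbours — contradiction. I expect the \textbf{main obstacle} to be the bookkeeping in the first counting step: carefully separating the genuinely-adjacent ``core'' of a large clique from the $O((m-1)(\mu-1))$ exceptional vertices, and making sure every inequality is applied with the correct $\pm 1$ shifts coming from whether $v$ itself is counted among common neighbours. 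Once the constants in hypotheses (3) and (4) are tracked exactly, the rest is a sequence of short extremal arguments of the type above.
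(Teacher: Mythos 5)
This theorem is cited in the paper as an external result (Metsch, Result 2.2) and the paper gives no proof of it, so there is no in-paper argument to compare your sketch against; I will assess the sketch on its own terms. You have correctly identified the Bose/Metsch flavour of the argument (build lines from common-neighbour data, use the $\mu$-bound to force near-cliques, use the degree bound (4) to limit lines through a vertex), but the quantitative steps have several gaps. The inclusion--exclusion that is supposed to give $|S|\geq 2\lambda^{(1)}-\lambda^{(2)}$ for the common neighbours of $w_1,w_2$ inside $N(v)$ does not work: $\lambda^{(2)}$ is an \emph{upper} bound on each $|N(w_i)\cap N(v)|$, and a lower bound on the intersection via inclusion--exclusion requires an upper bound on the union, which is $\deg(v)$, not $\lambda^{(2)}$. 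The quantity $2\lambda^{(1)}-\lambda^{(2)}$ and hypothesis (3) actually enter when bounding the overlap of two maximal cliques: if $u,v\in L_1\cap L_2$ then $(L_1\cup L_2)\setminus\{u,v\}\subseteq N(u)\cap N(v)$, so $|L_1\cap L_2|\geq |L_1|+|L_2|-2-\lambda^{(2)}\geq 2\lambda^{(1)}-\lambda^{(2)}+2-2(m-1)(\mu-1)>\mu$ by (3); taking $x\in L_1\setminus L_2$ and (by maximality of $L_2$) a $y\in L_2$ with $x\not\sim y$, one has $L_1\cap L_2\subseteq N(x)\cap N(y)$, contradicting (2). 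That is the route to uniqueness; your version gestures at it but does not justify that a non-adjacent pair $x,y$ can be found, nor where the count $>\mu$ comes from.

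Two further gaps. Your bound of ``at most $(m-1)(\mu-1)$ exceptional vertices by a pigeonhole on the $\leq m$ potential cliques through $w_1$'' is circular, since the $\leq m$ bound is precisely what the theorem asserts and has not yet been established. And the ``$m+1$ lines gives a contradiction with (4)'' count does not close as stated: $m+1$ lines through $v$ that pairwise meet only in $v$ and each have size $\geq\lambda^{(1)}+2-(m-1)(\mu-1)$ only force $\deg(v)\geq(m+1)(\lambda^{(1)}+1)-(m^2-1)(\mu-1)$, and $m^2-1>\tfrac12 m(m+1)$ for $m\geq 3$, so this does not contradict (4). Metsch's actual argument produces a chain of cliques through $v$ with $|L_i|\geq\lambda^{(1)}+2-(i-1)(\mu-1)$, so that the sum telescopes to exactly $(m+1)(\lambda^{(1)}+1)-\tfrac12 m(m+1)(\mu-1)$; establishing this graded lower bound on successive cliques (and that every edge at $v$ lies in one of them) is the missing core of the proof.
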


\begin{remark}\label{geom-smallest-eigenvalue} Suppose that $X$ satisfies the conditions of the previous theorem. Then the smallest eigenvalue of $X$ is at least $-m$.
\end{remark}
\begin{proof}
Let $\mathcal{C}$ be the collection of lines of $X$. Consider $|V|\times |\mathcal{C}|$ vertex-clique incidence matrix $N$. That is, $(N)_{v,C} = 1$ if and only if $v\in C$ for $v\in X$ and $C\in \mathcal{C}$. Since every edge belongs to exactly one line, we get $NN^{T} = A+D$, where $A$ is the adjacency matrix of $X$ and $D$ is a diagonal matrix. Moreover, $(D)_{v,v}$ equals to the number of lines that contain $v$. By the previous theorem, $D_{v,v} \leq m$ for every $v\in X$. Thus, 
\[A+mI = NN^{T}+(mI-D)\] 
is positive semidefinite, so all eigenvalues of $A$ are at least $-m$.
\end{proof}
%

We will use the following sufficient conditions of geometricity.
\begin{proposition}[{\cite[Proposition 9.8]{Koolen-survey}}]\label{suff-cond}
Let $X$ be a distance-regular graph of diameter $d$. Assume there exist a positive integer $m$ and a clique geometry $\mathcal{C}$ of $X$ such that  every vertex $u$ is contained in exactly $m$ cliques of $\mathcal{C}$. If $|\mathcal{C}|<|V(X)| = n$, then $X$ is geometric with smallest eigenvalue $-m$. Moreover, $|\mathcal{C}|<n$ holds if $\min\{|C| : C\in \mathcal{C}\}>m$.  
\end{proposition}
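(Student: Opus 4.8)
The plan is to exploit the vertex--clique incidence matrix, essentially as in the proof of Remark~\ref{geom-smallest-eigenvalue}. Let $N$ be the $n\times|\mathcal{C}|$ real matrix with $N_{v,C}=1$ iff $v\in C$. First I would record the identity $NN^{T}=A+mI$: the diagonal entry at $v$ is the number of cliques of $\mathcal{C}$ through $v$, which is $m$ by hypothesis; the entry at a pair of adjacent vertices $u,v$ counts the cliques of $\mathcal{C}$ containing both, i.e. the cliques containing the edge $uv$, which is $1$ since $\mathcal{C}$ is a clique geometry; and it is $0$ for non-adjacent pairs, since a clique containing $u$ and $v$ would make them adjacent. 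In particular $A+mI=NN^{T}$ is positive semidefinite, so $\theta_{\min}(X)\geq -m$.

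Next I would bring in the hypothesis $|\mathcal{C}|<n$. Since $\operatorname{rank}(NN^{T})=\operatorname{rank}(N)\leq|\mathcal{C}|<n$, the matrix $A+mI$ is singular, so $-m$ is an eigenvalue of $A$; combined with the previous step this forces $\theta_{\min}(X)=-m$. By Delsarte's clique bound \cite{Delsarte} every clique $C$ of $X$ then satisfies $|C|\leq 1-k/\theta_{\min}=1+k/m$.

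The key step is to upgrade this to equality for every $C\in\mathcal{C}$. Fix a vertex $v$. The $m$ cliques of $\mathcal{C}$ through $v$ pairwise meet only in $v$ (if two of them shared a second vertex $w$, the edge $vw$ would lie in two cliques of $\mathcal{C}$), and every neighbour of $v$ lies in exactly one of them (the edge joining it to $v$ lies in a unique clique of $\mathcal{C}$); hence $N(v)$ is partitioned by the sets $C\setminus\{v\}$ with $C\ni v$, giving $\sum_{C\ni v}(|C|-1)=k$. Each of the $m$ summands is at most $k/m$ by the Delsarte bound, so all of them equal $k/m$, i.e. $|C|=1+k/m=1-k/\theta_{\min}$ for every $C\in\mathcal{C}$. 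Thus $\mathcal{C}$ is a family of Delsarte cliques covering each edge exactly once, which is precisely the assertion that $X$ is geometric, with smallest eigenvalue $-m$.

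For the last sentence of the proposition, I would count vertex--clique incidences: $\sum_{C\in\mathcal{C}}|C|=mn$ since each vertex lies in exactly $m$ cliques, so if $\min_{C}|C|>m$ then $m\,|\mathcal{C}|<\big(\min_{C}|C|\big)|\mathcal{C}|\leq mn$, i.e. $|\mathcal{C}|<n$, and the hypothesis of the first part is met. The argument is entirely elementary; the only point that needs care is the first half, where the strict inequality $|\mathcal{C}|<n$ is exactly what converts the a priori bound $\theta_{\min}\geq -m$ into the equality $\theta_{\min}=-m$ via a rank count, after which the tightness of Delsarte's bound for all cliques of $\mathcal{C}$ drops out of the one-line averaging argument above.
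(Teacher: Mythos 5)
Your proof is correct, and the paper itself gives no proof of this proposition (it only cites Proposition~9.8 of \cite{Koolen-survey}); the ideas you use — the vertex--clique incidence matrix identity $NN^{T}=A+mI$, the rank count to show $-m$ is actually attained, and the averaging over the $m$ cliques through a vertex to force equality in Delsarte's bound — are exactly the standard argument, and the first ingredient is the same device the paper uses in Remark~\ref{geom-smallest-eigenvalue}. One small remark: in the partition step it is worth being explicit that since every clique $C\in\mathcal{C}$ contains at least one vertex, running the averaging argument at any one vertex of $C$ already shows $|C|=1+k/m$, so \emph{all} members of $\mathcal{C}$ are Delsarte cliques, not merely those through a fixed $v$; you implicitly do this, but it is the one place a reader might pause.
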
 

\begin{corollary}[{\cite[Proposition 9.9, restated]{Koolen-survey}}]\label{geom-corol}
Let $m\geq 2$ be an integer, and let $X$ be a distance-regular graph  with $(m-1)(\lambda+1)<k\leq m(\lambda+1)$ and diameter $d\geq 2$. If $\lambda\geq \frac{1}{2}m(m+1)\mu$, then $X$ is geometric with smallest eigenvalue $-m$.
\end{corollary}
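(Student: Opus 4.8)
The plan is to deduce this as a consequence of Proposition~\ref{suff-cond}, and for that I first need a clique geometry in which every vertex lies on exactly $m$ cliques. The natural source of such a geometry is Metsch's Theorem~\ref{Metsch}, so the main task is to verify that the hypotheses $(m-1)(\lambda+1)<k\le m(\lambda+1)$ and $\lambda\ge \tfrac12 m(m+1)\mu$ force conditions (1)--(4) of that theorem for a suitable choice of parameters. Since $X$ is distance-regular, every pair of adjacent vertices has exactly $\lambda$ common neighbours and every pair of vertices at distance $2$ has exactly $\mu$ common neighbours; vertices at distance $\ge 3$ are non-adjacent with $0$ common neighbours. Hence I set $\lambda^{(1)}=\lambda^{(2)}=\lambda$ and use $\mu$ for the bound in (2). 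Then (1) and (2) hold trivially, and (3) becomes $\lambda>(2m-1)(\mu-1)-1$, while (4) becomes $k<(m+1)(\lambda+1)-\tfrac12 m(m+1)(\mu-1)$.

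Next I would check these two numerical inequalities from the hypotheses. For (4): since $k\le m(\lambda+1)$, it suffices that $m(\lambda+1)<(m+1)(\lambda+1)-\tfrac12 m(m+1)(\mu-1)$, i.e. $\tfrac12 m(m+1)(\mu-1)<\lambda+1$; and this follows from $\lambda\ge \tfrac12 m(m+1)\mu\ge \tfrac12 m(m+1)(\mu-1)$ (the case $\mu=0$, i.e. $X$ a union of cliques, is excluded since $d\ge 2$ and $X$ is connected, so $\mu\ge 1$). For (3): from $\lambda\ge \tfrac12 m(m+1)\mu$ and $m\ge 2$ we get $\lambda\ge 3\mu\ge (2m-1)(\mu-1)$ when $\mu\ge 1$ — more carefully, $\tfrac12 m(m+1)\ge 2m-1$ for $m\ge 2$, so $\lambda\ge \tfrac12 m(m+1)\mu\ge(2m-1)\mu>(2m-1)(\mu-1)-1$. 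Thus all four hypotheses of Theorem~\ref{Metsch} hold, and it yields a clique geometry (a family of lines) in which every vertex lies on at most $m$ lines and every edge lies on a unique line.

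It then remains to upgrade "at most $m$ lines per vertex" to "exactly $m$" and to apply Proposition~\ref{suff-cond}. Counting edges at a fixed vertex $v$: the $k$ edges at $v$ are partitioned among the lines through $v$, and each line through $v$ has size at most $1-k/\theta_{\min}\le \lambda+2$ by the Delsarte bound together with the standard fact that a line meeting $v$ contributes at most $\lambda+1$ further neighbours of $v$ (two adjacent vertices in a common clique have their other $\le\lambda$ common neighbours forced into that clique once $\lambda^{(1)}=\lambda^{(2)}$). Hence the number of lines through $v$ is at least $k/(\lambda+1)>m-1$, so it equals exactly $m$. Since each line has size at least $\lambda^{(1)}+2-(m-1)(\mu-1)=\lambda+2-(m-1)(\mu-1)$, and this exceeds $m$ precisely because $\lambda\ge\tfrac12 m(m+1)\mu$ dominates $(m-1)(\mu-1)+m$, Proposition~\ref{suff-cond} applies and gives that $X$ is geometric with smallest eigenvalue $-m$.

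The step I expect to be the main obstacle is the bookkeeping in the last paragraph: pinning down that every line through $v$ accounts for at most $\lambda+1$ neighbours of $v$, so that the count of lines through $v$ is forced to be exactly $m$, and simultaneously checking $\min\{|C|:C\in\mathcal C\}>m$ so that the "moreover" clause of Proposition~\ref{suff-cond} delivers $|\mathcal C|<n$. Both reduce to the single arithmetic inequality $\lambda+2-(m-1)(\mu-1)>m$, which is comfortably implied by $\lambda\ge\tfrac12 m(m+1)\mu$; but one must be careful about the degenerate regimes ($\mu=1$, or $m$ small) where the slack is tightest.
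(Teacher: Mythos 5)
Your proof is correct and takes essentially the same approach the paper intends (the paper's proof is simply ``Directly follows from Theorem~\ref{Metsch} and Proposition~\ref{suff-cond}''), correctly verifying Metsch's conditions with $\lambda^{(1)}=\lambda^{(2)}=\lambda$, and then upgrading ``at most $m$ lines per vertex'' to ``exactly $m$'' using $(m-1)(\lambda+1)<k$ together with $|C|\leq\lambda+2$. Two small expository blemishes, neither affecting validity: invoking the Delsarte bound $1-k/\theta_{\min}$ for the line-size upper bound is circular at that stage (you do not yet know $\theta_{\min}$), though your parenthetical already supplies the correct independent reason ($C\setminus\{v,w\}$ is a set of common neighbours of the adjacent pair $v,w\in C$, so $|C|\leq\lambda+2$); and your final paragraph misattributes the ``exactly $m$'' step to the inequality $\lambda+2-(m-1)(\mu-1)>m$, whereas it actually comes from $(m-1)(\lambda+1)<k$ plus the upper bound $|C|\leq\lambda+2$, while the inequality $\lambda+2-(m-1)(\mu-1)>m$ is only needed for the ``moreover'' clause $\min\{|C|\}>m$ of Proposition~\ref{suff-cond}.
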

\begin{proof}
Directly follows from Theorem \ref{Metsch} and Proposition \ref{suff-cond}.
\end{proof}

\subsection{Imprimitive distance-regular graphs}

Here we shortly describe some basic properties of imprimitive distance-regular graphs that we will need later. Recall that a distance-regular graph $X$ is imprimitive if for some $1\leq i\leq d$ the distance-$i$ graph $X_i$ is disconnected. Smith's theorem states that there are only two types of imprimitive distance-regular graphs.

\begin{definition} A distance-regular graph $X$ of diameter $d$ is called \textit{antipodal} if being at distance $d$ in $X$ is an equivalence relation, namely if $X_d$ is a disjoint union of cliques.
\end{definition}

\begin{theorem}[D. H. Smith \cite{Smith}] An imprimitive distance-regular graph of degree $k>2$ is bipartite or antipodal (or both).
\end{theorem}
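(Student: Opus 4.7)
The plan is to exploit the polynomial structure $A_j = \nu_j(A)$ together with Perron--Frobenius applied to each distance matrix $A_j$. Since $X_1 = X$ is connected by assumption, the hypothesis of imprimitivity means that $X_r$ is disconnected for some $2 \leq r \leq d$; via $A_r = \nu_r(A)$ this translates to the statement that the Perron eigenvalue $k_r$ of $A_r$ has multiplicity at least two, i.e., there exists a non-principal eigenvalue $\theta$ of $A$ with $\nu_r(\theta) = k_r$.

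I would then introduce the standard sequence $u_j := \nu_j(\theta)/k_j$ for $0 \leq j \leq d$. It satisfies $u_0 = u_r = 1$, the three-term recurrence inherited from~\eqref{eq-rec}, and the bound $|u_j| \leq 1$ by Perron--Frobenius applied to each $A_j$. The key identity comes from applying the matrix equation $A_i A_j = \sum_h p_{ij}^h A_h$ to a $\theta$-eigenvector:
\[
 k_i k_j\, u_i u_j \;=\; \sum_h p_{ij}^h\, k_h\, u_h, \qquad \text{while} \qquad \sum_h p_{ij}^h\, k_h = k_i k_j.
\]
Combined with $|u_h| \leq 1$, this forces $u_h = u_i u_j$ for every $h$ with $p_{ij}^h > 0$, whenever $u_i, u_j \in \{\pm 1\}$. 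Consequently the set $I := \{j : u_j \in \{\pm 1\}\}$ is closed under combining distances via nonzero $p_{ij}^h$, and the sign map $j \mapsto u_j$ is multiplicative on $I$.

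Next I would classify $I$. Let $r_0 \geq 1$ be the smallest positive element of $I$. If $r_0 = d$, then closure of $\{0,d\}$ under the operation above yields $I = \{0,d\}$ exactly when $p_{d,d}^h = 0$ for all $0 < h < d$, which is precisely the antipodal condition ($X_d$ a disjoint union of cliques). If instead $r_0 < d$, closure under $p_{r_0, r_0}^h$ forces additional indices into $I$; iterating, $I$ grows to contain an arithmetic-progression-like subset of $\{0,\dots,d\}$. A recurrence analysis then pins down $r_0 = 2$ with $u_j = (-1)^j$ for all $j$, upon which substitution into the three-term recursion gives $\theta = 2a_j - k$ independent of $j$, forcing $a_j \equiv 0$ and $\theta = -k$: exactly the bipartite condition.

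The main obstacle is the second case, $r_0 < d$: controlling how closure propagates and ruling out mixed closed subsets (such as $\{0, r_0\}$ with $1 < r_0 < d$) that would produce neither a bipartite nor an antipodal conclusion. The hypothesis $k > 2$ enters here to exclude the cycles $C_n$, which are the only distance-regular graphs of degree $2$ and for which bipartite and antipodal features blend through degenerate closed subsets not captured by the general classification. Once that classification is in hand, Smith's dichotomy follows immediately.
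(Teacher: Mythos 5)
The paper cites Smith's theorem without giving a proof (it is a classical result invoked as background), so there is no internal argument to compare against; I therefore review your proposal on its own merits. Your setup is sound and follows a recognized algebraic route to the result, distinct from Smith's original combinatorial argument via imprimitivity blocks: since $X_r$ is disconnected, $k_r$ is a repeated eigenvalue of $A_r=\nu_r(A)$, yielding a non-principal $\theta$ with $u_r(\theta)=1$; the bound $|u_j|\leq 1$ via Perron--Frobenius on each $A_j$, the identity $k_ik_j\,u_iu_j=\sum_h p_{ij}^h k_h u_h$ together with $\sum_h p_{ij}^h k_h=k_ik_j$, and the resulting closure property of $I=\{j:u_j\in\{\pm1\}\}$ are all correct.

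However, the classification of $I$ is the entire content of the theorem, and you explicitly leave it as an obstacle rather than resolving it. The assertion that $r_0<d$ forces $r_0=2$ with $u_j=(-1)^j$ is neither argued nor immediate. For example, taking $u_2=+1$ and using the recurrence at index $1$ gives $u_1(\theta-a_1)=k-a_1$, hence $\theta=a_1-k$ and $u_1=(a_1-k)/k$, which lies strictly between $-1$ and $1$ whenever $a_1>0$; the recurrence alone therefore does not produce $u_1=-1$, and one must separately deduce $a_1=0$ and the full sign pattern from closure and monotonicity of the $b_i,c_i$. Likewise, ruling out intermediate closed sets such as $\{0,r_0\}$ with $1<r_0<d$, or arithmetic-progression sets when $r_0\nmid d$, and proving that the only admissible choices of $I$ beyond $\{0\}$ are $\{0,d\}$ (antipodal) and $\{0,1,\ldots,d\}$ with $u_j=(-1)^j$ (bipartite), is precisely where the real work and the hypothesis $k>2$ must enter concretely --- indeed $C_9$ has $X_3$ disconnected yet is neither bipartite nor antipodal, so the degree hypothesis is essential and needs a precise location in the argument. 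Until that classification is carried out, the proof is incomplete.
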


If $X$ is a bipartite graph, then $X_2$ has two connected components $X^{+}$ and $X^{-}$, which are called the \textit{halved graphs} of $X$ and are denoted $\frac{1}{2}X$. 

For an antipodal graph $X$ of diameter $d$, define the graph $\widetilde{X}$ which has the equivalence classes of $X_d$ as vertices and two equivalence classes are adjacent if they contain adjacent vertices. The graph $\widetilde{X}$ is called the \textit{folded graph} of $X$. 

In the next proposition we list some information about the intersection numbers of halved and folded graphs, which we will need later.

\begin{proposition}[Biggs, Gardiner \cite{biggs-gardiner}, see {\cite[Proposition 4.2.2]{BCN}}]\label{prop:impriv-intersection} Let $X$ be a distance-regular graph with intersection array $\iota(X) = \{b_0, b_1, \ldots, b_{d-1}; c_1, c_2, \ldots, c_d\}$ and diameter $d\in \{2t, 2t+1\}$. 
\begin{enumerate}
\item The graph $X$ is bipartite if and only if $b_i+c_i = k$ for $i=0, 1, \ldots, d$. In this case halved graphs are distance-regular of diameter $t$ with intersection array
\[ \iota(X^{\pm}) = \left\{\frac{b_0b_1}{\mu}, \frac{b_2b_3}{\mu}, \ldots, \frac{b_{2t-2}b_{2t-1}}{\mu}; \frac{c_1c_2}{\mu}, \frac{c_3c_4}{\mu}, \ldots, \frac{c_{2t-1}c_{2t}}{\mu}\right\}. \]
\item The graph $X$ is antipodal if and only if $b_i = c_{d-i}$ for $i\neq t$. In this case $X$ is an antipodal $r$-cover of its folded graph $\widetilde{X}$, where $r = 1+b_t/c_{d-t}$. If $d>2$, then $\widetilde{X}$ is distance-regular of diameter $t$ with intersection array
\[ \iota(\widetilde{X}) = \{b_0, b_1, \ldots b_{t-1}; c_1, c_2, \ldots, c_{t-1}, \gamma c_t\},\]
where $\gamma = r$, if $d=2t$; and $\gamma = 1$, if $d = 2t+1$. 
\end{enumerate}
\end{proposition}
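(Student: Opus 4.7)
The proposition has two independent parts, and I would handle them separately. Part (1) reduces to a parity argument and a double count of 2-paths. Part (2) rests on the fact that in an antipodal graph the natural projection $\pi : X \to \widetilde{X}$ induces a rigid correspondence between $X$-spheres and $\widetilde{X}$-spheres of the same small radius.

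For Part (1), I would first establish the three-way equivalence: $X$ is bipartite iff $a_i = 0$ for all $i$ iff $b_i + c_i = k$ for all $i$ (the last equivalence from $a_i + b_i + c_i = k$). Bipartite implies $a_i = 0$ because an edge between two vertices in a common sphere $N_i(v)$ together with shortest paths to $v$ would form a $(2i+1)$-cycle; conversely, if all $a_i = 0$ then the parity of distance to a fixed basepoint is a proper 2-coloring. For the halved graph $X^\pm$, fix $v \in X^+$ and note that bipartiteness gives $d_{X^\pm}(v, u) = d_X(v, u)/2$, identifying $N_i^{X^\pm}(v)$ with $N_{2i}^X(v)$. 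The number of edges in $X^\pm$ from $u \in N_{2i}^X(v)$ to $N_{2i+2}^X(v)$ is obtained by double-counting 2-paths $u \to x \to w$ in $X$ with $x \in N_{2i+1}^X(v)$: from $u$ there are $b_{2i} b_{2i+1}$ such paths, and each endpoint $w$ (at $X$-distance $2$ from $u$) is reached by $c_2 = \mu$ of them, giving $b_i(X^\pm) = b_{2i} b_{2i+1}/\mu$; the mirror count yields $c_i(X^\pm) = c_{2i-1} c_{2i}/\mu$.

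For Part (2), the key structural fact is that for every $v \in X$ and every $i < t$, the map $\pi$ restricts to a bijection $N_i^X(v) \to N_i^{\widetilde{X}}(\pi(v))$, and for any $u \in N_i^X(v)$ the $r - 1$ other members of its antipodal class $\mathcal{A}_u$ all lie in $N_{d-i}^X(v)$. Injectivity on $N_i^X(v)$ follows because two vertices there have mutual $X$-distance at most $2i < d$ and hence are not antipodal; the distribution of $\mathcal{A}_u$ across $\{N_i^X(v), N_{d-i}^X(v)\}$ follows because any $u' \in \mathcal{A}_u$ has $d(u', v) \geq d - i$ by the triangle inequality while $\pi$-fibers lie in $N_i^X(v) \cup N_{d-i}^X(v)$. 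From this symmetry, $b_i$ at $u$ and $c_{d-i}$ at any $u' \in \mathcal{A}_u \cap N_{d-i}^X(v)$ count corresponding neighbors under the fiber-wise pairing, yielding $b_i = c_{d-i}$ for $i \neq t$. Conversely, the intersection-array identity $b_i = c_{d-i}$ for $i \neq t$ forces $p_{d,d}^0 + p_{d,d}^d = k_d$ by a direct recursion in $p_{d,d}^s$ using \eqref{eq-rec}, so $X_d$ is a disjoint union of cliques of size $r = 1 + b_t / c_{d-t}$.

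For the folded graph's intersection array, the bijections $N_i^X(v) \leftrightarrow N_i^{\widetilde{X}}(\pi(v))$ for $i < t$ transport the parameters $b_i, c_i$ to $\widetilde{X}$ unchanged. The only modification happens at radius $t$. When $d = 2t+1$, any two vertices in $N_{t-1}^X(v) \cup N_t^X(v)$ have mutual $X$-distance at most $2t = d - 1 < d$, so $\pi$ is injective on $N_t^X(v)$ and a direct count gives $c_t(\widetilde{X}) = c_t$. When $d = 2t$, a vertex $u \in \pi^{-1}(N_t^{\widetilde{X}}(\pi(v)))$ has its $c_t$ $X$-neighbors in $N_{t-1}^X(v)$ and its $b_t$ $X$-neighbors in $N_{t+1}^X(v)$ all projecting into $N_{t-1}^{\widetilde{X}}(\pi(v))$, with all these projections distinct for $d \geq 4$, giving $c_t(\widetilde{X}) = c_t + b_t = r c_t$ after using $b_t = (r - 1) c_t$. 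The main obstacle throughout is verifying the bijective correspondence between antipodal fibers and spheres in Part (2); once this structural lemma is in hand, both the intersection-array identity $b_i = c_{d-i}$ and the folded graph's array drop out by elementary counting, while the bipartite part presents no difficulty beyond the parity observation and a single double count.
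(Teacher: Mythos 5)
The paper states this result as Proposition~\ref{prop:impriv-intersection} and cites it (Biggs--Gardiner; see \cite[Proposition~4.2.2]{BCN}) rather than proving it, so there is no in-paper argument to compare against; I therefore evaluate your proof on its own merits. Part~(1) is essentially correct: the equivalence of bipartiteness with $a_i=0$ for all $i$, the identification $N_i^{X^\pm}(v)=N_{2i}^X(v)$ via parity of geodesics, and the double count of $2$-paths through $N_{2i+1}(v)$ all go through. The hidden point that makes dividing by $\mu$ legitimate --- that \emph{every} common $X$-neighbour of two vertices at distances $2i$ and $2i+2$ from $v$ must lie in $N_{2i+1}(v)$ --- is indeed forced by parity, so the clean relation $b_i(X^\pm)=b_{2i}b_{2i+1}/\mu$ holds.

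Part~(2) has a genuine gap at exactly the step you flag as ``the main obstacle''. The structural lemma you need is: if $u\in N_i(v)$ with $0<i<d$ and $u'\in\mathcal{A}_u\setminus\{u\}$, then $\dist(v,u')=d-i$. Your justification is that $\dist(v,u')\ge d-i$ by the triangle inequality ``while $\pi$-fibers lie in $N_i^X(v)\cup N_{d-i}^X(v)$'' --- but that last assertion is precisely the lemma to be proved, so the argument is circular. The triangle inequality alone gives only $\dist(v,u')\ge d-i$; the case $\dist(v,u')=d$ is easy to exclude (it would force $\mathcal{A}_u=\mathcal{A}_v$ and hence $u\in\mathcal{A}_v$), but ruling out the intermediate values $d-i<\dist(v,u')<d$ requires a real argument, and that is where most of the work in the standard (BCN) proof lives --- typically via an intersection-number computation or an induction along geodesics rather than the soft geometric reasoning you sketch. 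Everything downstream of the lemma --- the fiber-wise pairing giving $b_i=c_{d-i}$, the converse via $p_{d,d}^s=0$ for $0<s<d$, the modification $\gamma c_t$ at radius $t$, and the identity $b_t=(r-1)c_t$ for even $d$ --- is sound once the lemma is available. The missing piece is precisely that structural lemma, and it is not a detail: it is the heart of the forward implication.
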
 

It is not hard to show that given a distance-regular graph of degree $k>2$ one may obtain a primitive distance-regular graph after halving at most once and folding at most once. More precisely, the following is true.

\begin{proposition}[see {\cite[Sec. 4.2.A]{BCN}}]\label{prop:antip-prelim} Let $X$ be a distance-regular graph of degree $k>2$.
\begin{enumerate}
\item If $X$ is a bipartite graph, then its halved graph is not bipartite.
\item If $X$ is bipartite and either has odd diameter or is not antipodal, then its halved graph is primitive.
\item If $X$ is antipodal and either has odd diameter or is not bipartite, then its folded graph is primitive.
\item If $X$ has even diameter and is both antipodal and bipartite, then the halved graphs $\frac{1}{2}X$ are antipodal, the folded graph $\widetilde{X}$ is bipartite and the graphs $\widetilde{\frac{1}{2}X}\cong \frac{1}{2}\widetilde{X}$ are primitive.
\end{enumerate}
\end{proposition}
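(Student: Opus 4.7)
Part (1) has a direct combinatorial proof. I pick any vertex $v$ of the halved graph $X^+$ and any neighbour $x$ of $v$ in $X$. Since $X$ is bipartite, $x$ lies in the opposite half. Because $k>2$, the vertex $x$ has at least two further neighbours $u_1,u_2$ in $X$ distinct from $v$, and both lie in $X^+$. Then $v,u_1,u_2$ pairwise share $x$ as a common neighbour, so each pair is at distance exactly $2$ in $X$ and hence adjacent in $X^+$. Thus $\{v,u_1,u_2\}$ induces a triangle in $X^+$, so $X^+$ cannot be bipartite.

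For parts (2) and (3) I would invoke Smith's theorem to reduce the question to ruling out the two possible imprimitive types for the halved (resp.\ folded) graph. In (2), the halved graph $X^+$ is not bipartite by~(1), so an imprimitive $X^+$ must be antipodal. Translating antipodality of $X^+$ through the formulas $b_i^+=b_{2i}b_{2i+1}/\mu$ and $c_{i+1}^+=c_{2i+1}c_{2i+2}/\mu$ from Proposition~\ref{prop:impriv-intersection}, and combining with the bipartite relation $b_j+c_j=k$ in $X$, yields $b_j=c_{d-j}$ in the appropriate range of $j$, i.e.\ antipodality of $X$. A short separate argument comparing the cover multiplicity of $X^+$ to the number of vertices at distance $d$ from a fixed vertex in $X$ then forces $d$ to be even. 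Part (3) is dual: if $\widetilde{X}$ is imprimitive, Smith gives bipartite or antipodal; using $\iota(\widetilde{X})=\{b_0,\ldots,b_{t-1};c_1,\ldots,c_{t-1},\gamma c_t\}$ together with the antipodality of $X$ (i.e.\ $b_j=c_{d-j}$ for $j\neq t$), one checks in each of the two cases and for each parity of $d$ that $X$ must also satisfy $b_j+c_j=k$ for all $j$, hence be bipartite, and that $d$ must be even.

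Part (4) then follows by iterating (1)--(3) applied to $X$, to $\widetilde{X}$, and to $\frac{1}{2}X$. The isomorphism $\widetilde{\frac{1}{2}X}\cong\frac{1}{2}\widetilde{X}$ comes from an explicit bijection of vertices: both sides can be described as the equivalence classes of $V(X)$ modulo the join of the antipodal equivalence (distance $d$ in $X$) and the ``same-half'' equivalence (even distance in $X$), which commute because they are equivalence relations on the same set whose joins are independent of order.

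The main obstacle, in my view, is the bookkeeping in (2) and (3): specifically, the handling of the middle index $\lfloor t/2\rfloor$ of the halved-graph intersection array, where the antipodality condition $b_i^+=c_{t-i}^+$ is allowed an exception and must be recovered separately when pulling back to a relation on the intersection array of $X$; and the case split on the parity of $d$, since $\gamma$ in $\iota(\widetilde{X})$ and the index range in $\iota(X^\pm)$ both depend on it. Once these translations are set up cleanly, both parts reduce to short manipulations of the formulas in Proposition~\ref{prop:impriv-intersection}.
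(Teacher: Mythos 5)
The paper cites this proposition from \cite[Sec.~4.2.A]{BCN} without supplying a proof, so there is no in-paper argument to compare against; I will assess the proposal on its own terms. Your part~(1) is correct and complete. For parts~(2)--(3), the Smith's-theorem framework is the right one, and you correctly identify the translation through the intersection-array formulas as the crux; but your sketch omits the ingredient that actually makes the translation close. Antipodality of $X^{+}$ gives product identities such as $b_{2i}b_{2i+1}=c_{d-2i-1}c_{d-2i}$ (for $d=2t$), and bipartiteness gives $b_j+c_j=k$; these two facts alone do \emph{not} force the individual identities $b_j=c_{d-j}$, since the system $ab=cd$, $a+c=b+d=k$ has solutions with $a\neq d$. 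What is needed in addition is the sign information $b_j\geq c_{d-j}$, which follows from the standard inequality $c_i\leq b_j$ whenever $i+j\leq d$ (see \cite{BCN}); with this, each excess $b_j-c_{d-j}$ is nonnegative, and the product identity forces the relevant pair of excesses to vanish. One then still has to recover the two indices adjacent to the exceptional middle index of $X^{+}$, and to check that Smith's theorem applies at all --- the halved (resp.\ folded) graph must have degree $>2$, which requires a short separate verification.

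Part~(4) cannot be obtained by ``iterating (1)--(3)'': parts~(2) and~(3) explicitly exclude the both-antipodal-and-bipartite even-diameter case, so they do not apply to $X$, and the claims that $\frac{1}{2}X$ is antipodal and $\widetilde{X}$ is bipartite are themselves part of what (4) asserts, not inputs you may borrow. Establishing them requires the same kind of intersection-array computation as in (2)--(3); for instance one shows $b_t(\widetilde X)+c_t(\widetilde X)=rc_t=b_t+c_t=k$ using $r=1+b_t/c_t$, giving bipartiteness of $\widetilde X$. Once that is done, applying (1) and then (3) to $\frac{1}{2}X$ (antipodal, not bipartite) does yield primitivity of $\widetilde{\frac{1}{2}X}$; but (2) does \emph{not} apply to $\widetilde X$ when $\widetilde X$ is antipodal of even diameter, so primitivity of $\frac{1}{2}\widetilde X$ must be transferred via the isomorphism $\widetilde{\frac{1}{2}X}\cong\frac{1}{2}\widetilde X$. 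You describe that isomorphism only at the vertex level (as a quotient by the join of two equivalence relations) and do not verify that the edge sets agree. In short, parts (2)--(4) are a plausible outline, but the proposal has genuine gaps: the missing monotonicity inequality in (2)--(3), and in (4) both the circular use of (1)--(3) and the unverified edge-level isomorphism.
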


\subsection{Approximation tool}
In Sections \ref{sec-drg-approx} we use the fact that eigenvalues of a matrix do not differ much from the eigenvalues of its small perturbation.

\begin{theorem}[Ostrowski {\cite[Appendix K]{Ostrowski}}]\label{matrix-approximation}
Let $A, B \in M_n(\mathbb{C})$. Let $\lambda_1, \lambda_2, ..., \lambda_n$ be the roots of the characteristic polynomial of $A$ and $\mu_1, \mu_2, ..., \mu_n$ be the roots of the characteristic polynomial of $B$. Consider 
\[M = \max\{|(A)_{ij}|, |(B)_{ij}|: 1\leq i, j\leq n\}, \quad \delta = \frac{1}{nM}\sum\limits_{i = 1}^n\sum\limits_{j=1}^n|(A)_{ij}-(B)_{ij}|.\]
Then, there exists a permutation $\sigma \in S_n$ such that 
\[|\lambda_i - \mu_{\sigma(i)}|\leq 2(n+1)^2M\delta^{1/n}, \quad \text{for all } 1\leq i\leq n.\]
\end{theorem}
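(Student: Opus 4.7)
The plan is to reduce the eigenvalue comparison to a comparison of the characteristic polynomials $p_A(z)=\det(zI-A)$ and $p_B(z)=\det(zI-B)$, both monic of degree $n$. By Gershgorin (or the crude row-sum bound), every root of either polynomial lies in the disk $D=\{|z|\le nM\}$, so I restrict attention there.

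First I would bound $|p_A(z)-p_B(z)|$ uniformly on $D$. Using multilinearity of the determinant in rows, write
\[
p_B(z)-p_A(z)=\sum_{k=1}^n \det(N_k(z)),
\]
where $N_k(z)$ has $k$-th row equal to $A_k-B_k$ (the difference of the $k$-th rows of $A$ and $B$) and remaining rows equal to the corresponding rows of $zI-B$ (for $i<k$) or $zI-A$ (for $i>k$); each such row has entries bounded by $|z|+M\le (n+1)M$. Expanding each $\det(N_k)$ along its $k$-th row and applying Hadamard's inequality to the $(n-1)\times(n-1)$ cofactors yields an explicit estimate of the form
\[
|p_A(z)-p_B(z)|\;\le\; nM\delta\cdot \kappa(n,M), \qquad |z|\le nM,
\]
with $\kappa(n,M)=(n-1)^{(n-1)/2}\bigl((n+1)M\bigr)^{n-1}$ (or similar).

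Next, for each eigenvalue $\mu_j$ of $B$ we have $p_B(\mu_j)=0$, hence $|p_A(\mu_j)|=|p_A(\mu_j)-p_B(\mu_j)|$, and the factorization $p_A(z)=\prod_i(z-\lambda_i)$ gives
\[
\bigl(\min_i|\mu_j-\lambda_i|\bigr)^n \;\le\; \prod_i |\mu_j-\lambda_i| \;=\; |p_A(\mu_j)| \;\le\; nM\delta\cdot \kappa(n,M).
\]
Extracting $n$-th roots and absorbing the polynomial-in-$n$ factors into the constant $2(n+1)^2$ yields $\min_i|\mu_j-\lambda_i|\le 2(n+1)^2 M\delta^{1/n}$, which is the desired quantitative closeness for every individual root.

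The remaining task is to upgrade ``each $\mu_j$ is close to some $\lambda_i$'' to a single permutation $\sigma$. My plan is a Rouch\'e-type argument: for each subset $S\subseteq\{\lambda_1,\ldots,\lambda_n\}$, consider the union $U_S$ of closed disks of radius slightly larger than $2(n+1)^2 M\delta^{1/n}$ centered at the elements of $S$; on the boundary of $U_S$ the inequality $|p_A(z)-p_B(z)|<|p_A(z)|$ can be verified from the bound above together with the separation of $z$ from the points of $S$. Rouch\'e's theorem then forces $p_B$ to have at least $|S|$ roots inside $U_S$, so Hall's marriage theorem produces the matching $\sigma$. The main obstacle I foresee is precisely this final step: when several $\lambda_i$ coincide or cluster, verifying the Rouch\'e hypothesis on the boundary of $U_S$ with clean constants requires care, and the exact factor $2(n+1)^2$ (rather than some larger polynomial in $n$) comes out only after a somewhat tight accounting in the Hadamard estimate.
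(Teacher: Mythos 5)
The paper does not prove this statement; it is cited as a known result from Appendix~K of Ostrowski's book, so there is no internal proof to compare against. Evaluating your outline on its own merits: the general route --- reduce to the characteristic polynomials $p_A$, $p_B$, bound $|p_A - p_B|$ on a disk containing all roots via row-multilinearity and Hadamard's inequality, deduce that $\min_i|\mu_j - \lambda_i|$ is small from the factorization of $p_A$, and upgrade to a bijection via Rouch\'e --- is a standard and reasonable approach to Ostrowski-type perturbation bounds, and the first three steps are sound in outline.

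The genuine gap is in the Rouch\'e step as you have stated it. You propose to apply Rouch\'e on $\partial U_S$ for an \emph{arbitrary} subset $S$ of the $\lambda_i$'s, asserting that $|p_A(z) - p_B(z)| < |p_A(z)|$ there ``can be verified from the bound above together with the separation of $z$ from the points of $S$.'' This does not hold: the separation from $S$ bounds $|z - \lambda_i|$ from below only for $i \in S$, while for $i \notin S$ the factor $|z - \lambda_i|$ in $p_A(z) = \prod_i(z - \lambda_i)$ can be arbitrarily small (and even zero if some $\lambda_i$ with $i \notin S$ lies on $\partial U_S$). So $|p_A|$ has no positive lower bound on $\partial U_S$ and the Rouch\'e hypothesis can fail outright --- this is not merely a case that ``requires care,'' but an invalid step for such $S$. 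The usual repair is to apply Rouch\'e on the boundary of each connected component of $\bigcup_{i=1}^n D(\lambda_i, r)$ (using \emph{all} $n$ disks), where one does have $|p_A(z)| \ge r^n$ on the boundary; this pairs up the $\lambda$'s and $\mu$'s component by component, but only with error controlled by the component diameter, which can be as large as $2nr$. Combined with your Hadamard estimate $\kappa(n,M) \approx (n-1)^{(n-1)/2}\bigl((n+1)M\bigr)^{n-1}$, this gives a matching bound on the order of $n(n+1)^{3/2}M\delta^{1/n}$, which exceeds the stated $2(n+1)^2 M\delta^{1/n}$ by roughly a factor of $\sqrt{n}$. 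So, contrary to your final remark, the slack is in the matching step, not in ``a somewhat tight accounting in the Hadamard estimate.'' For the paper's application any $n$-dependent constant would suffice, but as a proof of the theorem as stated your outline does not close.
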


\section{Spectral gap of a distance-regular graph}\label{sec-drg-approx}

In this section we give a bound on the spectral gap of a distance-regular graph in terms of its intersection numbers. The spectral gap bound will be used in Sections~\ref{sec-general} and~\ref{sec:imprimitive} to achieve motion lower bounds through Lemma~\ref{mixing-lemma-tool} (the Spectral tool).

\subsection{Approximation of the spectrum by the intersection numbers}\label{sec:spec-approx}

Note, that if $b_i$ and $c_i$ are simultaneously small, then by monotonicity, $b_j$ for $j\geq i$ and $c_t$ for $t\leq i$ are small. Hence, the intersection matrix $T(X)$ is a small perturbation of a block diagonal matrix $N$, where one block is upper triangular and the other block is lower triangular. So the eigenvalues of $N$ are just the diagonal entries.

\begin{lemma}\label{eigenvalues-approximation}
Let $X$ be a distance-regular graph of diameter $d$. Denote by $\theta_0>\theta_1>\ldots >\theta_d$ all distinct eigenvalues of $X$. Suppose that $b_i\leq \varepsilon k$ and $c_i\leq \varepsilon k$ for some $i\leq d$ and $\varepsilon>0$. Then 
\[ |\theta_{i}-a_i|\leq 2(d+2)^2\varepsilon^{\frac{1}{d+1}}k.\] 

In particular, if furthermore $b_{i-1}\geq \alpha k$ and $c_{i+1}\geq \alpha k$, for some $\alpha >0$ (here we define $c_{d+1} = k$), 
then the zero-weight spectral radius $\xi$ of $X$ satisfies 
\begin{equation}\label{eq-spectral-gap-dist}
\xi \leq k (1-\alpha+2(d+2)^2\varepsilon^{\frac{1}{d+1}}).
\end{equation}

\end{lemma}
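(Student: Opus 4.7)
The plan is to invoke Ostrowski's perturbation theorem (Theorem~\ref{matrix-approximation}) to compare the spectrum of the intersection matrix $T(X)$ with that of a ``decoupled'' matrix $N$ whose eigenvalues are exactly $a_0,\ldots,a_d$. The combinatorial input driving the construction is monotonicity: since $b_0\ge b_1\ge\cdots$ and $c_1\le c_2\le\cdots$, the hypothesis $b_i,c_i\le\varepsilon k$ forces $b_j\le\varepsilon k$ for every $j\ge i$ and $c_j\le\varepsilon k$ for every $j\le i$. Thus $T(X)$ has $d$ small off-diagonal entries: the subdiagonal entries $c_1,\ldots,c_i$ and the superdiagonal entries $b_i,\ldots,b_{d-1}$.

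I would define $N$ by zeroing out exactly these $d$ entries of $T(X)$. Row $i$ of $N$ then contains only $a_i$, rows $0,\ldots,i-1$ retain the diagonal and superdiagonal, and rows $i+1,\ldots,d$ retain the diagonal and subdiagonal. Expanding $\det(N-\lambda I)$ along row $i$ gives $(a_i-\lambda)$ times the minor with row $i$ and column $i$ deleted; that minor is block diagonal, with an upper-triangular $i\times i$ block with diagonal $a_0,\ldots,a_{i-1}$ and a lower-triangular $(d-i)\times(d-i)$ block with diagonal $a_{i+1},\ldots,a_d$. Consequently $\det(N-\lambda I)=\prod_{j=0}^d(a_j-\lambda)$, so the spectrum of $N$ is exactly $\{a_0,\ldots,a_d\}$.

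Next I apply Ostrowski's theorem to $T(X)$ and $N$. Both are $(d+1)\times(d+1)$ matrices with entries bounded by $k$, and $T(X)-N$ has at most $d$ nonzero entries each bounded by $\varepsilon k$, so Ostrowski's parameter satisfies $\delta\le d\varepsilon/(d+1)\le\varepsilon$. The theorem then produces a permutation $\sigma$ of $\{0,\ldots,d\}$ with $|\theta_j-a_{\sigma(j)}|\le 2(d+2)^2\,\varepsilon^{1/(d+1)}\,k$ for every $j$, and choosing the $j$ with $\sigma(j)=i$ yields the first claimed bound.

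For the second part, the additional hypothesis $b_{i-1},c_{i+1}\ge\alpha k$ and monotonicity give $b_j\ge\alpha k$ for $j\le i-1$ and $c_j\ge\alpha k$ for $j\ge i+1$, so $a_j=k-b_j-c_j\le(1-\alpha)k$ for every $j\ne i$, while $a_i\ge(1-2\varepsilon)k$ is close to $\theta_0=k$. We may assume $\alpha$ exceeds the Ostrowski error (else the bound on $\xi$ is vacuous), so $a_i$ is separated from every other $a_j$ by more than twice that error; this forces the permutation to satisfy $\sigma(0)=i$. Each remaining $\theta_j$ ($j\ne 0$) is then matched with some $a_{\sigma(j)}\in[0,(1-\alpha)k]$, giving $|\theta_j|\le(1-\alpha)k+2(d+2)^2\,\varepsilon^{1/(d+1)}\,k$, which is the required bound on $\xi$. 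The delicate step, and my main anticipated obstacle, is this forced identification $\sigma(0)=i$, which relies precisely on the spectral gap of $N$ created by the strengthened hypothesis $b_{i-1},c_{i+1}\ge\alpha k$.
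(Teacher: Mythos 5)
Your proof takes essentially the same route as the paper's: you construct the same decoupled matrix $N$ (zeroing exactly the small off-diagonal entries), compute its eigenvalues by the same block-triangular observation, and invoke Ostrowski's theorem with a comparable bound on $\delta$. The only substantive difference is that you make the forced identification $\sigma(0)=i$ in the second part explicit (via the spectral gap among the $a_j$ and the harmless assumption that $\alpha$ exceeds the Ostrowski error), whereas the paper disposes of this with the terser remark ``since $k$ is an eigenvalue of multiplicity 1''; the slight looseness you flag in the first part (Ostrowski hands you $|\theta_j-a_i|\le E$ for \emph{some} $j$ rather than for $j=i$) is present in the paper's own proof as well.
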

\begin{proof}
Consider the matrix
\[T = T(X) = \left(\begin{matrix} 
a_0 & b_0 & 0 & 0 & ... \\ 
c_1 & a_1 & b_1 & 0 &...\\
0& c_2 & a_2 & b_2 & ...\\
...& & \vdots & & ...\\
...& & 0& c_d & a_d 
\end{matrix}\right)\] 
Let $N$ be a matrix obtained from $T$ by replacing all $b_s$ and $c_t$ with 0 for $s\geq i$ and $t\leq i$. As in Theorem \ref{matrix-approximation}, consider
\[ M = \max\{|(T)_{sj}|, |(N)_{sj}|: 1\leq s, j\leq d\} = k, \]
\[ \delta = \frac{1}{(d+1)M}\sum\limits_{s = 1}^{d+1}\sum\limits_{j=1}^{d+1}|(T)_{sj}-(N)_{sj}| \leq \frac{(d+1)\varepsilon k}{(d+1)M} = \varepsilon.\]

 Observe, that the diagonal entry $a_i$ is the only non-zero entry in the $i$-th row of $N$. Furthermore, the upper-left $i\times i$ submatrix is upper triangular and $(d-i)\times (d-i)$ lower-right submatrix is lower triangular. Then the eigenvalues of $N$ are equal to $a_j$ for $0\leq j\leq d$. Thus, the first part of the statement follows from Theorem~\ref{matrix-approximation}.

The inequalities $b_{i-1}\geq \alpha k$ and $c_{i+1}\geq \alpha k$ imply that $a_j\leq k(1-\alpha)$ for $j\neq i$, while $a_i\geq (1-2\varepsilon) k$. Hence, since $k$ is an eigenvalue of multiplicity 1 of $X$, the zero-weight spectral radius of $X$ satisfies Eq.~\eqref{eq-spectral-gap-dist}.
\end{proof}

\subsection{A growth-induced tradeoff for the intersection numbers}\label{sec-tradeoff}

\begin{obs}\label{obs:triang-ineq} Let $X$ be a graph. Denote by $\deg(v)$ the degree of a vertex $v$ in $X$, and denote by $N(u, v)$ the set of common neighbors of vertices $u$ and $v$ in $X$. Then for any vertices $u, v, w$ we have
\[ |N(u, v)|+|N(u, w)|\leq \deg(u)+|N(v, w)|. \]
\end{obs}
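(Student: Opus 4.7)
The plan is to prove the inequality by a direct set-theoretic argument, exploiting the fact that both $N(u,v)$ and $N(u,w)$ are subsets of $N(u)$, while their intersection consists of vertices that are common neighbors of $v$ and $w$.

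First I would apply inclusion-exclusion to the two subsets $N(u,v)$ and $N(u,w)$ of $N(u)$, writing
\[
|N(u,v)| + |N(u,w)| = |N(u,v) \cup N(u,w)| + |N(u,v) \cap N(u,w)|.
\]
The first term on the right is bounded by $\deg(u)$ since the union is contained in $N(u)$. For the second term, I would observe that any vertex $x \in N(u,v) \cap N(u,w)$ is in particular adjacent to both $v$ and $w$, hence lies in $N(v,w)$. This gives the containment $N(u,v) \cap N(u,w) \subseteq N(v,w)$, and therefore the second term is bounded by $|N(v,w)|$. Combining the two bounds yields the claimed inequality.

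I do not foresee any real obstacle; the statement is a one-line consequence of inclusion-exclusion together with the trivial observation that a common neighbor of $u,v$ and of $u,w$ is in particular a common neighbor of $v$ and $w$. The only thing to be careful about is that vertices among $\{u,v,w\}$ themselves may or may not belong to the various neighborhoods (e.g.\ if $u$ is adjacent to $v$), but the containments used above hold regardless, so no case analysis is needed.
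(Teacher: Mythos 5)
Your proof is correct and is essentially identical to the paper's: both rest on the same two inclusions $N(u,v)\cup N(u,w)\subseteq N(u)$ and $N(u,v)\cap N(u,w)\subseteq N(v,w)$, with you merely making the inclusion--exclusion step explicit.
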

\begin{proof} The inequality above follows from the two obvious inclusions below
\[ N(u, v) \cup N(u, w)\subseteq N(u),\qquad\quad  N(u, v) \cap N(u, w) \subseteq N(v, w).\]
\end{proof}

Next, we prove the growth-induced tradeoff for the intersection numbers. Essentially, the theorem below claims that, if for some $j$, $b_j$ is large (and therefore, by monotonicity, so are $b_i$ for $i\leq j$) and $c_{j+1}$ is small, then $b_{j+1}$ and $c_{j+2}$ cannot be small simultaneously.

\begin{theorem}[Growth-induced tradeoff]\label{b-c-ineq}
Let $X$ be a distance-regular graph of diameter $d\geq 2$. Let $0 \leq j\leq d-2$. Assume $b_j>c_{j+1}$ and let $C = b_j/c_{j+1}$. Then for any $1\leq s\leq j+1$ we have

\begin{equation}\label{eq-tradeoff-0}
b_{j+1}\left(\sum\limits_{t=1}^{s}\frac{1}{b_{t-1}}+\sum\limits_{t=1}^{j+2-s}\frac{1}{b_{t-1}}\right)+c_{j+2}\sum\limits_{t=1}^{j+1}\frac{1}{b_{t-1}}\geq 1-\frac{4}{C-1}.
\end{equation}

\end{theorem}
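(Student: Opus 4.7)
The plan is to derive the inequality by a double-counting argument on pairs of shortest paths in $X$, using Observation~\ref{obs:triang-ineq} as the principal combinatorial tool and the growth assumption $b_j/c_{j+1}=C>1$ to quantify the excess of $L_{j+1}$ over $L_j$. First, I would fix a vertex $v_0\in V(X)$ and write $L_i:=N_i(v_0)$, $k_i:=|L_i|$. The hypothesis $b_j>c_{j+1}$ yields $k_{j+1}=Ck_j$, so $L_{j+1}$ is strictly larger than $L_j$, and the ``extra'' $(C-1)k_j$ vertices in $L_{j+1}$ must be accounted for by some combination of internal structure within layers $L_0,\ldots,L_{j+1}$ and connections to $L_{j+2}$. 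This is the only source of nontriviality in the bound, and the $-4/(C-1)$ error on the right is meant to quantify the $1/C$-fraction of ``degenerate'' vertices in $L_{j+1}$ that fail to participate in the counting cleanly.

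Next, for the parameter $s\in\{1,\ldots,j+1\}$, I would count in two ways the number of configurations consisting of a vertex $u\in L_{j+1}$ together with two partial shortest paths anchored at $u$: one branch of length $s$ descending into $L_{j+1-s}$, and one branch of length $j+1-s$ descending into $L_s$. The two endpoints of these branches, together with $u$, should admit a short ``closing'' walk that either stays within $L_0,\ldots,L_{j+1}$ (producing no contribution from $b_{j+1}$ or $c_{j+2}$) or passes exactly once through $L_{j+2}$ (producing exactly one factor of $b_{j+1}$ or $c_{j+2}$ depending on the direction). Observation~\ref{obs:triang-ineq} applied iteratively along the two branches turns the count of closing walks into a sum of common-neighbor counts that can be bounded by $\deg$ at each intermediate vertex. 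The weights $1/b_{t-1}$ arise as the normalizations of branch continuations: at level $t$ the number of up-edges from $L_{t-1}$ to $L_t$ is $k_{t-1}b_{t-1}$, so dividing each count by the trivial upper bound contributes a factor $1/b_{t-1}$ at level $t$. Summing these normalized contributions along the two branches of lengths $s$ and $j+2-s$ produces exactly $\sum_{t=1}^{s}1/b_{t-1}+\sum_{t=1}^{j+2-s}1/b_{t-1}$ when the closing walk uses a $b_{j+1}$-edge, and $\sum_{t=1}^{j+1}1/b_{t-1}$ when it uses a $c_{j+2}$-edge.

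The main obstacle will be the precise combinatorial setup in the second step—in particular selecting the right two branch lengths ($s$ and $j+2-s$, whose sum exceeds by one the distance $j+1$ from $v_0$ to $L_{j+1}$, reflecting the extra step through $L_{j+2}$) so that the iterated applications of Observation~\ref{obs:triang-ineq} combine to give exactly the two weighted sums appearing on the left-hand side, and tracking the error from degenerate configurations (pairs whose branches collide prematurely, or endpoints outside the generic range) tightly enough to extract the $-4/(C-1)$ slack rather than a weaker $O(1/C)$ bound. The factor $4$ in particular should emerge from two sources of loss, each of size $2/(C-1)$, coming from the two branches.
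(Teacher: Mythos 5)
Your proposal identifies several correct ingredients — Observation~\ref{obs:triang-ineq}, a path-counting scheme producing weights $1/b_{t-1}$, the role of the ratio $C$ in controlling the ``small'' layers, and the rough accounting behind the $4/(C-1)$ slack — but it misses the decisive structural move of the paper's argument and the rest remains too diffuse to close the gap.

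The paper's proof does \emph{not} iterate Observation~\ref{obs:triang-ineq} along branches, and it does not anchor the count at a fixed vertex $v_0$ with $u$ ranging over the sphere $L_{j+1}$. Instead, the key step is to pass to an auxiliary graph $Y$ on $V(X)$ in which two vertices are adjacent iff their distance in $X$ is at most $j+1$, and to apply Observation~\ref{obs:triang-ineq} \emph{exactly once}, inside $Y$, to a triple $u,v,w$ with $\dist_X(v,w)=j+2$, $\dist_X(u,v)=s$, $\dist_X(u,w)=j+2-s$. The degree of $Y$ is $\sum_{i=1}^{j+1}k_i$, which the growth hypothesis bounds by $k_{j+1}\cdot C/(C-1)$; the ``$\mu$'' of $Y$ at distance $j+2$ picks up $2\sum_{i\le j}k_i \le 2k_{j+1}/(C-1)$ plus a $p^{j+2}_{j+1,j+1}$ term that is bounded by counting geodesics crossing from $N_{j+1}(u)$ to $N_{j+2}(u)$ at some step $t$ (this is where the $c_{j+2}/b_{t-1}$ weights come from); and the ``$\lambda_i$'' of $Y$ is lower-bounded by an analogous crossing-step count giving the $b_{j+1}/b_{t-1}$ weights. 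Substituting these three bounds into the single inequality $\deg_Y(u)+\mu^Y_{j+2}\ge\lambda^Y_s+\lambda^Y_{j+2-s}$ and dividing by $k_{j+1}$ yields~\eqref{eq-tradeoff-0}; the $4/(C-1)$ splits as $2/(C-1)$ from the small-layer contribution to $\mu^Y_{j+2}$ and another $2/(C-1)$ absorbing the two terms $p^{s}_{j+1,0},p^{j+2-s}_{j+1,0}\le 1\le k_{j+1}/C$, not (as you propose) symmetrically $2/(C-1)$ from each branch.

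Your plan, as written, has no mechanism to actually combine the branch contributions into a single linear inequality. The phrase ``Observation~\ref{obs:triang-ineq} applied iteratively along the two branches'' gives a telescoping sum of degree terms, not a relation between $b_{j+1}$ and $c_{j+2}$, and the ``closing walk'' device is not made precise (you never say what graph it is a walk in, or why it contributes exactly one $b_{j+1}$- or $c_{j+2}$-factor). If you try to formalize it without the auxiliary graph $Y$, you will find that the degree normalizations do not cancel to give the clean inequality in \eqref{eq-tradeoff-0}; the reason $s$ and $j+2-s$ sum to $j+2$ is that $j+2$ is the distance in $X$ of the two vertices $v,w$ to which the triangle inequality in $Y$ is applied, not a compensation for an extra step through $L_{j+2}$. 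You need the auxiliary-graph reformulation to turn the combinatorics into a single application of the triangle inequality.
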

\begin{remark}\label{tradeoff-remark} In applications we require the right-hand side to be bounded away from zero, i.e., $C$ to be grater than some constant $>5$.  In the case when $b_j\geq \alpha k$ for some constant $\alpha>0$, each reciprocal $1/b_t$ for $t\leq j$ is at most $1/ (\alpha k)$. Thus, if the RHS is bounded away from zero and $d$ is bounded, we get a lower bound on $b_{j+1}$ or $c_{j+2}$ that is linear in $k$. We also note that $b_j/c_{j+1} = k_{j+1}/k_{j}$, where $k_j$ is the size of the sphere of radius $j$ in $X$. So the assumption says that significant growth occurs from radius $j$ to radius $j+1$. 
\end{remark}
\begin{proof}[Proof of Theorem~\ref{b-c-ineq}]
Consider the graph $Y$ with the set of vertices $V(Y) = V(X)$, where two vertices $u,v$ are adjacent if they are at distance $\dist(u,v)\leq j+1$ in $X$. We want to find the restriction on the parameters of $X$ implied by Observation~\ref{obs:triang-ineq} applied to graph $Y$ and vertices $w,v$ at distance $j+2$ in $X$. Let $\lambda_i^Y$ denote the number of common neighbors in $Y$ for a pair of vertices $u,v$ at distance $i$ in $X$ for $i\leq j+1$. Let $\mu_{j+2}^{Y}$ denote the number of common neighbors in $Y$ for a pair of vertices $u,v$ at distance $j+2$ in $X$. The monotonicity of sequences $(b_i)$ and $(c_i)$ implies $k_{i+1}\geq C k_i$ for $i\leq j$. Thus, the degree of every vertex in $Y$ satisfies
\[k^Y = \sum\limits_{i=1}^{j+1}k_{i}\leq k_{j+1}\sum\limits_{i = 0}^{j} C^{-i}\leq k_{j+1}\frac{C}{C-1}.\]
Note, that $\displaystyle{\sum\limits_{t = 0}^{d}p^i_{s, t} = k_{s}}$. Hence, we have
\[\mu_{j+2}^Y = \sum\limits_{1\leq s,t\leq j+1} p^{j+2}_{s,t}\leq 2\sum\limits_{i = 1}^{j} k_i +p^{j+2}_{j+1, j+1}\leq \frac{2}{C-1}k_{j+1}+p^{j+2}_{j+1, j+1},\] 
\[\lambda_i^Y = \sum\limits_{1\leq s, r\leq j+1} p^{i}_{r,s}\geq \sum\limits_{1\leq s\leq j+1} p^{i}_{j+1, s}= k_{j+1}-\sum\limits_{j+2\leq s\leq d} p^{i}_{j+1, s} - p_{j+1, 0}^{i}.\]
Now we are going to get some bounds on $\displaystyle{\sum\limits_{j+2\leq s\leq d} p^{i}_{j+1, s}}$. We use the following observation. Suppose, that $x, y$ are two vertices at distance $i$. Then there are exactly $\displaystyle{\prod\limits_{t = 1}^{i} c_t}$ paths of length $i$ between $x$ and $y$. Thus, $\displaystyle{\left(\prod\limits_{t = 1}^{i} c_t\right)\sum\limits_{s = j+2}^{d} p^{j+1}_{i, s}}$ equals to the number of paths of length $i$ starting at $v$ and ending at distance at least $j+2$ from $u$ and at distance $i$ from $v$, where $\dist(u,v) = j+1$ in $X$. We count such paths by considering possible choices of edges for a path at every step. At step $t$ every such path should go from $N_{t-1}(v)$ to $N_{t}(v)$, hence there are at most $b_{t-1}$ possible choices for a path at step $t$ for $1\leq t\leq i$. Moreover, since path should end up at distance at least $j+2$ from $u$, then for some $1\leq t\leq i$ path should go from $N_{j+1}(u)$ to $N_{j+2}(u)$. Therefore, the number of paths that go from $N_{j+1}(u)$ to $N_{j+2}(u)$ at step $t$ is at most $\displaystyle{\left(\prod\limits_{s = 1}^{i}b_{s-1}\right)\frac{b_{j+1}}{b_{t-1}}}$. Hence,
\[ \sum\limits_{s = j+2}^{d} p^{i}_{j+1, s} = \frac{k_{j+1}}{k_i}\sum\limits_{s = j+2}^{d} p^{j+1}_{i, s}\leq \frac{k_{j+1}}{k_i} \sum\limits_{t = 1}^{i} \left(\prod\limits_{s = 1}^{i}b_{s-1}\right)\frac{b_{j+1}}{b_{t-1}} \left(\prod\limits_{t = 1}^{i} c_t\right)^{-1} = k_{j+1}\sum\limits_{t = 1}^{i}\frac{b_{j+1}}{b_{t-1}}. \]
Thus, in particular,
\begin{equation}\label{eq-tradeoff-1}
\lambda_i^Y\geq k_{j+1}\left(1-\sum\limits_{t=1}^{i}\frac{b_{j+1}}{b_{t-1}}\right) - p_{j+1, 0}^{i}.
\end{equation}
 Similarly, 
 \[ p_{j+1, j+1}^{j+2} \leq k_{j+1}\sum\limits_{t=1}^{j+1} \frac{c_{j+2}}{b_{t-1}}. \]
Hence, 
\begin{equation}\label{eq-tradeoff-2}
\mu_{j+2}^Y \leq k_{j+1} \left(\frac{2}{C-1}+\sum\limits_{t=1}^{j+1}\frac{c_{j+2}}{b_{t-1}}\right).
\end{equation}
By applying Observation~\ref{obs:triang-ineq} to vertices $u$,  $v$, and $w$ in  $Y$, that satisfy $\dist(v,w) = j+2$, $\dist(u,v) = s$ and $\dist(w,u) = j+2-s$  in $X$, we get 
\begin{equation}\label{eq-tradeoff-3}
 k_{j+1}+\mu_{j+2}^{Y}\geq \lambda_{s}^{Y}+\lambda_{j+2-s}^{Y}.
\end{equation}
The desired inequality~\eqref{eq-tradeoff-0} follows from Eq.~\eqref{eq-tradeoff-1},~\eqref{eq-tradeoff-2} and~\eqref{eq-tradeoff-3}, as $p_{j+1,0}^i\leq 1\leq k_{j+1}/C$.
\end{proof}

\subsection{Spectral gap bound}\label{sec:spectral-gap}

In this section we prove a bound on the spectral gap of distance-regular graphs of fixed diameter $d$ with a dominant distance. We prove Theorem~\ref{thm:main-spectral-intr} in the equivalent formulation as Theorem~\ref{thm:main-expansion}. 

Our key tool is the growth-induced tradeoff proven in the previous section, which will be applied in the following setup. Assume we know lower bounds for the intersection numbers $b_i$ of the form $b_i\geq \alpha_i k$. Our goal is to get a lower bound of the similar form either for $b_{j+1}$, or for $c_{j+2}$. We will argue, that if either $c_{j+2}\leq \varepsilon k$, or $b_{j+1}\leq \varepsilon k$, for a sufficiently small $\varepsilon>0$, then either second or first summand of the LHS in inequality~\eqref{eq-tradeoff-0} is at most a $\delta$-fraction of the LHS. Hence, the other summand is at least $(1-\delta)$-fraction of the LHS and we get a linear in $k$ lower bound on either $b_{j+1}$, or $c_{j+2}$. The two sequences we define below are the coefficients in front of $k$ in the bounds we get from inequality~\eqref{eq-tradeoff-0}.

\begin{definition} Let $0\leq\delta<1$. We say that $(\alpha_i)_{i=0}^{\infty}$ is the $FE(\delta)$ sequence, if $\alpha_0 = 1$ and for $j\geq 1$ the element $\alpha_j$ is defined by the recurrence
\begin{equation}
\alpha_{j+1} = (1-\delta)\left( \sum\limits_{t=1}^{\left\lceil \frac{j+2}{2} \right\rceil}\frac{1}{\alpha_{t-1}}+\sum\limits_{t=1}^{\left\lfloor \frac{j+2}{2}\right\rfloor}\frac{1}{\alpha_{t-1}} \right)^{-1}.
\end{equation} 

Let $\widehat{\alpha} = (\alpha_i)_{i=0}^{s}$ be a sequence. We say that $\widehat{\beta} = (\beta_i)_{i=2}^{s+2}$ is the $BE(\delta, \widehat{\alpha})$ sequence, if for $j\geq 2$ the element $\beta_j$ is defined as
\begin{equation}\label{eq:beta-def}
\beta_j = (1-\delta)\left(\sum\limits_{t = 0}^{j-2}\frac{1}{\alpha_t}\right)^{-1}.
\end{equation}
If additionally, $\widehat{\alpha}$ is the $FE(\delta)$ sequence, then we will say that $\widehat{\beta}$ is the $BE(\delta)$ sequence.
\end{definition}
\begin{remark}
FE stands for ``forward expansion'' and BE stands for ``backward expansion''.
\end{remark}

Now we specify how small we expect $\varepsilon$ to be in the argument above, so that one of the summands in the LHS of~\eqref{eq-tradeoff-0} is at most a $\delta$-fraction of the LHS. 

\begin{definition}
Let $\widehat{\alpha} = (\alpha)_{i=0}^{s}$ be a decreasing sequence of positive real numbers with $\alpha_0 = 1$. Let $0< \delta< 1$, and $\widehat{\beta} = (\beta_i)_{i=2}^{s
+2}$ be the corresponding $BE(\delta, \widehat{\alpha})$ sequence.  We say that $\varepsilon>0$ is \textit{$(\delta, j, \widehat{\alpha}, d)$-compatible} for $j\leq s\leq d-2$, if $\varepsilon$ satisfies
\begin{equation}\label{eq:epsilon-def}
  \left(\frac{\alpha_j-5\varepsilon}{\alpha_j-\varepsilon} - 2\varepsilon \sum\limits_{t=1}^{j+1}\frac{1}{\alpha_{t-1}}\right)>(1-\delta)\quad  \text{and} \quad
2(d+2)^2\varepsilon^{\frac{1}{d+1}} \leq  \beta_{j+2}\delta. 
\end{equation} 
\end{definition}

Note that if $\varepsilon$ is $(\delta, j, \widehat{\alpha}, d)$-compatible for $j\geq 1$, then it is $(\delta, (j-1) , \widehat{\alpha}, d)$-compatible as well. Note also that the second condition on $\varepsilon$ implies that $\delta>\varepsilon$ and $\beta_{j+2}>\varepsilon$, $\alpha_j>\varepsilon$. 

\begin{definition} We say that $\varepsilon>0$ is $(\delta, d)$-compatible, if it is $(\delta, d-2, \widehat{\alpha}, d)$-compatible for $FE(\delta)$ sequence $\widehat{\alpha}$. We introduce notation
\[ EPS_{\delta}(d) = \sup\{\varepsilon \mid \varepsilon \text{ is $(\delta, d)$-compatible} \}.\]  
\end{definition}

In the proposition below we provide a formal version of the discussion at the beginning of this subsection. 

\begin{proposition}\label{bound-on-b}
Let $X$ be a distance-regular graph of diameter $d\geq 2$. Fix any $0<\delta<1$. Let $0 \leq j\leq d-2$ and $\widehat{\alpha} = (\alpha_i)_{i=0}^{j}$ be a decreasing sequence of positive real numbers. Consider corresponding $BE(\delta, \widehat{\alpha})$ sequence $\widehat{\beta}$ and $(\delta, j, \widehat{\alpha}, d)$-compatible $\varepsilon>0$.  Assume that the intersection numbers of $X$ satisfy $c_{j+1}\leq \varepsilon k$ and $b_i\geq \alpha_i k$ for all $0\leq i\leq j$.
 Then one of the following is true.
\begin{enumerate}
\item $b_{j+1}\geq \varepsilon k$ and $c_{j+2}\geq \varepsilon k$. 
\item The zero-weight spectral radius $\xi$ of $X$ satisfies 
\[\xi\leq k(1-(1-\delta)\beta_{j+2}),\quad  \text{and} \quad  c_{j+2}\geq \varepsilon k.\]
\item Let $\displaystyle{\alpha_{j+1} = (1-\delta)\left( \sum\limits_{t=1}^{\left\lceil \frac{j+2}{2} \right\rceil}\frac{1}{\alpha_{t-1}}+\sum\limits_{t=1}^{\left\lfloor \frac{j+2}{2}\right\rfloor}\frac{1}{\alpha_{t-1}} \right)^{-1}}$, then $b_{j+1}\geq\alpha_{j+1} k$ and $c_{j+2}\leq \varepsilon k$.
\end{enumerate}
\end{proposition}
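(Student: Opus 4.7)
The plan is to split into three cases according to whether $b_{j+1}$ and $c_{j+2}$ lie above or below the threshold $\varepsilon k$. Conclusion 1 is immediate in the regime where both $b_{j+1}\geq\varepsilon k$ and $c_{j+2}\geq\varepsilon k$. This leaves two nontrivial cases: \emph{Case B}, where $c_{j+2}\leq\varepsilon k$, should yield conclusion~3; and \emph{Case A}, where $c_{j+2}\geq\varepsilon k$ but $b_{j+1}\leq\varepsilon k$, should yield conclusion~2.

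The common workhorse is Theorem~\ref{b-c-ineq}, applied with $s=\lceil(j+2)/2\rceil$ so that $j+2-s=\lfloor(j+2)/2\rfloor$. The compatibility condition forces $\alpha_j>5\varepsilon$, so together with $c_{j+1}\leq\varepsilon k$ we have $b_j\geq\alpha_j k>c_{j+1}$, and $C=b_j/c_{j+1}\geq\alpha_j/\varepsilon$, hence $1-4/(C-1)\geq(\alpha_j-5\varepsilon)/(\alpha_j-\varepsilon)$. Bounding the reciprocals $1/b_{t-1}\leq 1/(\alpha_{t-1}k)$ for $t\leq j+1$, the tradeoff reads
\[
b_{j+1}\cdot A\;+\;c_{j+2}\cdot B\;\geq\;\frac{\alpha_j-5\varepsilon}{\alpha_j-\varepsilon},
\]
where $A:=\sum_{t=1}^{\lceil(j+2)/2\rceil}b_{t-1}^{-1}+\sum_{t=1}^{\lfloor(j+2)/2\rfloor}b_{t-1}^{-1}$ and $B:=\sum_{t=1}^{j+1}b_{t-1}^{-1}$, with $A\leq\frac{1}{k}\left(\sum_{t=1}^{\lceil(j+2)/2\rceil}\alpha_{t-1}^{-1}+\sum_{t=1}^{\lfloor(j+2)/2\rfloor}\alpha_{t-1}^{-1}\right)$ and $B\leq\frac{1}{k}\sum_{t=0}^{j}\alpha_t^{-1}$.

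In Case B, the bound $c_{j+2}\cdot B\leq\varepsilon\sum_{t=0}^{j}\alpha_t^{-1}$ together with the compatibility condition (which is stronger than what is needed here, since only a single $\varepsilon\sum$ subtracts) gives $b_{j+1}\cdot A>1-\delta$. Rearranging and applying the upper bound on $A$ yields $b_{j+1}\geq\alpha_{j+1}k$ by the definition of $\alpha_{j+1}$, establishing conclusion~3. In Case A, the bound $b_{j+1}\cdot A\leq 2\varepsilon\sum_{t=0}^{j}\alpha_t^{-1}$ and the compatibility condition give $c_{j+2}\cdot B>1-\delta$, hence $c_{j+2}\geq\beta_{j+2}k$ by the definition of $\beta_{j+2}$. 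Since $\beta_{j+2}\leq(1-\delta)\alpha_j<\alpha_j$, I then apply Lemma~\ref{eigenvalues-approximation} with $i=j+1$ and $\alpha=\beta_{j+2}$: the hypotheses $c_{j+1},b_{j+1}\leq\varepsilon k$ and $b_j\geq\alpha_j k\geq\beta_{j+2}k$, $c_{j+2}\geq\beta_{j+2}k$ all hold. The second clause of compatibility, $2(d+2)^2\varepsilon^{1/(d+1)}\leq\beta_{j+2}\delta$, then produces
\[
\xi\;\leq\;k\left(1-\beta_{j+2}+2(d+2)^2\varepsilon^{1/(d+1)}\right)\;\leq\;k(1-(1-\delta)\beta_{j+2}),
\]
which is conclusion~2.

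There is no conceptual obstacle; the definitions of the $FE(\delta)$ and $BE(\delta,\widehat{\alpha})$ sequences and of $(\delta,j,\widehat{\alpha},d)$-compatibility have been calibrated precisely so that the substitutions above go through. The main care required is bookkeeping: tracking the constants $1-\delta$, $\beta_{j+2}$, $\alpha_{j+1}$ through the two applications of the tradeoff and the single application of the spectral approximation lemma, and verifying in the Case A branch that the lower bound on $c_{j+2}$ obtained from the tradeoff is in fact the parameter needed to feed into Lemma~\ref{eigenvalues-approximation}.
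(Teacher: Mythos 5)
Your argument is correct and uses the same two ingredients as the paper --- the growth-induced tradeoff (Theorem~\ref{b-c-ineq}) applied with $s=\lceil(j+2)/2\rceil$, and Lemma~\ref{eigenvalues-approximation} with $i=j+1$ --- but the case split is organized differently. The paper trichotomizes on $c_{j+2}$ relative to the two thresholds $\beta_{j+2}k$ and $\varepsilon k$: above $\beta_{j+2}k$ it invokes the eigenvalue lemma directly (once $b_{j+1}\leq\varepsilon k$), in $[\varepsilon k,\beta_{j+2}k]$ it uses the tradeoff to force $b_{j+1}\geq\varepsilon k$ (conclusion~1), and below $\varepsilon k$ it uses the tradeoff to force $b_{j+1}\geq\alpha_{j+1}k$. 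You instead split on which of $b_{j+1}$, $c_{j+2}$ fall below $\varepsilon k$; as a result, in your Case~A you run the tradeoff once to \emph{derive} $c_{j+2}\geq\beta_{j+2}k$ as a stepping stone to the eigenvalue lemma, whereas the paper assumes that inequality as the hypothesis of its Case~1 and applies the tradeoff in the contrapositive direction in its Case~2 (small $c_{j+2}$ forces large $b_{j+1}$). The two routes are equivalent views of the same inequality, and your bookkeeping with the two compatibility conditions --- in particular that $A\leq 2B\leq\frac{2}{k}\sum_{t=0}^{j}\alpha_t^{-1}$, that $\beta_{j+2}<\alpha_j$, and that $(1-\delta)/B\geq\beta_{j+2}k$ --- goes through exactly as stated. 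There is no gap.
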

\begin{proof} \textbf{Case 1.} Assume that $c_{j+2}\geq \beta_{j+2} k$. 

If $b_{j+1}\geq \varepsilon k$, then statement 1 holds. 
Thus, suppose that $b_{j+1}\leq \varepsilon k$. Then we fall into the assumptions of Lemma \ref{eigenvalues-approximation} with $i = j+1$. Hence, the zero-weight spectral radius $\xi$ of $X$ satisfies $$\xi\leq k (1-\min(\alpha_j, \beta_{j+2})+2(d+2)^2\varepsilon^{\frac{1}{d+1}})\leq k(1-(1-\delta)\beta_{j+2}).$$
Note, that by definition, $\beta_{j+2}<\alpha_j$, so $\min(\alpha_{j}, \beta_{j+2}) = \beta_{j+2}$.

\textbf{Case 2.} Assume $\varepsilon k\leq c_{j+2}\leq \beta_{j+2} k$. Then, by Eq.~\eqref{eq:beta-def} and Eq.~\eqref{eq:epsilon-def},
 $$\beta_{j+2} \leq \left(\frac{\alpha_j-5\varepsilon}{\alpha_j-\varepsilon}\right)\left(\sum\limits_{t=1}^{j+1}\frac{1}{\alpha_{t-1}}\right)^{-1}-2\varepsilon.$$  
 Then, by Lemma \ref{b-c-ineq} for $C = \alpha_j/\varepsilon$, we get $b_{j+1}\geq \varepsilon k$.

\textbf{Case 3.} Finally, assume that $c_{j+2}\leq \varepsilon k$. Then,  since by Eq.~\eqref{eq:epsilon-def}, 
\[0<\alpha_{j+1} \leq \left(\frac{\alpha_j-5\varepsilon}{\alpha_j-\varepsilon} - \varepsilon \sum\limits_{t=1}^{j+1}\frac{1}{\alpha_{t-1}}\right)\left( \sum\limits_{t=1}^{\left\lceil \frac{j+2}{2} \right\rceil}\frac{1}{\alpha_{t-1}}+\sum\limits_{t=1}^{\left\lfloor \frac{j+2}{2}\right\rfloor}\frac{1}{\alpha_{t-1}} \right)^{-1},
\]
Lemma \ref{b-c-ineq} for $C \geq  \alpha_j/\varepsilon$ implies $b_{j+1}\geq \alpha_{j+1}k$.
\end{proof}

As an immediate corollary we get a lower bound on $b_i$ for $i\leq t$ if $c_t\leq \varepsilon k$ is small. In the Appendix (Sec.~\ref{sec:appendix}) we give explicit lower bounds for elements of $BE(\delta)$ and $FE(\delta)$ sequences. Another corollary states that we can bound each eigenvalue of $X$ if $c_d$ is small.

\begin{corollary}\label{cor:b-inequality} Let $X$ be a distance-regular graph of diameter $d\geq 2$. Fix any $0<\delta<1$. Let $\widehat{\alpha} = (\alpha_i)_{i=0}^{\infty}$ be the $FE(\delta)$ sequence and $\varepsilon$ be $(\delta, d)$-compatible.

Assume that $c_{t}\leq \varepsilon k$ for some $t\leq d$, then $b_{i}\geq \alpha_i k$ for any $0\leq i\leq t-1$.
\end{corollary}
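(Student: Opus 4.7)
The plan is to do induction on $i$, for $0 \leq i \leq t-1$. The base case $i=0$ is immediate from $b_0 = k$ and $\alpha_0 = 1$.

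For the inductive step, fix $j$ with $0 \leq j \leq t-2$ and assume $b_i \geq \alpha_i k$ for all $0 \leq i \leq j$. I will apply Proposition~\ref{bound-on-b} with this $j$ and the truncated sequence $\widehat{\alpha}^{(j)} = (\alpha_i)_{i=0}^{j}$ drawn from the full $FE(\delta)$ sequence. To invoke it I need to verify three hypotheses. First, since $j \leq t-2 \leq d-2$, the $(\delta,d)$-compatibility of $\varepsilon$ immediately specializes to $(\delta, j, \widehat{\alpha}^{(j)}, d)$-compatibility, because the inequalities defining compatibility at level $j$ depend only on $\alpha_0, \ldots, \alpha_j$ and on $\beta_{j+2}$ (which itself is a function of those same $\alpha_t$). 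Second, by monotonicity of $(c_i)$, the assumption $c_t \leq \varepsilon k$ together with $j+1 \leq t-1$ yields $c_{j+1} \leq c_t \leq \varepsilon k$. Third, the bounds $b_i \geq \alpha_i k$ for $0 \leq i \leq j$ are precisely the inductive hypotheses.

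Proposition~\ref{bound-on-b} then offers three alternatives. The key observation is that monotonicity of $(c_i)$ combined with $j+2 \leq t$ also yields $c_{j+2} \leq c_t \leq \varepsilon k$. Inspecting the proof of that proposition, the regime $c_{j+2} \leq \varepsilon k$ corresponds exactly to the third alternative, whose conclusion is $b_{j+1} \geq \alpha_{j+1} k$. This closes the induction and gives $b_i \geq \alpha_i k$ for all $0 \leq i \leq t-1$.

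There is no substantive obstacle here; the real content is already packaged into the growth-induced tradeoff (Theorem~\ref{b-c-ineq}) and Proposition~\ref{bound-on-b}. The only delicate bookkeeping point is making sure that the third branch of the proposition is forced at every inductive step, and this follows from propagating the single assumption $c_t \leq \varepsilon k$ through the monotonicity of $(c_i)$ to bound both $c_{j+1}$ and $c_{j+2}$ simultaneously.
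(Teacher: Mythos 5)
Your induction is correct and is exactly the intended reading: the paper states the corollary as immediate from Proposition~\ref{bound-on-b}, and your argument fleshes out that step by propagating $c_t\leq\varepsilon k$ via monotonicity to $c_{j+1}$ and $c_{j+2}$ and invoking Case 3 of the proposition at each level $j$. The one bookkeeping point you flag (that the statement's alternatives are not literally exclusive at the boundary $c_{j+2}=\varepsilon k$) is handled correctly by appealing to the proof of Case 3 rather than the trichotomy as stated.
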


\begin{corollary} Fix any $0<\delta<1$. Let $X$ be a  distance-regular graph of diameter $d\geq 2$.  Denote by $\theta_0>\theta_1>\ldots >\theta_d$ all distinct eigenvalues of $X$.  Let $\widehat{\alpha} = (\alpha_i)_{i=0}^{\infty}$ be the $FE(\delta)$ sequence and $\varepsilon$ be $(\delta, d)$-compatible. 

Assume that $c_d\leq \varepsilon k$, then $\theta_i\leq (1-(1-\delta)\alpha_{d-i})k$ for any $1\leq i\leq d$. 

\end{corollary}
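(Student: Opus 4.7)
The plan is to convert the structural bound $b_j \geq \alpha_j k$ (given by Corollary~\ref{cor:b-inequality}) into individual upper bounds on the $\theta_i$'s by passing to the symmetrized intersection matrix and applying Cauchy interlacing. Since $(c_i)$ is non-decreasing, $c_d \leq \varepsilon k$ gives $c_i \leq \varepsilon k$ for all $1 \leq i \leq d$, so by Corollary~\ref{cor:b-inequality} we have $b_j \geq \alpha_j k$ for $0 \leq j \leq d-1$, and hence $a_j = k - b_j - c_j \leq (1-\alpha_j)k$ on the same range, while $a_d = k - c_d \in [(1-\varepsilon)k, k]$ is the unique diagonal entry close to $k$. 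Because $(\alpha_j)$ is strictly decreasing, the upper bounds on $a_0, \ldots, a_{d-1}$ strictly increase in $j$.

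I would then pass to the symmetrization $\widetilde T = D T D^{-1}$, where $D = \mathrm{diag}(\sqrt{k_0}, \ldots, \sqrt{k_d})$. This is a symmetric tridiagonal matrix sharing the spectrum $\theta_0 > \theta_1 > \ldots > \theta_d$ with $T$, with diagonal $(a_0, \ldots, a_d)$ and off-diagonals $\sqrt{b_j c_{j+1}} \leq \sqrt{\varepsilon}\, k$. For each $1 \leq i \leq d$, let $\widetilde T^{(i)}$ denote the top-left $(d+1-i) \times (d+1-i)$ principal submatrix of $\widetilde T$, obtained by removing the last row and column $i$ times. Iterating Cauchy's interlacing theorem yields $\theta_i \leq \lambda_0(\widetilde T^{(i)})$, the largest eigenvalue of $\widetilde T^{(i)}$. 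Writing $\widetilde T^{(i)} = D^{(i)} + E^{(i)}$ with $D^{(i)} = \mathrm{diag}(a_0, \ldots, a_{d-i})$, a Gershgorin row-sum estimate gives $\|E^{(i)}\|_{\mathrm{op}} \leq 2\sqrt{\varepsilon}\, k$, and Weyl's inequality combined with the monotonicity of $(\alpha_j)$ yields
\[
\theta_i \;\leq\; \max_{0 \leq j \leq d-i} a_j + \|E^{(i)}\|_{\mathrm{op}} \;\leq\; (1-\alpha_{d-i})k + 2\sqrt{\varepsilon}\, k.
\]

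It remains to verify that $(\delta, d)$-compatibility of $\varepsilon$ forces $2\sqrt{\varepsilon} \leq \delta\, \alpha_{d-i}$ for every $1 \leq i \leq d$; by monotonicity of $(\alpha_j)$, the binding case is $i = 1$, requiring $2\sqrt{\varepsilon} \leq \delta\, \alpha_{d-1}$. The second clause of compatibility gives $\varepsilon \leq (\beta_d\, \delta/(2(d+2)^2))^{d+1}$, which decays like $\delta^{d+1}$, while the target asks only for $\varepsilon \lesssim \delta^2$; for $d \geq 2$ this is comfortable, modulo a routine monotonicity check on the recurrences defining $(\alpha_j)$ and $(\beta_j)$ (using $\beta_d \leq 1$). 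Plugging in then gives $\theta_i \leq (1-\alpha_{d-i})k + \delta\, \alpha_{d-i}\, k = (1-(1-\delta)\alpha_{d-i})k$, as claimed. The only real subtlety is this final step: compatibility is phrased in the Ostrowski scale $\varepsilon^{1/(d+1)}$ used throughout Section~\ref{sec:spec-approx}, whereas the interlacing/Weyl argument exploits the sharper scale $\sqrt{\varepsilon}$ afforded by the symmetrization, so the two scales must be reconciled explicitly.
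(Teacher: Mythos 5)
Your proposal is correct in its key ideas but takes a genuinely different route from the paper. The paper's one-line proof invokes Lemma~\ref{eigenvalues-approximation}, which works on the (non-symmetric) intersection matrix $T(X)$ directly via Ostrowski's perturbation theorem (Theorem~\ref{matrix-approximation}); the resulting error term is $2(d+2)^2\varepsilon^{1/(d+1)}k$, whose exponent $1/(d+1)$ is an unavoidable artifact of Ostrowski's bound for general matrices. You instead symmetrize $T$ to the Jacobi matrix $\widetilde T$, apply Cauchy interlacing to leading principal submatrices to isolate each $\theta_i$, and then use a Gershgorin/Weyl estimate; this yields the much sharper error $O(\sqrt{\varepsilon}\,k)$, available precisely because symmetrization places you in the Hermitian setting where the perturbation enters linearly. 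This is not merely cosmetic: with the paper's coarser $\varepsilon^{1/(d+1)}$ bound, closing the argument requires $\beta_d\leq\alpha_{d-1}$, and a direct computation (e.g.\ at $\delta=0$: $\beta_3=1/3$ versus $\alpha_2=1/4$, and $\beta_2=1$ versus $\alpha_1=1/2$) shows this fails for $d=2,3$, so the Ostrowski route does not obviously deliver the stated conclusion for small diameters. Your $\sqrt{\varepsilon}$ bound comfortably repairs this. The one place you should be more explicit is the final reconciliation: you need $2\sqrt{\varepsilon}\leq\delta\,\alpha_{d-1}$ to follow from the $(\delta,d)$-compatibility inequality $2(d+2)^2\varepsilon^{1/(d+1)}\leq\beta_d\delta$, and you describe this as ``comfortable'' and ``routine.'' It is routine for $\delta$ bounded away from $1$ (the regime the paper actually uses; cf.\ Lemma~\ref{lem:estimation-alpha}, which assumes $\delta\leq 1/9$, and Remark~\ref{rem:eta-bound}, which fixes $\delta=1/9$), since then $\alpha_{d-1}$ is bounded below by an explicit function of $d$ while $\varepsilon$ is forced down to $\delta^{d+1}/\mathrm{poly}(d)^{d+1}$. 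But the corollary as stated allows any $\delta\in(0,1)$, and as $\delta\to 1$ the quantity $\alpha_{d-1}$ decays while $\delta^{d-1}\to 1$, so the inequality is not automatic; you should either restrict $\delta$ or carry the $(1-\delta)$ factors through the lower bound on $\alpha_{d-1}$ to confirm that compatibility still forces $\varepsilon$ small enough.
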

\begin{proof}
Follows from Corollary~\ref{cor:b-inequality} and Lemma~\ref{eigenvalues-approximation}.
\end{proof}

The next theorem is the key ingredient of the proof of our main result on the motion of distance-regular graphs (Theorem~\ref{thm:main-motion-intr}). It says that for a primitive distance-regular graph either the minimal distinguishing number is linear, or the spectral gap is large.

\begin{theorem}\label{thm:main-spectral-gap} For every $d\geq 2$ there exist $\varepsilon = \varepsilon(d)>0$ and $\eta = \eta(d)>0$ such that for any  distance-regular graph $X$ of diameter $d$ one of the following is true.
\begin{enumerate}
\item For some $0\leq i\leq d-1$, we have $b_i\geq \varepsilon k$ and $c_{i+1}\geq \varepsilon k$.
\item The zero-weight spectral radius of $X$ satisfies 
$ \xi\leq k(1-\eta)$.
\end{enumerate}
\end{theorem}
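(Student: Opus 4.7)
The plan is to run an induction on $j \in \{0, 1, \ldots, d-2\}$ that feeds the trichotomy of Proposition~\ref{bound-on-b} into itself, using the $FE(\delta)$ and $BE(\delta)$ sequences as the thresholds that close the recursion. Fix $\delta \in (0,1)$, say $\delta = 1/2$, and let $\widehat{\alpha} = (\alpha_i)$ be the $FE(\delta)$ sequence and $\widehat{\beta} = (\beta_i)$ the $BE(\delta)$ sequence; both depend only on $d$. Choose $\varepsilon = \varepsilon(d) > 0$ small enough to be $(\delta, d)$-compatible and to additionally satisfy $2(d+2)^2 \varepsilon^{1/(d+1)} \leq \alpha_{d-1}/2$; this is possible because both conditions are strict at $\varepsilon = 0$. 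Set $\eta = \eta(d) := \min\bigl(\alpha_{d-1}/2,\ (1-\delta)\beta_d\bigr) > 0$.

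Suppose alternative (1) of the theorem fails, so for every $0 \leq i \leq d-1$ either $b_i < \varepsilon k$ or $c_{i+1} < \varepsilon k$. Since $b_0 = k \geq \varepsilon k$, the $i = 0$ instance forces $c_1 < \varepsilon k$; note that if $k \leq 1/\varepsilon$ instead, then $c_1 = 1 \geq \varepsilon k$ and alternative (1) already holds at $i = 0$, so we may assume $k > 1/\varepsilon$. I will prove by induction on $j \in \{0, 1, \ldots, d-2\}$ the hypothesis $H_j$: either alternative (2) of the theorem holds, or $b_i \geq \alpha_i k$ for all $0 \leq i \leq j$ and $c_{j+1} \leq \varepsilon k$. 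The base case $H_0$ is immediate from $\alpha_0 = 1$.

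For the inductive step, suppose $H_j$ holds in its second form. Apply Proposition~\ref{bound-on-b} with the sequence $(\alpha_0, \ldots, \alpha_j)$ and parameters $\delta, j$; the required $(\delta, j, \widehat{\alpha}, d)$-compatibility follows from $(\delta, d)$-compatibility of $\varepsilon$, since both defining inequalities in Eq.~\eqref{eq:epsilon-def} are monotonically weakened for smaller $j$ (the first term $(\alpha_j - 5\varepsilon)/(\alpha_j - \varepsilon)$ grows with $\alpha_j$, the truncated sum is smaller, and $\widehat{\beta}$ is decreasing). The trichotomy yields three cases: (a) $b_{j+1} \geq \varepsilon k$ and $c_{j+2} \geq \varepsilon k$, contradicting our assumption that (1) fails at index $j+1$; (b) $\xi \leq k(1 - (1-\delta)\beta_{j+2}) \leq k(1-\eta)$, giving alternative (2) via $\beta_{j+2} \geq \beta_d$; or (c) $b_{j+1} \geq \alpha_{j+1} k$ and $c_{j+2} \leq \varepsilon k$, which is exactly $H_{j+1}$.

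If case (c) occurs at every step through $j = d-2$, we arrive at $b_{d-1} \geq \alpha_{d-1} k$ and $c_d \leq \varepsilon k$. Since $b_d = 0$ and $c_{d+1} = k$ by the convention of Lemma~\ref{eigenvalues-approximation}, we apply that lemma at $i = d$ to conclude
\[ \xi \leq k\bigl(1 - \alpha_{d-1} + 2(d+2)^2 \varepsilon^{1/(d+1)}\bigr) \leq k\bigl(1 - \alpha_{d-1}/2\bigr) \leq k(1-\eta), \]
again yielding alternative (2). The main conceptual work — matching the recursive structure of Proposition~\ref{bound-on-b} against the tradeoff inequality of Theorem~\ref{b-c-ineq} and the perturbation bound of Lemma~\ref{eigenvalues-approximation} — has already been absorbed into the definitions of $FE$, $BE$, and $(\delta, d)$-compatibility, so the only remaining obstacle is bookkeeping: verifying that the compatibility inequalities propagate across all $j \leq d-2$ and that $\eta$ is bounded below by a positive constant depending only on $d$, both of which are handled above.
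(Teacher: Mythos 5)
Your proof is correct and takes essentially the same approach as the paper: you iterate Proposition~\ref{bound-on-b} along the $FE(\delta)$/$BE(\delta)$ thresholds to propagate lower bounds on $b_i$ (which the paper packages as Corollary~\ref{cor:b-inequality} after locating the transition index $i$ for the sequence $(c_t)$ up front), and then close with Lemma~\ref{eigenvalues-approximation} at $i=d$. Your extra requirement $2(d+2)^2\varepsilon^{1/(d+1)}\le\alpha_{d-1}/2$ and the resulting $\eta=\min(\alpha_{d-1}/2,(1-\delta)\beta_d)$ is a harmless, slightly more careful variant of the paper's choice $\eta=(1-\delta)\min(\alpha_{d-1},\beta_d)$.
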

\begin{proof} Fix $\delta \in (0,1)$. Let $\widehat{\alpha} = (\alpha_i)_{i=0}^{\infty}$ be the $FE(\delta)$ sequence and $\widehat{\beta} = (\beta_i)_{i=2}^{\infty}$ be the $BE(\delta)$ sequence and $\varepsilon$ be $(\delta, d)$-compatible. Set $\eta = (1-\delta)\min(\alpha_{d-1}, \beta_{d})$.

Define $c_{d+1} = k$. Let $i$ be the unique index such that $c_{i+1}> \varepsilon k$, while $c_{i}\leq \varepsilon k$. If $b_i\geq \varepsilon k$, then first statement is true. So assume that $b_{i}\leq \varepsilon k$. By Corollary~\ref{cor:b-inequality}, for every $j\leq i-1$ we have $b_{j}\geq \alpha_j k$. Thus, by Proposition~\ref{bound-on-b}, if $i\leq d-1$, then 
\[ \xi \leq k(1-(1-\delta)\beta_{i+1})\leq k\left(1-(1-\delta)\beta_{d}\right)\leq k(1-\eta).\] 
If $i=d$, using that $b_j\geq \alpha_j k\geq \alpha_{d-1}k$ for $j\leq d-1$, we get 
\[  \xi \leq k(1-(1-\delta)\alpha_{d-1})\leq  k(1-\eta).\]
\end{proof}

\begin{remark}\label{rem:eta-bound} In the theorem above one can set 
\[\displaystyle{\eta = \frac{1}{4}d^{-(1+\log_2 d)}} \quad \text{ and } \quad \displaystyle{\varepsilon = 200^{-(d+1)}d^{-(d+1)(\log_2(d)+3)}}.\] 
\end{remark}
\begin{proof}  The proof is based on the explicit bound on the elements of $FE(\delta)$, $BE(\delta)$ and $EPS_{\delta}$ sequences given in the Appendix (Lemmas~\ref{lem:estimation-alpha} and~\ref{lem:eps-estimation}). Note that in Theorem~\ref{thm:main-spectral-gap} $\eta$ is chosen as  $\eta = (1-\delta)\min(\alpha_{d-1}, \beta_d)$. Fix $\delta = 1/9$. Using Lemma~\ref{lem:estimation-alpha} we get 
\[\displaystyle{\alpha_{d-1}\geq  \frac{(1-\delta)^2}{2} (d-1)^{-\log_2(d-1)}}\quad  \text{and} \quad \displaystyle{\beta_{d}\geq \frac{(1-\delta)^3}{2(d-1)} (d-2)^{-\log_2(d-2)}}.\] 
Hence, we obtain $\displaystyle{\eta\geq \frac{1}{4}d^{-(1+\log_2 d)}}$. Moreover, by Lemma~\ref{lem:eps-estimation}, $\displaystyle{\varepsilon = 200^{-(d+1)}d^{-(d+1)(\log_2(d)+3)}}$ is  $(\delta, d)$-compatible.
\end{proof}

Finally, we prove our main theorem on spectral expansion.

\begin{theorem}\label{thm:main-expansion} For every $d\geq 2$ there exist $\epsilon = \epsilon(d)>0$ and $\eta = \eta(d)>0$ such that the following holds. Let $X$ be a distance-regular graph of diameter $d$. If  $k_t\geq (1-\epsilon)n$ for some $t\in [d]$, then the zero-weight spectral radius of $X$ satisfies $\xi \leq k(1-\eta)$.
\end{theorem}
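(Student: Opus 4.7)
The plan is to deduce Theorem~\ref{thm:main-expansion} directly from Theorem~\ref{thm:main-spectral-gap}. Let $\varepsilon_0 = \varepsilon_0(d)$ and $\eta_0 = \eta_0(d)$ be the constants furnished by that theorem, and set $\eta := \eta_0$. I will choose $\epsilon = \epsilon(d)$ small enough that the hypothesis $k_t\geq (1-\epsilon)n$ excludes the first alternative of Theorem~\ref{thm:main-spectral-gap} --- namely the existence of an index $0\leq i\leq d-1$ with $b_i\geq \varepsilon_0 k$ and $c_{i+1}\geq \varepsilon_0 k$ --- so the second alternative, the spectral bound $\xi\leq(1-\eta_0)k$, must hold.

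The first step is to convert the dominant-distance hypothesis into upper bounds on the intersection numbers at level $t$. Since $\sum_{j=0}^{d}k_j = n$, we get $k_j\leq \epsilon n$ for every $j\neq t$. Using the standard identities $k_t b_t = k_{t+1}c_{t+1}$ (with the convention $b_d=0$ when $t=d$) and $k_t c_t = k_{t-1}b_{t-1}$, together with the trivial bounds $c_{t+1},b_{t-1}\leq k$, this yields $b_t\leq \epsilon k/(1-\epsilon)$ and $c_t\leq \epsilon k/(1-\epsilon)$. The monotonicity $b_{j+1}\leq b_j$ and $c_{j+1}\geq c_j$ then propagates these to $b_j\leq \epsilon k/(1-\epsilon)$ for every $j\geq t$ and $c_j\leq \epsilon k/(1-\epsilon)$ for every $1\leq j\leq t$.

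Next I would fix $\epsilon<\varepsilon_0/(1+\varepsilon_0)$, so that $\epsilon k/(1-\epsilon)<\varepsilon_0 k$, and verify that case~1 of Theorem~\ref{thm:main-spectral-gap} cannot hold. For any $0\leq i\leq d-1$, exactly one of $i\geq t$ or $i+1\leq t$ occurs: in the former case $b_i\leq b_t<\varepsilon_0 k$, and in the latter case $c_{i+1}\leq c_t<\varepsilon_0 k$. Either way one of the two conditions defining case~1 at the index $i$ fails, so case~2 must apply, yielding $\xi\leq(1-\eta_0)k$.

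The main subtlety, and the step requiring the most care, is the boundary index $i=0$, where $c_1 = 1$ is not literally small and one might fear needing a separate analysis for small degree $k$ or for the value $t=1$. However, the bound $c_t\leq\epsilon k/(1-\epsilon)$ combined with $c_t\geq c_1 = 1$ already forces $k\geq(1-\epsilon)/\epsilon>1/\varepsilon_0$ by the choice of $\epsilon$, so $c_1<\varepsilon_0 k$ holds automatically and the argument proceeds uniformly for every $t\in[d]$.
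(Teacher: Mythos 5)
Your proof is correct, and it takes essentially the same route as the paper: reduce to Theorem~\ref{thm:main-spectral-gap}, fix $\epsilon$ around $\varepsilon_0/(1+\varepsilon_0)$, and show the dominant-distance hypothesis is incompatible with the first alternative. Where you differ is only in the direction of the contrapositive. The paper assumes alternative~1, introduces the smallest $j$ with $c_{j+1}\geq\varepsilon_0 k$, argues that the maximal $k_s$ satisfies $s\geq j$ and that a neighboring sphere $k_{s\pm1}$ is at least $\varepsilon_0 k_s$, and concludes $k_s<(1-\epsilon)n$. You instead start from $k_t\geq(1-\epsilon)n$, convert it via $k_t b_t = k_{t+1}c_{t+1}$ and $k_t c_t = k_{t-1}b_{t-1}$ into the bounds $b_t,c_t\leq \epsilon k/(1-\epsilon)$, propagate by monotonicity, and note that for each $i\leq d-1$ one of $b_i$, $c_{i+1}$ is below $\varepsilon_0 k$. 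This is a bit more direct and avoids the paper's detour through the ``maximal distance degree'' and the case split on whether $s=j$ or $s>j$. Your final remark about the index $i=0$ is sound but actually unnecessary: once $c_t\leq\epsilon k/(1-\epsilon)<\varepsilon_0 k$ is established, $c_1\leq c_t<\varepsilon_0 k$ follows with no further work; the auxiliary deduction $k>1/\varepsilon_0$ is implicit in the same inequality chain. No gaps.
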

\begin{proof} Let $\varepsilon = \varepsilon(d)$ and $\eta = \eta(d)$ be constants provided by Theorem~\ref{thm:main-spectral-gap}. Assume that for some $i$ we have $b_i\geq \varepsilon k$ and $c_{i+1}\geq \varepsilon k$. Let $j$ be the smallest index for which $c_{j+1}\geq \varepsilon k$, then by monotonicity $b_j\geq \varepsilon k$.

If $t\leq j-1$, then $\displaystyle{k_{t+1} = \frac{b_t}{c_{t+1}}k_t\geq \frac{b_{j-1}}{c_{j}}k_t> \frac{\varepsilon k}{\varepsilon k}k_t = k_t}$. Therefore, if $k_s$ is maximal, then $s\geq j$. Observe that $k_{j+1}\geq b_j k_j/c_{j+1}\geq \varepsilon k k_j/k = \varepsilon k_j$. Moreover, if $t\geq j$, then
\begin{equation}\label{eq:degree-growth}
 k_{t+1} = \frac{b_t}{c_{t+1}}k_t\leq \frac{k}{\varepsilon k}k_t = \frac{k_t}{\varepsilon}.
 \end{equation} 
Let $k_s$ be the maximal distance degree. Note that $j<d$ as $b_j\geq \varepsilon k$. Thus, if $s = j$, then $k_{s+1}\geq \varepsilon k_s$, else $s>j$ and $k_{s-1}\geq \varepsilon k_s$, by Eq.~\eqref{eq:degree-growth}. Define $\epsilon = \epsilon(d) = \varepsilon/(1+\varepsilon)$. Hence, 
\[k_s =  n-\sum\limits_{t\neq s} k_t< n - \varepsilon k_s \quad \Rightarrow \quad k_s< \left(1-\frac{\varepsilon}{1+\varepsilon}\right)n = (1-\epsilon)n.\]
Therefore, if $k_s\geq (1-\epsilon)n$ for some $s$, then there is no $i$ such that $b_i\geq \varepsilon k$ and $c_{i+1}\geq \varepsilon k$. Hence, by Theorem~\ref{thm:main-spectral-gap}, $\xi\leq k(1-\eta)$.   
\end{proof}

\section{Motion of primitive non-geometric distance-regular graphs}\label{sec-general}

Prior to proving our main result on motion of distance-regular graphs (Theorem~\ref{thm:main-motion-intr}) in Section~\ref{sec:main-motion}, we study the minimal distinguishing number of distance-regular graphs. In the cases, when either there is no dominant distance (Proposition~\ref{primitive-distinguish}), or when the degree of a vertex is linear in the number of vertices (Proposition~\ref{prop-k-big}), we show a lower bound on the minimal distinguishing number that is linear in the number of vertices.     

\subsection{Case of a large vertex degree}\label{sec-large-deg}

In this section we study distance-regular graphs with a large vertex degree. 

\begin{lemma}\label{max-min}
Let $X$ be a distance-regular graph of diameter $d\geq 2$.
\begin{enumerate}
\item The parameters of $X$ satisfy $k-\mu\leq 2(k-\lambda)$.
\item If $a_2\neq 0$, then they also satisfy $k-\lambda\leq 2(k-\mu)$.
\end{enumerate}
\end{lemma}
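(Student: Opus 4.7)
The plan is to reduce both inequalities to algebraic reformulations and then invoke Observation~\ref{obs:triang-ineq} on a carefully chosen triple of vertices. Using the identity $a_i+b_i+c_i = k$ together with $c_1=1$, $\lambda = a_1$, $\mu = c_2$, one has $k-\lambda = b_1+1$ and $k-\mu = a_2+b_2$. Plugging these in, statement~(1) is equivalent to $2\lambda \le k+\mu$, and statement~(2) is equivalent to $2\mu \le k+\lambda$. Both forms match exactly the conclusion of Observation~\ref{obs:triang-ineq} applied to a triple $(u;v,w)$ in which $|N(u,v)|,|N(u,w)|,|N(v,w)|$ take the values $\lambda,\lambda,\mu$ or $\mu,\mu,\lambda$, respectively.

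For part (1), I would pick any pair of vertices $v,w$ at distance $2$ in $X$ (such a pair exists because $d \ge 2$) and any common neighbor $u \in N(v)\cap N(w)$. The common neighbor exists because $\mu = c_2 \ge 1$: any geodesic of length~$2$ from $v$ to $w$ supplies one. Then Observation~\ref{obs:triang-ineq} on $(u;v,w)$ reads $\lambda+\lambda \le k+\mu$, which is exactly $k-\mu\le 2(k-\lambda)$.

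For part (2), I would use the hypothesis $a_2\neq 0$ to select a vertex $u$ together with two adjacent vertices $w_1,w_2$ both lying in $N_2(u)$; this is precisely what $a_2 \geq 1$ asserts (pick any $u$, any $w_1 \in N_2(u)$, and any $w_2 \in N(w_1)\cap N_2(u)$). Applying Observation~\ref{obs:triang-ineq} to $(u;w_1,w_2)$ yields $\mu+\mu \le k+\lambda$, i.e.\ $k-\lambda\le 2(k-\mu)$.

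There is no real obstacle here: the content is the observation that the two bilateral comparisons between $k-\lambda$ and $k-\mu$ are instances of a single triangle-type inequality, with the existence of the relevant triples being either automatic from $d\ge 2$ and connectivity (giving $\mu\ge 1$) or directly equivalent to the assumption $a_2\ne 0$.
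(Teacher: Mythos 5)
Your proof is correct and takes essentially the same route as the paper: both parts are read off as instances of Observation~\ref{obs:triang-ineq} applied to the very triples you describe (a common neighbor of two vertices at distance~$2$ for part~(1), and two adjacent vertices in $N_2(u)$ for part~(2), the latter existing precisely because $a_2\neq 0$). The algebraic rewritings $2\lambda\le k+\mu$ and $2\mu\le k+\lambda$ that you supply are a helpful sanity check but not a departure from the paper's argument.
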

\begin{proof}

The first statement follows from Observation~\ref{obs:triang-ineq} applied to vertices  $v$ and $w$ at distance 2 in $X$ and their common neighbor $u$. 

Suppose, that $a_2\neq 0$, then for a vertex $u$ there exist two adjacent vertices $v$ and $w$ at distance 2 from $u$. So, the second statement follows from Observation~\ref{obs:triang-ineq} as well. 
\end{proof}

Any pair of distinct vertices in a distance-regular graph has $\lambda$, or $\mu$, or $0$ common neighbors, if distance between them is 1, 2, or at least 3, correspondingly. Therefore, any pair of distinct vertices in a distance-regular graph is distinguished by at least $2(k-\max(\lambda, \mu))$ vertices. Combining this with the previous lemma we get the following bound.   

\begin{lemma}
Let $X$ be a distance-regular graph of diameter $d\geq 2$. Then any pair of distinct vertices is distinguished by at least $k-\mu$ vertices.
\end{lemma}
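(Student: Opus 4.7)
The plan is to combine the two pieces already assembled just before the lemma. The paragraph preceding the statement records the unconditional bound $D(u,v)\geq 2(k-\max(\lambda,\mu))$, obtained by observing that any two distinct vertices have either $\lambda$, $\mu$, or $0$ common neighbors according as their distance is $1$, $2$, or at least $3$. I would quote this as the starting point, then do a two-line case split on whether $\mu$ dominates $\lambda$ or vice versa.

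In the easy case $\mu\geq\lambda$, we have $\max(\lambda,\mu)=\mu$, so the preceding bound gives $D(u,v)\geq 2(k-\mu)\geq k-\mu$ (using only $k\geq \mu$). In the case $\mu<\lambda$, we instead get $D(u,v)\geq 2(k-\lambda)$. The desired inequality $2(k-\lambda)\geq k-\mu$ is the same as $k-\mu\leq 2(k-\lambda)$, which is precisely part (1) of Lemma~\ref{max-min} (the one that does not require $a_2\neq 0$, so it applies unconditionally). Combining the two cases yields $D(u,v)\geq k-\mu$ for every pair $u\neq v$.

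There is essentially no obstacle: the lemma is a direct corollary of the two facts already isolated. The only thing to be mindful of in the writeup is to invoke part~(1) of Lemma~\ref{max-min} rather than part~(2), so that the argument does not implicitly assume $a_2\neq 0$. No case analysis on $\dist(u,v)$ is needed beyond what is already folded into the $2(k-\max(\lambda,\mu))$ bound.
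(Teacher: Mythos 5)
Your proposal is correct and is essentially the same argument as the paper's: start from the symmetric-difference bound $D(u,v)\geq 2(k-\max(\lambda,\mu))$ and reduce to $k-\mu\leq 2(k-\lambda)$ via Lemma~\ref{max-min}. Your explicit case split and the observation that only part~(1) of Lemma~\ref{max-min} (which needs no hypothesis on $a_2$) is required is a small but welcome gain in precision over the paper's terser phrasing.
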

\begin{proof} Any two vertices $u,v\in X$ are distinguished by at least $|N(u)\bigtriangleup N(v)| = 2(k-|N(u)\cap N(v)|)$ vertices. Thus, by Lemma \ref{max-min}, we get $2(k-\max(\lambda,\mu))\geq k-\mu$.
\end{proof}

\begin{corollary}\label{linear-mu-bound}
Let $X$ be a distance-regular graph of diameter $d\geq 2$. Fix some constants $\gamma, \delta>0$. If $k>n\gamma$ and $\mu\leq (1-\delta)k$, then $\motion(X)\geq \gamma\delta n$. 
\end{corollary}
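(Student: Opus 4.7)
The plan is to chain together the preceding lemma with the Combinatorial tool (Lemma~\ref{obs1}). The preceding lemma already tells us that in any distance-regular graph of diameter $d\geq 2$, every pair of distinct vertices is distinguished by at least $k-\mu$ vertices. Thus $D_{\min}(X)\geq k-\mu$.

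Now I would invoke the two hypotheses. The assumption $\mu\leq (1-\delta)k$ gives
\[ D_{\min}(X)\geq k-\mu \geq k-(1-\delta)k = \delta k, \]
and the assumption $k>\gamma n$ upgrades this to $D_{\min}(X)>\gamma\delta n$. Applying Lemma~\ref{obs1} then yields
\[ \motion(X)\geq D_{\min}(X)\geq \gamma\delta n, \]
which is the desired conclusion.

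There is essentially no obstacle here: the corollary is a direct three-line consequence of the lemma immediately preceding it together with the Combinatorial tool. The substantive content lies in the preceding lemma (in turn depending on Lemma~\ref{max-min}, which combines the two triangle-inequality-type estimates of Observation~\ref{obs:triang-ineq} to bound $k-\mu$ by $2(k-\lambda)$), so the only thing to be careful about in writing the proof is to cite the correct bound $k-\mu$ rather than the weaker $2(k-\max(\lambda,\mu))$ bound that is mentioned in passing.
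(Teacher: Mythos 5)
Your proof is correct and is exactly the paper's argument: chain the preceding lemma (which gives $D_{\min}(X)\geq k-\mu$) with Lemma~\ref{obs1}, then substitute the two hypotheses $\mu\leq(1-\delta)k$ and $k>\gamma n$. Nothing to add.
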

\begin{proof}
The result follows from the lemma above and Lemma~\ref{obs1}. 
\end{proof}

Next, we bound $\mu$ and $\lambda$ away from $k$.

\begin{lemma}\label{min-est}
The parameters of a distance-regular graph of diameter $d\geq 2$ satisfy 
\begin{enumerate}
\item $\displaystyle{\min\left(\lambda, \mu\right)<k\cdot \left(1+\min\left(\frac{r-1}{d-1}, \left(\frac{r}{d}\right)^{\frac{1}{d-1}}\right)\right)^{-1}}$,
\item $\displaystyle{\mu <k\cdot \max\left(\frac{d-1}{r-1}, \left(\frac{d}{r}\right)^{\frac{1}{d-1}}\right)}$,
\end{enumerate}
where $\displaystyle{r = \frac{n-1}{k}}$.
\end{lemma}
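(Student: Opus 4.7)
The plan is to reduce both bounds to a single inequality for the ratio $t := b_1/c_2 = (k-\lambda-1)/\mu$. By the monotonicity $b_j \leq b_1$ and $c_{j+1} \geq c_2$ (valid for $j \geq 1$), every ratio $\rho_j := b_j/c_{j+1} = k_{j+1}/k_j$ is bounded by $t$. The telescoping identity $k_i = k\prod_{j=1}^{i-1}\rho_j$ then yields
\[
r - 1 \;=\; \sum_{i=2}^{d}\frac{k_i}{k} \;=\; \rho_1 + \rho_1\rho_2 + \ldots + \rho_1\rho_2\cdots\rho_{d-1} \;\leq\; t + t^2 + \ldots + t^{d-1}.
\]

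Next I would split on the size of $t$, which is precisely what generates the two terms of the $\min$. If $t \geq 1$, bounding every summand by $t^{d-1}$ (and absorbing the leading $1$) gives $r \leq d\,t^{d-1}$, hence $t \geq (r/d)^{1/(d-1)}$. If $t < 1$, bounding every summand by $t$ gives $r-1 \leq (d-1)t$, hence $t \geq (r-1)/(d-1)$. In either regime $t$ dominates one of the two candidate quantities, so
\[
t \;\geq\; X \;:=\; \min\!\bigl((r-1)/(d-1),\; (r/d)^{1/(d-1)}\bigr).
\]

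To close, I unpack $t \geq X$ as the clean inequality $\lambda + \mu X \leq k - 1 < k$. Dropping the $\lambda$ term gives $\mu X < k$, which is item (2) after rewriting $1/\min(A,B) = \max(1/A, 1/B)$ to match the stated form. For item (1), letting $m := \min(\lambda,\mu)$, I verify $m(1+X) \leq \lambda + \mu X < k$ by cases: if $m = \mu \leq \lambda$ then $m + mX = \mu + \mu X \leq \lambda + \mu X$, and if $m = \lambda < \mu$ then $m + mX = \lambda + \lambda X < \lambda + \mu X$. In either case $\min(\lambda,\mu) < k/(1+X)$, as required.

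There is no real technical obstacle; the argument is essentially a one-parameter calculation. The only design choices are (i) to collapse the whole sequence of ratios $\rho_1, \ldots, \rho_{d-1}$ to the single scalar $t = b_1/c_2$ using monotonicity for indices $\geq 1$, so that $r$ becomes a geometric-type series in one variable, and (ii) to notice that the bounds needed for the $t \geq 1$ and $t < 1$ regimes are exactly the two quantities $(r/d)^{1/(d-1)}$ and $(r-1)/(d-1)$ appearing in the final $\min$. Both choices are dictated by the form of the target inequality.
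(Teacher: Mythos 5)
Your proof is correct and follows the same route as the paper: both bound $k_{i+1}/k_i \leq b_1/c_2 =: t$ by monotonicity of the intersection numbers, derive $r-1 \leq t + \cdots + t^{d-1}$, split on $t \gtrless 1$ to obtain $t \geq \min\bigl((r-1)/(d-1),\,(r/d)^{1/(d-1)}\bigr)$, and then unpack $\lambda + \mu X \leq k-1$ into the two claimed bounds. Your presentation in terms of $r$ rather than $n$ is cosmetically cleaner but logically identical to the paper's.
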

\begin{proof}
Recall that the sequences $(b_i)$ and $(c_i)$ are monotone, so $b_i\leq b_1 = k-\lambda-1$ and $c_{i+1}\geq \mu$ for $1\leq i\leq d-1$.
Thus, $\displaystyle{k_{i+1}\leq k_i\frac{k-\lambda-1}{\mu}}$. Hence,  $$n = \sum\limits_{i=0}^{d}k_i\leq 1+\sum\limits_{i=0}^{d-1}k\left(\frac{k-\lambda-1}{\mu}\right)^i.$$
If $k-\lambda-1\leq \mu$, then $\displaystyle{n\leq 1+k+(d-1)k\frac{k-\lambda-1}{\mu}}$, i.e., 
$$\displaystyle{\left(\frac{r-1}{d-1}\right)\mu  +\lambda+1\leq k} \quad \Rightarrow \quad \min(\lambda, \mu)< k\left(1+\frac{r-1}{d-1}\right)^{-1} \quad \text{and} \quad \mu<\frac{d-1}{r-1}k.$$ 
Otherwise, we have $\displaystyle{n\leq 1+dk\left(\frac{k-\lambda-1}{\mu}\right)^{d-1}}$, i.e., 
$$\displaystyle{\left(\frac{r}{d}\right)^{\frac{1}{d-1}}\mu+\lambda+1\leq k}\quad \Rightarrow \quad \min(\lambda, \mu)< k\left(1+\left(\frac{r}{d}\right)^{\frac{1}{d-1}}\right)^{-1}\quad \text{and}\quad \mu<\left(\frac{d}{r}\right)^{\frac{1}{d-1}}k .$$
\end{proof}

\begin{corollary}\label{min-lambda-d3}
Let $X$ be a distance-regular graph of diameter $d\geq 3$. Then $$\min(\lambda, \mu)\leq \frac{d-1}{d}k.$$
\end{corollary}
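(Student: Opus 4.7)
The plan is to invoke Lemma~\ref{min-est} after establishing the lower bound $r > 2$, where $r := (n-1)/k$.

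First I would argue that $n \geq 2k + 2$ via a neighborhood argument. Since $d \geq 3$, there exist vertices $v, w \in V(X)$ with $\dist(v, w) = 3$. Any common neighbor of $v$ and $w$ would yield a path of length $2$ between them, contradicting $\dist(v,w) = 3$; hence $N(v) \cap N(w) = \emptyset$. Moreover, neither $v$ nor $w$ lies in $N(v) \cup N(w)$ by the same distance constraint. Therefore $V(X)$ contains the $2k+2$ disjoint vertices $\{v, w\} \cup N(v) \cup N(w)$, so $n \geq 2k + 2$ and $r \geq 2 + 1/k > 2$.

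Next I would verify the two conditions on $r$ needed to apply Lemma~\ref{min-est}. From $r > 2$ we get $\frac{r-1}{d-1} > \frac{1}{d-1}$. For the second term, $\left(\frac{r}{d}\right)^{1/(d-1)} \geq \frac{1}{d-1}$ is equivalent to $r \geq d(d-1)^{-(d-1)}$, which for $d \geq 3$ is automatic since $d(d-1)^{-(d-1)} \leq 3/4 < 2 < r$. Thus
\[
\min\!\left(\frac{r-1}{d-1},\; \left(\frac{r}{d}\right)^{1/(d-1)}\right) > \frac{1}{d-1},
\]
and Lemma~\ref{min-est} gives
\[
\min(\lambda, \mu) < \frac{k}{1 + \frac{1}{d-1}} = \frac{d-1}{d}\, k,
\]
which is even slightly stronger than the stated bound.

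I do not foresee any substantive obstacle. The only conceptual step is noticing that two vertices at distance exactly $3$ must have disjoint neighborhoods and are themselves outside the union of those neighborhoods; this yields $n \geq 2k+2$ as soon as $d \geq 3$. Once $r > 2$ is in hand, the corollary is a direct substitution into Lemma~\ref{min-est}, with $d \geq 3$ ensuring that the ``$(r/d)^{1/(d-1)}$'' branch of the outer minimum is never binding.
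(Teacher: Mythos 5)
Your proposal is correct and follows essentially the same route as the paper: invoke Lemma~\ref{min-est} after observing that $r = (n-1)/k \geq 2$ when $d \geq 3$. The paper states $r \geq 2$ without elaboration; your disjoint-neighborhoods argument for $n \geq 2k+2$ and the explicit check that the $(r/d)^{1/(d-1)}$ branch is never the binding minimum simply fill in details the paper leaves to the reader.
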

\begin{proof} Since the diameter $d\geq 3$, we have $\displaystyle{2\leq r = \frac{n-1}{k}}$, so the result follows from the previous lemma. 
\end{proof}

\begin{lemma}\label{a2-lambda}
Let $X$ be a distance-regular graph of diameter $d\geq 3$ and $a_2 = 0$. Then $\lambda = 0$.
\end{lemma}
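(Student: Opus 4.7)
My plan is to argue by contradiction: assume $\lambda\geq 1$, so there is a triangle $v,v_1,v_2$ in $X$, and use $d\geq 3$ to locate a vertex at distance $3$ from $v$ and distance $2$ from $v_1$ that forces a configuration in conflict with $a_2=0$. The underlying intuition is that $a_2=0$ strongly couples the ``distance from $v$'' and ``distance from $v_1$'' functions for any adjacent pair $v,v_1$: every neighbor of $v_1$ must be at distance $\neq 2$ from any $z\in N_2(v_1)$. A triangle together with a vertex at distance $3$ should violate this kind of local rigidity.

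Concretely, the plan proceeds as follows. First, pick adjacent $v_1,v_2\in N(v)$ that are themselves adjacent (possible since $\lambda\geq 1$). Next, I will produce a vertex $w$ with $\dist(v,w)=3$ and $\dist(v_1,w)=2$. To guarantee such a $w$ exists, I will double-count the pairs $(v_1,w)$ with $v_1\in N(v)$, $\dist(v,w)=3$, $\dist(v_1,w)=2$ in two ways to obtain the identity $k\cdot p^1_{3,2} = k_3\cdot c_3$; since $d\geq 3$ gives $b_2>0$ and hence $k_3>0$, and since $c_3\geq c_2=\mu\geq c_1=1$, we get $p^1_{3,2}>0$. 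Third, applying $a_2=0$ to the distance-$2$ pair $(v_1,w)$ forces $\dist(v_2,w)\neq 2$; combined with $v_2\sim v_1$ (giving $\dist(v_2,w)\leq 3$) and $v_2\sim v$ (giving $\dist(v_2,w)\geq 2$), this yields $\dist(v_2,w)=3$. Fourth, select any common neighbor $y$ of $v_1$ and $w$, which exists because $\mu=c_2\geq 1$; the estimates $y\sim v_1\sim v$ and $y\sim w$ with $\dist(v,w)=3$ pin down $\dist(y,v)=2$. Fifth, apply $a_2=0$ to the pair $(v,y)$: the neighbor $v_2$ of $v$ must satisfy $\dist(v_2,y)\neq 2$, but $y\sim v_1\sim v_2$ forces $\dist(v_2,y)\leq 2$, and $y\neq v_2$ (since $y\sim w$ while $\dist(v_2,w)=3$). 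Hence $y\sim v_2$. But then $v_2\sim y\sim w$ gives $\dist(v_2,w)\leq 2$, contradicting step three.

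The main obstacle I anticipate is the very first auxiliary claim, namely producing the vertex $w$ at distance $3$ from $v$ and distance $2$ from $v_1$; a priori, if $d>3$, a neighbor of $v$ could be at distance $4$ from a vertex in $N_3(v)$, and one must check that the $N_2(v_1)$-alternative actually occurs. The double-counting identity $k\cdot p^1_{3,2}=k_3\cdot c_3$ cleanly resolves this, using only that $k_3>0$ (from $b_2>0$, equivalently $d\geq 3$) and $c_3\geq 1$. Once $w$ is chosen, the remainder of the argument is an iterated use of $a_2=0$ combined with triangle inequality, so no further subtleties should arise.
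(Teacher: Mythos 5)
Your proof is correct. The double-count gives $k\cdot p^1_{3,2} = k_3\cdot p^3_{1,2}$, and $p^3_{1,2}=p^3_{2,1}=c_3$ by the symmetry $p^s_{ij}=p^s_{ji}$; since $d\geq 3$ implies $k_3>0$ and $c_3\geq 1$, indeed $p^1_{3,2}>0$, so the required vertex $w$ exists for the chosen edge $vv_1$. (One could also see this directly: take $z\in N_3(v)$ and walk two steps toward $v$ to land on a neighbor of $v$ at distance $2$ from $z$; distance-regularity then gives $p^1_{3,2}>0$.) The chain of deductions from $a_2=0$ applied to $(v_1,w)$ and then to $(v,y)$ is sound, and the contradiction $\dist(v_2,w)\leq 2$ versus $\dist(v_2,w)=3$ is genuine.

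Your route, however, is genuinely different from the paper's. The paper fixes a base vertex $x$, takes $v\in N_2(x)$, and sets $S(v)=N(v)\cap N_1(x)$, a set of size $\mu$. It first shows, using $a_2=0$ for the pair $(x,v)$, that the $\lambda$ common neighbors of $v$ and any $w\in S(v)$ account for all $a_1=\lambda$ neighbors of $w$ inside $N_1(x)$, so $S(v)$ is closed under adjacency within $N_1(x)$. Then, assuming $\lambda\geq 1$, it takes an edge $w_1w_2$ inside $S(v)$ and applies $a_2=0$ once more to conclude that every $w_3\in N_1(x)$ is adjacent to $w_1$ or $w_2$ (an edge inside $N_2(w_3)$ being forbidden); hence $N_1(x)=S(v)$, i.e.\ $\mu=k$, which forces $b_2=0$ and contradicts $d\geq 3$. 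The two arguments thus go in opposite directions: the paper derives the strong quantitative statement $\mu=k$ essentially in one sweep, while you chase a small configuration $(v,v_1,v_2,w,y)$ outward to a vertex at distance $3$ and derive a contradiction from two applications of $a_2=0$. Your version requires the extra step of locating $w$, but both are short and elementary.
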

\begin{proof}
Fix any vertex $x\in X$. Take $v\in N_2(x)$. Denote the set of neighbors of $v$ in $N_1(x)$ by $S(v)$. Then $|S(v)| = \mu$. Observe, that common neighbors of $v$ and $w\in S(v)$ are in $N_1(x)$ or in $N_2(x)$. Moreover, the condition $a_2 = 0$ force them to be in $S(v)$ and there are exactly $\lambda$ such neighbors. At the same time, $w$ has exactly $\lambda$ neighbors in $N_1(x)$, i.e., all neighbors of $w\in S(v)$ within $N_1(x)$ lie inside $S(v)$.

Suppose that $\lambda\neq 0$, then there are two adjacent vertices $w_1, w_2\in S(v)$. Moreover, for any $w_3\in N_1(x)$ we have $\dist(w_i,w_3)\leq 2$ for $i = 1,2$. And distance could not be 2 for both, as $a_2 = 0$. Therefore, $w_3$ is adjacent to at least one of $w_1, w_2$. However, by the argument above, any such $w_3$ is inside $S(v)$, i.e., $S(v) = N_1(x)$, or in another words $\mu = k$. But it means that there is no vertex at distance $3$ from $x$, so we get a contradiction with the assumtion that the diameter is at least $3$.
\end{proof}

\begin{proposition}\label{prop-k-big}
Let $X$ be a distance-regular graph of diameter $d\geq 3$. Suppose $k>n\gamma>2$ for some $\gamma>0$. If $X$ is not a bipartite graph, then $\motion(X)$ is at least $\displaystyle{\frac{\gamma}{3} n}$.
\end{proposition}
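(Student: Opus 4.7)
My plan is to bound the motion from below via the minimal distinguishing number. By Lemma~\ref{obs1}, $\motion(X) \geq D_{\min}(X)$, and from the inequality $D_{\min}(X) \geq k - \mu$ established in the discussion preceding Corollary~\ref{linear-mu-bound} (which rests on Lemma~\ref{max-min}(1)), combined with the hypothesis $k > \gamma n$, it would suffice to show that $\mu \leq (2/3) k$, which would give $D_{\min}(X) \geq k/3 > (\gamma/3) n$ and therefore the desired motion bound.

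The argument splits on the value of $\mu$. In the easy case $\mu \leq (2/3) k$, Corollary~\ref{linear-mu-bound} applied with $\delta = 1/3$ immediately yields $\motion(X) \geq (\gamma/3) n$. The substantive work is to rule out the case $\mu > (2/3) k$ under the standing hypotheses (non-bipartite, $d \geq 3$, $k > 2$). In that case, monotonicity of the intersection numbers forces $c_i > (2/3) k$ and therefore $b_i = k - a_i - c_i < k/3$ for all $i \geq 2$. I would then distinguish on whether $a_2 = 0$. If $a_2 \neq 0$, Lemma~\ref{max-min}(2) gives $k - \lambda \leq 2(k - \mu) < (2/3) k$, so $\lambda > k/3$; combined with Corollary~\ref{min-lambda-d3}, we get tight constraints on $\lambda$ and $\mu$ that must be shown to contradict the non-bipartite hypothesis. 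If $a_2 = 0$, Lemma~\ref{a2-lambda} forces $\lambda = 0$, and the non-bipartite hypothesis then requires $a_i \neq 0$ for some $i \geq 3$, which in particular guarantees existence of interior structure that must be exploited.

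The main obstacle I anticipate is closing the contradiction cleanly in the $\mu > (2/3) k$ case, particularly in the subcase $\lambda = 0$ and $a_2 = 0$ where $a_i \neq 0$ occurs only for $i \geq 3$. There, the elementary inequalities from Lemmas~\ref{max-min} and~\ref{a2-lambda} do not directly suffice, and one must exploit the global constraint $n = \sum_{i=0}^d k_i$ combined with the geometric decay $k_{i+1}/k_i = b_i/c_{i+1} < 1/2$ for $i \geq 2$ to rule out non-bipartite parameter arrays with $\mu > (2/3) k$. Lemma~\ref{min-est} may also prove useful here: since $k > \gamma n$ forces $r = (n-1)/k < 1/\gamma$, its bound on $\mu$ in terms of $r$ and $d$ could be combined with the constraint $r \geq 2$ (which follows once $d \geq 3$ and $k$ satisfies the other hypotheses) to obtain the missing contradiction.
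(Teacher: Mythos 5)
Your overall shape is right (split on $\mu\le \tfrac23 k$ vs.\ $\mu>\tfrac23 k$, and in the hard case split on $a_2$), and the easy case is handled exactly as in the paper. But the hard case has genuine gaps, and the paper in fact closes it by different means than what you anticipate.

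First, in the subcase $a_2\neq 0$ you set out to \emph{derive a contradiction} with $\mu>\tfrac23 k$, but this need not be a contradiction. The paper does not try to show $\mu\le\tfrac23 k$ there. Instead it uses both halves of Lemma~\ref{max-min} to get $k-\min(\lambda,\mu)\le 2(k-\max(\lambda,\mu))$, and since every pair of distinct vertices has at most $\max(\lambda,\mu)$ common neighbours, this yields $D_{\min}(X)\ge 2(k-\max(\lambda,\mu))\ge k-\min(\lambda,\mu)$; together with Corollary~\ref{min-lambda-d3} ($\min(\lambda,\mu)\le\tfrac23 k$) one gets $D_{\min}(X)\ge k/3$ directly, even if $\mu$ itself is large. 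You are trying to show $\mu\le\tfrac23 k$, which is a stronger (and possibly false) assertion than what the motion bound requires.

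Second, you do not separate off the case $d\ge 4$, where the paper observes the much stronger fact $\mu\le k/2$: take $v,w$ at distance $4$ and $y$ at distance $2$ from each; the two $\mu$-sets $N(y)\cap N(v)$ and $N(y)\cap N(w)$ are disjoint subsets of $N(y)$. Your proposed route via $n=\sum k_i$ and the decay $k_{i+1}/k_i<1/2$ for $i\ge 2$ does not produce a contradiction with $k>\gamma n$ (one only gets $n\le O(k)$, which is consistent with the hypothesis), so the $d\ge4$ case is not closed by your plan.

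Third, and this is the deepest gap, in the subcase $d=3$, $a_2=0$, $\mu>\tfrac23 k$ the elementary lemmas do not suffice, as you yourself suspect. The paper invokes an external result (Lemma~5.4.1 of \cite{BCN}): for $d\ge3$ and $\mu>1$, either $c_3\ge\tfrac32\mu$ or $c_3\ge\mu+b_2=k-a_2$. With $a_2=0$ and $\mu>\tfrac23 k$ this forces $c_3\ge k$, hence $a_3=0$; combined with $\lambda=a_1=0$ from Lemma~\ref{a2-lambda} and $a_2=0$, the graph is bipartite, a contradiction. Without this (or an equivalent inequality relating $c_3$ to $\mu$ and $b_2$), the $\lambda=a_2=0$ case does not close; your suggested use of Lemma~\ref{min-est} and the constraint $r\ge2$ gives only $\mu<2k$, which is vacuous here.
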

\begin{proof}
Suppose, that $\mu\leq\frac{2}{3}k$, then result follows from Corollary \ref{linear-mu-bound}. 

If diameter $d\geq 4$, then $\mu\leq \frac{k}{2}$. Indeed, let $v,w$ be vertices at distance $4$ and let $y$ be a vertex at distance 2 from each of them. Then $y$ and $v$ have $\mu$ common neighbors and $y$ and $w$ have $\mu$ common neighbors, and they are all distinct. At the same time, they all are neighbors of $y$, so $\mu\leq k/2$.

If $d = 3$ and $a_2>0$, then by Lemma \ref{max-min} the inequality $k-\min(\lambda, \mu)\leq 2(k-\max(\lambda,\mu))$ holds. Hence, any two vertices $u,v\in X$ are distinguished by at least $$|N(u)\bigtriangleup N(v)| = 2(k-|N(u)\cap N(v)|)\geq 2(k-\max(\lambda,\mu))\geq k-\min(\lambda, \mu)$$ vertices. Moreover, Corollary \ref{min-lambda-d3} for $d=3$ gives $\min(\lambda, \mu)\leq \frac{2}{3}k$. 

Finally, assume $d=3$, $a_2 = 0$ and $\mu > 2k/3>1$. Lemma 5.4.1 in \cite{BCN}, states that for a distance-regular graph of diameter $d\geq 3$, if $\mu>1$, then either $c_3\geq 3\mu/2$, or $c_3\geq \mu+b_2 = k-a_2$. Thus, we get that $c_3\geq k$, i.e., $a_3 = 0$. Hence, by Lemma \ref{a2-lambda} the graph $X$ is bipartite.
\end{proof}

\subsection{Motion of primitive distance-regular graphs}\label{sec-dist-number-gen-drg}
Recall that a distance-regular graph $X$ is primitive if the distance-$i$ graph $X_i$ is connected for every $1\leq i\leq d$. Recall also that by Definition~\ref{def:mindistnum},

\[D_{\min}(X) = \min\limits_{u\neq v\in V} D(u, v).\]

It is easy to see, that for a distance-regular graph, the number $D(u,v)$ depends only on the distance $i$ between $u$ and $v$, so one can define $D(i) = D(u,v)$. We will need the following special case of the inequality between the distinduishing numbers shown by Babai \cite{Babai-annals}.

\begin{lemma}[Babai {\cite[Proposition 6.4]{Babai-annals}}]\label{babai-dist}
Let $X$ be a primitive distance-regular graph of diameter $d$. Then for any distances $1\leq i, j\leq d$ the inequality $D(j)\leq d\cdot D(i)$ holds.
\end{lemma}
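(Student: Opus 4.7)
The plan is to use primitivity to pass to the distance-$i$ graph $X_i$ and to bound its diameter by $d$ using the polynomial relation $A_i=\nu_i(A)$. First I would observe that since $X$ is primitive, $X_i$ is connected for every $1\le i\le d$, so distances inside $X_i$ are well-defined. The first real step is the diameter bound $\operatorname{diam}(X_i)\le d$. Recall from Section~\ref{sec:dist-reg-graphs} that $A_i=\nu_i(A)$ is a polynomial in $A$, and that the minimal polynomial of $A$ has degree exactly $d+1$, so $A$ has precisely $d+1$ distinct eigenvalues. Consequently $A_i$ has at most $d+1$ distinct eigenvalues. By the standard fact that a connected graph whose adjacency matrix has $m$ distinct eigenvalues has diameter at most $m-1$ (its minimal polynomial has degree $m$, so $A_i^m$ is a linear combination of lower powers, forcing a walk of length $<m$ between any two vertices), we get $\operatorname{diam}(X_i)\le d$.

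Next, fix any two vertices $u,v$ in $X$ with $\dist_X(u,v)=j$ and choose a shortest path in $X_i$ connecting them,
\[
u=w_0,\,w_1,\,\dots,\,w_\ell=v,\qquad \ell\le \operatorname{diam}(X_i)\le d,
\]
so $\dist_X(w_s,w_{s+1})=i$ for each $0\le s<\ell$. Then I would invoke the following simple monotonicity argument: if a vertex $x\in X$ satisfies $\dist_X(x,w_s)=\dist_X(x,w_{s+1})$ for every $s$, then the sequence of integers $\dist_X(x,w_0),\dots,\dist_X(x,w_\ell)$ is constant, and in particular $\dist_X(x,u)=\dist_X(x,v)$, so $x$ does not distinguish $u$ and $v$. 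Contrapositively, every distinguisher of $(u,v)$ distinguishes some consecutive pair $(w_s,w_{s+1})$.

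Therefore the set of distinguishers of $(u,v)$ is contained in the union of distinguishers of the $\ell$ pairs $(w_s,w_{s+1})$, each of which is a pair at distance $i$ in $X$. Hence
\[
D(j)=D(u,v)\ \le\ \sum_{s=0}^{\ell-1}D(w_s,w_{s+1})\ =\ \ell\cdot D(i)\ \le\ d\cdot D(i),
\]
which is the desired inequality.

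The heart of the proof is the diameter bound $\operatorname{diam}(X_i)\le d$; everything else is a short walk-tracing observation together with the use of primitivity to guarantee that $X_i$ is connected. I expect the diameter bound to be the main obstacle in the sense that it is where the distance-regular structure enters; once it is in place, the rest is essentially a pigeonhole-style argument on integer-valued sequences.
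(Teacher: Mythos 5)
Your proof is correct and follows the same route as the paper: chain the triangle inequality $D(u,v)\leq D(u,w)+D(w,v)$ along a geodesic $u=w_0,\dots,w_\ell=v$ in the connected graph $X_i$, using that $D$ depends only on the distance class. The paper states this in one sentence and leaves implicit the key bound that the diameter of $X_i$ is at most $d$; you correctly supply it via the standard spectral fact that a connected graph with at most $d+1$ distinct eigenvalues has diameter at most $d$, which is a perfectly valid way to close that gap (one can also argue combinatorially: for fixed $u$, each $X_i$-distance sphere is a union of some of the $X$-distance spheres $N_t(u)$, $1\leq t\leq d$, so there are at most $d$ nonempty spheres of positive radius).
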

\begin{proof}
Since the distance-$i$ graph $X_i$ is connected, statement follows from the triangle inequality  $D(u,v)\leq D(u,w)+D(w, v)$ for any  vertices $u,v,w$ of $X$.
\end{proof}

\begin{proposition}\label{primitive-distinguish} Let $X$ be a primitive distance-regular graph of diameter $d\geq 2$ on $n$ vertices. Fix some positive real number $\alpha>0$. Suppose that for some $1\leq j \leq d-1$ inequalities $b_j\geq \alpha k$ and $c_{j+1}\geq \alpha k$ hold. Then $$D_{\min}(X)\geq  \frac{\alpha}{d}n.$$
\end{proposition}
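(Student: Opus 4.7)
My plan is to bound the distinguishing number $D(1)$ for a pair of adjacent vertices from below by $\alpha n$, and then invoke Lemma~\ref{babai-dist} (which uses primitivity) in the form $D(1)\leq d\cdot D(i)$ for every $i$ to conclude $D_{\min}(X)\geq D(1)/d\geq \alpha n/d$.

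First I would obtain a convenient closed form for $D(1)$. For adjacent $u,v$, a vertex $x$ fails to distinguish them iff $\dist(x,u)=\dist(x,v)=:i$, and the number of such $x$ equals the intersection number $p^1_{i,i}$. Using the standard identity $p^s_{i,\ell}\,k_s=p^i_{\ell,s}\,k_i$ with $s=1$, $\ell=i$, together with $p^i_{i,1}=a_i$, one gets $p^1_{i,i}=a_ik_i/k$. Substituting $a_i=k-b_i-c_i$ and reindexing via $b_ik_i=c_{i+1}k_{i+1}$ collapses everything into
\[ D(1)=\frac{2}{k}\sum_{i=1}^{d} c_i k_i. \]

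The hypothesis is tailor-made to bound this sum from below by splitting it at the pivot $j+1$. For $i\leq j+1$, one has $c_ik_i=b_{i-1}k_{i-1}$ with $b_{i-1}\geq b_j\geq \alpha k$ by monotonicity, giving
\[ \sum_{i=1}^{d} c_i k_i \;\geq\; \sum_{i=1}^{j+1} c_i k_i \;\geq\; \alpha k\sum_{i=0}^{j} k_i. \]
For $i\geq j+1$, one has $c_i\geq c_{j+1}\geq \alpha k$, so
\[ \sum_{i=1}^{d} c_i k_i \;\geq\; \sum_{i=j+1}^{d} c_i k_i \;\geq\; \alpha k\sum_{i=j+1}^{d} k_i. \]
Since $\sum_{i=0}^{j}k_i+\sum_{i=j+1}^{d}k_i=n$, at least one of the two right-hand partial sums of $k_i$'s is $\geq n/2$, and whichever it is yields $\sum_{i=1}^{d}c_ik_i\geq \alpha kn/2$, hence $D(1)\geq \alpha n$ and therefore $D_{\min}(X)\geq \alpha n/d$.

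The main conceptual step is choosing to work with adjacent pairs, because the formula $D(1)=(2/k)\sum c_i k_i$ has exactly the structure that lets one use the two halves of the hypothesis, the bound on $b_j$ and the bound on $c_{j+1}$, symmetrically on the two sides of the pivot $j+1$; after that no serious technical obstacle remains. Primitivity is used only once, in the final appeal to Lemma~\ref{babai-dist}, which is essential since without connectivity of each distance-$i$ graph the bound on $D(1)$ cannot be transferred to the minimum over all distances.
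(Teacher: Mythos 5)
Your argument is correct and shares the paper's skeleton: for adjacent $u,v$ count the non-distinguishers via $p^1_{t,t}=a_tk_t/k$, bound $D(1)\ge\alpha n$, then pass to $D_{\min}$ through Lemma~\ref{babai-dist}. The middle step differs. You derive the closed form $D(1)=\tfrac{2}{k}\sum_{i=1}^d c_ik_i$ and split the sum at the pivot $j+1$, finishing with a pigeonhole observation that one of $\sum_{i\le j}k_i$ and $\sum_{i>j}k_i$ is at least $n/2$. The paper avoids both the closed form and the case split with a pointwise bound: for every $1\le t\le d$ one has $a_t\le(1-\alpha)k$, because if $t\le j$ then $b_t\ge b_j\ge\alpha k$, and if $t>j$ then $c_t\ge c_{j+1}\ge\alpha k$; hence $\sum_{t=1}^d p^1_{t,t}=\tfrac{1}{k}\sum_{t=1}^d a_tk_t\le(1-\alpha)(n-1)$ immediately, giving $D(1)\ge\alpha n$. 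Your version is a valid reformulation (and the identity $D(1)=\tfrac{2}{k}\sum c_ik_i$ is a nice fact in its own right), but the pointwise bound on $a_t$ is shorter, requires no pigeonhole, and is the same observation reused elsewhere in the paper's spectral-gap argument.
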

\begin{proof}
 Since the sequence $(b_i)$ is non-increasing, if $t\leq j$, then $a_t = k-b_t-c_t\leq (1-\alpha)k$. Similarly, the sequence $(c_i)$ is non-decreasing, so if $t>j$, then $a_t = k-b_t-c_t\leq (1-\alpha)k$.

Consider any pair of adjacent vertices $u,v\in X$.  If vertex $x$ does not distinguish $u$ and $v$, then  $\dist(u, x) = \dist(v, x) = t$ for some $1\leq t \leq d$. Note, that for a given $t$ there are $p^{1}_{t, t}$ such vertices $x$ and $$p^1_{t,t} = p^t_{t,1}\frac{k_t}{k} = k_t\frac{a_t}{k}\leq (1-\alpha)k_t.$$
Clearly, $\sum\limits_{i = 1}^{d} k_i = n-1$. Hence, any pair of adjacent vertices is distinguished by at least 
$$n - \sum\limits_{t=1}^d (1-\alpha) k_t\geq n-(1-\alpha)n = \alpha n$$ vertices. Finally, the result follows from Lemma \ref{babai-dist}.
\end{proof} 

\subsection{Reduction to the geometric graphs}\label{sec:main-motion}

In the theorem below we prove our main result  on the motion of distance-regular graphs.

\begin{theorem}\label{main-general-case}
For any $d\geq 3$ there exist $\gamma_d>0$ and a positive integer $m_d$, such that for any primitive distance-regular graph $X$ of diameter $d$ with $n$ vertices either
$$\motion(X)\geq \gamma_d n,$$
or $X$ is geometric with smallest eigenvalue at least $-m_d$.

Furthermore, one can set $\displaystyle{m_d = \left\lfloor 5d^{\log_2 d+1}\right\rfloor}$.
\end{theorem}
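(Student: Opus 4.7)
The plan is to set $\varepsilon=\varepsilon(d)$ and $\eta=\eta(d)$ from Theorem~\ref{thm:main-spectral-gap} and shrink $\varepsilon$ so that $\varepsilon\leq \min(1/6,\eta/2,\eta/(m_d(m_d+1)))$ for $m_d:=\lceil 2/\eta\rceil+1$; Remark~\ref{rem:eta-bound} shows this is comfortably feasible. Since $k\leq 1/\varepsilon$ forces $n$ to be bounded in terms of $d$ (so the theorem is vacuous for small enough $\gamma_d$), I may assume $k>1/\varepsilon$. Applying Theorem~\ref{thm:main-spectral-gap} I obtain the dichotomy: either (I) some index $1\leq i\leq d-1$ satisfies $b_i,c_{i+1}\geq \varepsilon k$ (the case $i=0$ is excluded since then $c_1=1<\varepsilon k$), or (II) the zero-weight spectral radius satisfies $\xi\leq (1-\eta)k$.

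Case (I) is handled immediately by Proposition~\ref{primitive-distinguish}, which produces $D_{\min}(X)\geq(\varepsilon/d)n$, and then Lemma~\ref{obs1} gives $\motion(X)\geq (\varepsilon/d)n$. In Case (II), monotonicity of $(b_i),(c_i)$ together with the failure of (I) yields a largest $j\geq 0$ with $c_{j+1}<\varepsilon k$ and $b_i<\varepsilon k$ for every $i>j$. If $j=0$ then $b_1<\varepsilon k$, so $\lambda>(1-\varepsilon)k-1$ and Lemma~\ref{max-min} then forces $\mu>(1-2\varepsilon)k-2$; this is incompatible with the bound $\mu\leq k/2$ that holds for $d\geq 4$ (via a distance-$4$ pair with a distance-$2$ midpoint, extracted from the proof of Proposition~\ref{prop-k-big}), and for $d=3$ it is incompatible with Corollary~\ref{min-lambda-d3} (applied after Lemma~\ref{a2-lambda} rules out $a_2=0$, since $\lambda>0$). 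Hence $j\geq 1$, which yields the crucial consequence $\mu=c_2\leq c_{j+1}<\varepsilon k$.

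It remains to analyze Case (II) with $\mu<\varepsilon k$. When $\lambda<(\eta/2)k$, the maximum number of common neighbors of a pair of distinct vertices satisfies $q=\max(\lambda,\mu)\leq \eta k/2$, and the Spectral tool Lemma~\ref{mixing-lemma-tool} delivers $\motion(X)\geq n(k-\xi-q)/k\geq n\eta/2$. When $\lambda\geq (\eta/2)k$, setting $m:=\lceil k/(\lambda+1)\rceil$ we have $(m-1)(\lambda+1)<k\leq m(\lambda+1)$ and $m\leq 2/\eta+1\leq m_d$; the compatibility $\varepsilon\leq \eta/(m_d(m_d+1))$ ensures $\lambda\geq (\eta/2)k\geq m(m+1)\mu/2$, so Corollary~\ref{geom-corol} forces $X$ to be geometric with smallest eigenvalue $-m\geq -m_d$. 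Choosing $\gamma_d:=\min(\varepsilon/d,\eta/2)$ collects all the linear motion bounds. The main obstacle is quantitative: tracking the constants through Remark~\ref{rem:eta-bound} to match the explicit value $m_d=\lfloor 5d^{1+\log_2 d}\rfloor$ requires modest but careful bookkeeping (the crude estimate $m_d\leq 8d^{1+\log_2 d}+1$ needs only minor tightening), but all the conceptual content has already been invested in Theorem~\ref{thm:main-spectral-gap}; the elimination of the $j=0$ subcase is the only place where the primitivity assumption (through Lemma~\ref{a2-lambda} and Corollary~\ref{min-lambda-d3}) is actually used.
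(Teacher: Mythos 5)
Your proof is correct in essence and takes a genuinely different route from the paper's. The paper, after applying Theorem~\ref{thm:main-spectral-gap} and disposing of the combinatorial case via Proposition~\ref{primitive-distinguish}, performs a three-way split on the spectral-expander side: $\mu>\eta^3 k$ (handled by Lemma~\ref{min-est} plus Proposition~\ref{prop-k-big}), $\mu\leq\eta^3 k$ with $\lambda<\tfrac{9}{10}\eta k$ (spectral tool), and $\mu\leq\eta^3 k$ with $\lambda\geq\tfrac{9}{10}\eta k$ (Metsch via Corollary~\ref{geom-corol}). You instead observe that, once the combinatorial alternative fails and $c_1=1<\varepsilon k$, the threshold index $j$ either is at least $1$ — forcing $\mu=c_2\leq c_{j+1}<\varepsilon k$ — or equals $0$; and you eliminate $j=0$ using Lemma~\ref{max-min} together with the $\mu\leq k/2$ bound from inside the proof of Proposition~\ref{prop-k-big} (for $d\geq 4$) and Corollary~\ref{min-lambda-d3} (for $d=3$). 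This collapses the paper's ``large $\mu$'' branch entirely and produces a cleaner case analysis with a better $\gamma_d$ (no $(\eta^3 d^{-1})^{d-1}$ term). The only genuine losses are quantitative, and you acknowledge them: splitting at $\lambda\geq\tfrac{\eta}{2}k$ gives $m\leq\lceil 2/\eta\rceil+1\approx 8d^{1+\log_2 d}$, slightly worse than the stated $\lfloor 5d^{1+\log_2 d}\rfloor$; the paper recovers the tighter constant by splitting at $\lambda\geq\tfrac{9}{10}\eta k$ and paying only $\eta/10$ in the spectral-tool bound, which you could do as well. One small inaccuracy in your closing remark: Lemma~\ref{a2-lambda} and Corollary~\ref{min-lambda-d3} hold for all distance-regular graphs and do not invoke primitivity; the place primitivity is actually used is Proposition~\ref{primitive-distinguish} (via Lemma~\ref{babai-dist}, which needs every $X_i$ connected), i.e.\ in the combinatorial case, not in the $j=0$ elimination.
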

\begin{proof} By Theorem~\ref{thm:main-spectral-gap}, there exist constants $\varepsilon>0$ and $\eta>0$, which depend only on $d$, such that 
\begin{itemize}
\item either $b_i\geq \varepsilon k$ and $c_{i+1}\geq \varepsilon k$,
\item or the zero-weight spectral radius of $X$ satisfies $\xi\leq k(1-\eta)$.
\end{itemize}
In the first case, by Proposition~\ref{primitive-distinguish}, we obtain \[\motion(X)\geq  \frac{\varepsilon}{d}n.\]
Hence, assume that $\xi\leq k(1-\eta)$. For convenience, we additionally assume $\eta\leq 1/7$.

\noindent\textbf{Case 1.} Suppose that $\mu > \eta^3 k$. Then, by Lemma~\ref{min-est}, $n\leq \max\left(d, 2\left(\eta^{-3}d\right)^{d-1}\right)k+1$. Therefore, by Proposition~\ref{prop-k-big}, 
\[\motion(X)\geq \frac{1}{7}\left(\eta^3 d^{-1}\right)^{d-1}n.\] 

\noindent\textbf{Case 2.} Suppose that $\displaystyle{\lambda<\frac{9}{10}\eta k}$  and $\mu \leq \eta^3 k$. Then any pair of distinct vertices in $X$ has at most $q(X) = \max(\lambda, \mu)\leq 9\eta k/10$ common neighbors. Therefore, by Lemma~\ref{mixing-lemma-tool},
\[\motion(X)\geq \frac{\eta}{10} n.\] 

\noindent\textbf{Case 3.} Suppose that $\displaystyle{\lambda\geq \frac{9}{10}\eta k}$ and $\mu\leq \eta^3 k$. Let $m$ be the integer that satisfies 
\[ (m-1)(\lambda+1)< k\leq m(\lambda+1).\] 
The assumption on $\lambda$ implies $\displaystyle{m-1\leq \frac{10}{9}\eta^{-1}}$. We additionally assumed $\eta\leq 1/7$, so
\[ \frac{1}{2}m(m+1)\mu\leq \frac{1}{2}\left(\frac{10}{9}\eta^{-1}+1\right)\left(\frac{10}{9}\eta^{-1}+2\right)\mu\leq  \frac{9}{10}\eta^{-2}\mu\leq \frac{9}{10}\eta k\leq \lambda.\]
Thus, by Corollary~\ref{geom-corol}, the graph $X$ is a geometric distance-regular graph with smallest eigenvalue $-m$.

Finally, we note that we can take $\displaystyle{m_d = \left\lfloor\frac{10}{9}\eta^{-1}+1\right\rfloor}$ and 
\[\gamma_d = \min\left(\frac{\varepsilon}{d}, \frac{1}{7}\left(\eta^3 d^{-1}\right)^{d-1}, \frac{\eta}{10}\right). \]
Furthermore, by Remark~\ref{rem:eta-bound}, $m_d$ can be taken as $\displaystyle{m_d = \left\lfloor 5d^{\log_2 d+1}\right\rfloor}$.  

\begin{remark} A bit more careful computations show that one can in fact set $$m_d= \lceil\max\left(2(d-1)(d-2)^{\log_2(d-2)}, 2 (d-1)^{\log_2(d-1)}\right)\rceil.$$
In particular, for $d = 3$ this estimate gives upper bound $m_d\leq 4$.
\end{remark}

\end{proof}

\section{Imprimitive case}\label{sec:imprimitive}

In this section we analyze the motion of imprimitive distance-regular graphs. We start by establishing a version of the Spectral tool in the bipartite case. Later we show that the motion of the antipodal graphs is controlled by the motion of their folded graphs. After that we prove motion lower bounds for bipartite graphs and for imprimitive graphs of diameter 3 and 4. A separate analysis for an imprimitive graph of diameter 3 and 4 is needed due to the fact that its folded or halved graph may be a complete graph, and in this case different arguments are required.

\subsection{Spectral tool for bipartite graphs}\label{sec:bip-spectral-tool}

To prove an analog of the Spectral tool (Lemma~\ref{mixing-lemma-tool}) for the case of bipartite graphs
we need a version of the Expander Mixing Lemma for regular bipartite graphs. 

\begin{theorem}[Expander Mixing Lemma: bipartite version, Haemers {\cite[Theorem 5.1]{Haemers}}] Let $X$ be a biregular bipartite graph with parts $U$ and $W$ of sizes $n_U$ and  $n_W$. Denote, by $d_U$ and $d_W$ the degrees of the vertices in parts $U$ and $W$, respectively. Let $\lambda_2$ be the second largest eigenvalue of the adjacency matrix $A$  of $X$. Then for any $S\subseteq U$, $T \subseteq W$
\[\left(E(S,T)\frac{n_U}{|S|} - |T|d_{W}\right)\left(E(S,T)\frac{n_W}{|T|} - |S|d_U\right)\leq \lambda_2^{2}(n_U - |S|)(n_W-|T|),\]
which, using $d_Un_U = d_Wn_W = E(U, W)$, implies

\[\left||E(S,T)| - \frac{d_W |S||T|}{n_U}\right|\leq |\lambda_2|\sqrt{|S||T|},\]
where $E(S, T)$ is the set of edges between $S$ and $T$ in $X$.
\end{theorem}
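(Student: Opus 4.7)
The plan is to mimic the classical proof of the Expander Mixing Lemma, but work with the biadjacency block $B$ rather than the full adjacency matrix, decomposing indicator vectors orthogonally to $\mathbf{1}_U$ and $\mathbf{1}_W$ separately. This respects the asymmetry between the two bipartition classes and produces the product form directly.

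Concretely, I would write $A$ in block form $A = \left(\begin{smallmatrix} 0 & B \\ B^{T} & 0 \end{smallmatrix}\right)$ with $B$ the $n_U \times n_W$ biadjacency matrix, so that $E(S,T) = \mathbf{1}_S^{T} B \mathbf{1}_T$, where $\mathbf{1}_S \in \mathbb{R}^{n_U}$ and $\mathbf{1}_T \in \mathbb{R}^{n_W}$ are the characteristic vectors of $S$ and $T$. Decompose $\mathbf{1}_S = \frac{|S|}{n_U} \mathbf{1}_U + s$ with $s \perp \mathbf{1}_U$, and analogously $\mathbf{1}_T = \frac{|T|}{n_W} \mathbf{1}_W + t$ with $t \perp \mathbf{1}_W$. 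An elementary computation gives $\|s\|^{2} = |S|(n_U - |S|)/n_U$ and $\|t\|^{2} = |T|(n_W - |T|)/n_W$.

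Expanding $\mathbf{1}_S^{T} B \mathbf{1}_T$ yields four terms. The biregularity identities $B \mathbf{1}_W = d_U \mathbf{1}_U$ and $B^{T} \mathbf{1}_U = d_W \mathbf{1}_W$ annihilate the two cross terms, since $\mathbf{1}_U^{T} B t = d_W \mathbf{1}_W^{T} t = 0$ and $s^{T} B \mathbf{1}_W = d_U\, s^{T} \mathbf{1}_U = 0$. Using $d_U n_U = d_W n_W$, the principal term evaluates to $d_W |S||T|/n_U$, giving
\[
E(S,T) - \frac{d_W |S| |T|}{n_U} = s^{T} B t.
\]
The vectors $s$ and $t$ are orthogonal to the top singular vectors of $B$, namely $\mathbf{1}_U/\sqrt{n_U}$ and $\mathbf{1}_W/\sqrt{n_W}$ (with common singular value $\sqrt{d_U d_W}$), so the variational characterization of singular values yields $|s^{T} B t| \leq \sigma_{2}(B)\, \|s\|\, \|t\|$. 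Substituting the norms, squaring, and multiplying by $n_U n_W/(|S||T|)$ produces exactly the claimed product inequality (both factors on its left-hand side are equal positive multiples of $E(S,T) - d_W|S||T|/n_U$, by the identity $d_W/n_U = d_U/n_W$). The weaker form then follows by bounding $\sqrt{(n_U-|S|)(n_W-|T|)/(n_U n_W)} \leq 1$.

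The only nontrivial ingredient is the identification $\sigma_{2}(B) = |\lambda_2|$. For a bipartite graph, the nonzero eigenvalues of $A$ occur in symmetric pairs $\pm\sigma_i$ corresponding to the singular values of $B$: from a right/left singular pair $(u,v)$ of $B$ one verifies that $(u,\pm v)^{T}$ is an eigenvector of $A$ with eigenvalue $\pm\sigma$. Hence the second-largest eigenvalue of $A$ coincides with the second-largest singular value of $B$, which is what the bound requires. This spectral bookkeeping is the only place where bipartiteness and biregularity must be invoked simultaneously; the remainder of the argument is routine linear algebra.
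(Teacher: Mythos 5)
Your proof is correct, and since the paper cites Haemers' Theorem 5.1 without reproducing an argument, there is no in-paper proof to compare against. Your route --- block-decomposing $A$ via the biadjacency matrix $B$, splitting $\mathbf{1}_S$ and $\mathbf{1}_T$ orthogonally to the all-ones vectors of each side, observing that the cross terms vanish by biregularity, and bounding the remainder $s^{T}Bt$ by $\sigma_2(B)\|s\|\|t\|$ --- is the standard ``quadratic form'' derivation of the expander mixing lemma adapted to the biregular setting, and all the computations check: $\mathbf{1}_U$ is an eigenvector of $BB^{T}$ with eigenvalue $d_Ud_W = \sigma_1^2$, the norms $\|s\|^2 = |S|(n_U-|S|)/n_U$ and $\|t\|^2 = |T|(n_W-|T|)/n_W$ are right, and multiplying through by $n_Un_W/(|S||T|)$ recovers the displayed product form because $d_U/n_W = d_W/n_U$. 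Haemers' original proof instead goes through eigenvalue interlacing applied to a $3\times 3$ quotient matrix associated with the partition $\{S, U\setminus S, T, W\setminus T\}$; interlacing is a somewhat heavier hammer but yields the stated product inequality directly without manipulating singular vectors, and is the tool around which Haemers' whole paper is organized. Your argument is more elementary and self-contained for this single statement. One small point of rigor worth flagging: the phrase ``the top singular vectors of $B$, namely $\mathbf{1}_U/\sqrt{n_U}$ and $\mathbf{1}_W/\sqrt{n_W}$'' tacitly assumes $\sigma_1(B)$ is simple; if not, the bound $|s^{T}Bt|\leq\sigma_2\|s\|\|t\|$ still holds trivially because $\sigma_2=\sigma_1=\|B\|_{\mathrm{op}}$, but the justification should mention this case rather than rely on uniqueness of the top singular pair.
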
 
%
%
%

Next lemma is an analog  of Lemma \ref{mixing-lemma-tool} for bipartite graphs.

\begin{lemma}\label{bip-mixing-lemma-tool}
Let $X$ be a $k$-regular bipartite graph with parts $U$ and $W$ of size $n/2$ each. Let $\lambda_2$ be the second largest eigenvalue of $A$. Moreover, suppose that any pair of distinct vertices in $X$ have at most $q$ common neighbors. Then  $$\motion(X)\geq \frac{k-|\lambda_2|-q}{2k}n.$$ 
\end{lemma}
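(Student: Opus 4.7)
The plan is to adapt the proof of Lemma~\ref{mixing-lemma-tool} to the bipartite setting, using the bipartite Expander Mixing Lemma stated above in place of its standard version. The first move is a case split according to how $\sigma\in\Aut(X)\setminus\{1\}$ acts on the bipartition $\{U,W\}$.

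If $\sigma$ interchanges $U$ and $W$, then every vertex is moved and $f := |\supp(\sigma)| = n$, which trivially satisfies the claimed bound. I therefore focus on the case that $\sigma$ preserves each of $U$ and $W$. Set $S_U := \supp(\sigma)\cap U$ and $S_W := \supp(\sigma)\cap W$, with $f_U := |S_U|$ and $f_W := |S_W|$. The key combinatorial observation is the following: for every $u\in S_U$ and every fixed vertex $w\in W\setminus S_W$ adjacent to $u$, applying $\sigma$ yields $\sigma(u)w=\sigma(u)\sigma(w)\in E(X)$, so $w$ is a common neighbour of the distinct vertices $u$ and $\sigma(u)$. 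Summing over $u\in S_U$ gives $|E(S_U, W\setminus S_W)|\le q f_U$, and since $|E(S_U, W)| = k f_U$, we obtain
\[
|E(S_U, S_W)| \;\ge\; (k-q) f_U .
\]

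Next I would apply the bipartite Expander Mixing Lemma to the pair $(S_U, S_W)$, using $n_U = n/2$ and $d_W = k$, which yields
\[
|E(S_U, S_W)| \;\le\; \frac{2k\, f_U f_W}{n} + |\lambda_2|\sqrt{f_U f_W}.
\]
Assume without loss of generality $f_W\le f_U$. Combining the two inequalities, dividing by $f_U$, and using $\sqrt{f_W/f_U}\le 1$ gives
\[
k-q \;\le\; \frac{2k f_W}{n} + |\lambda_2|,
\]
which rearranges to $f_W \ge \frac{(k-|\lambda_2|-q)n}{2k}$. Since $f \ge f_W$, the lemma follows.

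One edge case deserves separate attention: the division by $f_U$ is invalid when $f_U=0$, i.e.\ when $\sigma|_U$ is trivial while $\sigma|_W$ is not. In that situation, for any $w\in S_W$ and any $u\in N(w)\subseteq U$, the vertex $u$ is fixed, so $u=\sigma(u)\in N(\sigma(w))$; hence $N(w)\subseteq N(\sigma(w))$, and symmetrically $N(w)=N(\sigma(w))$. The distinct vertices $w$ and $\sigma(w)$ would then share all $k$ neighbours, contradicting $q<k$ (and if $q\ge k$ the stated bound is vacuous). Beyond this tiny bookkeeping, and keeping the normalization of the bipartite EML straight, I do not anticipate any substantive obstacle; the whole argument is a direct bipartite analogue of the common-neighbour plus mixing-lemma strategy used in the non-bipartite case.
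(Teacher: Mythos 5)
Your proof is correct, and the underlying mechanism is the same as in the paper: fixed neighbours of a moved vertex $u$ are common neighbours of $u$ and $\sigma(u)$, and this is played against the bipartite Expander Mixing Lemma. The way the two ingredients are combined is genuinely different, though. The paper takes $S = S_1$, the larger of $\supp(\sigma)\cap U$ and $\supp(\sigma)\cap W$, pads $S_2$ up to a set $T\supseteq S_2$ with $|T| = |S|$, applies the EML to the equal-size pair $(S,T)$, and then by averaging locates a single vertex of $S$ with many fixed neighbours; the conclusion is a lower bound on $\max(f_U,f_W)$. You instead lower-bound $|E(S_U,S_W)|$ by summing the common-neighbour count over all of $S_U$, apply the EML directly to the unpadded pair $(S_U,S_W)$, and normalize by the larger part; the conclusion is a lower bound on $\min(f_U,f_W)$. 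A lower bound on the smaller part is the stronger statement: your argument in fact gives
\[
f = f_U + f_W \;\geq\; 2\min(f_U,f_W) \;\geq\; \frac{k-|\lambda_2|-q}{k}\,n,
\]
a factor-$2$ improvement over the lemma, which you then discard by writing only $f\geq f_W$. (The paper's padding of $T$ to size $|S_1|$ is precisely where that factor is given up.)

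Two small remarks. The edge case $f_U = 0$, $f_W>0$ that you treat at the end is already excluded by the assumption $f_W\leq f_U$ together with $\sigma$ being nontrivial, so that paragraph is redundant; the argument in it (each $w\in S_W$ would share all $k$ neighbours with $\sigma(w)$, forcing $q\geq k$) is nonetheless correct. The case where $\sigma$ interchanges $U$ and $W$, which you dispatch explicitly at the start, is handled only implicitly in the paper's proof: there one gets $S=U$, $T=W$, $W\setminus T=\emptyset$, so the derived inequality $q\geq 0$ and the final bound on fixed points are both vacuous and nothing breaks. Making the case explicit, as you do, is cleaner.
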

\begin{proof}
Take any non-trivial automorphism $\sigma$ of $X$. Consider $S_1\subseteq U$ and $S_2\subseteq W$, such that  $S_1\cup S_2 = \supp(\sigma) = \{x\in X | x^{\sigma} \neq x\}$ be the support of $\sigma$. Without lost of generality, we may assume that $|S_1|\geq |S_2|$. Denote $S = S_1$ and let $T\subset W$ be a set which satisfies $S_2\subseteq T$ and $|T| = |S|$. By the Expander Mixing Lemma we get 
\[ \frac{|E(S, T)|}{|S|}\leq |\lambda_2|+k\frac{2|S|}{n}.\]
Hence, there exists a vertex $x$ in $S$ which has at most $\displaystyle{|\lambda_2|+k\frac{2|S|}{n}}$ neighbors in $T$. Thus, $x$ has at least $\displaystyle{k - \left(|\lambda_2|+k\frac{2|S|}{n}\right)}$ neighbors in $W\setminus T$, and they all are fixed by $\sigma$. Therefore, they all are common neighbors of $x$ and $x^{\sigma} \neq x$. We get the inequality $\displaystyle{q\geq k - \left(|\lambda_2|+k\frac{2|S|}{n}\right)}$, which is equivalent to 
$\displaystyle{\left(\frac{|\lambda_2|+q}{k}\right)\frac{n}{2}\geq \frac{n}{2}-|S|}$. By the definition of $S$ and $T$ the number of fixed points of $\sigma$ is at most
\[n-|S_1|-|S_2|\leq n-|S| \leq \left(\frac{1}{2}+\frac{|\lambda_2|+q}{2k}\right)n.\]
\end{proof}

\subsection{Reduction results}

We show that the motion of an imprimitive distance-regular graph is controlled by the motion of its folded or halved graph.

\begin{proposition}\label{prop:reduction-antip} Let $X$ be an antipodal distance-regular graph of diameter $d\geq 3$ on $n$ vertices and $\widetilde{X}$ be its folded graph on $\widetilde{n}$ vertices. Suppose $\motion(\widetilde{X})\geq \alpha \widetilde{n}$. Then $\motion(X)\geq \alpha n$.
\end{proposition}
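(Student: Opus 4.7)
The plan is to use the natural group homomorphism $\pi : \Aut(X) \to \Aut(\widetilde{X})$ which sends $\sigma \in \Aut(X)$ to the induced permutation on antipodal fibers. This is well-defined because any automorphism of $X$ preserves distances and therefore preserves the equivalence relation ``being at distance $d$'' whose classes are the antipodal fibers; the induced permutation respects adjacency in $\widetilde{X}$ since $\sigma$ maps adjacent vertices to adjacent vertices. Recall each fiber has size $r$, so $n = r\widetilde{n}$.

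Given a non-identity $\sigma \in \Aut(X)$, I split into two cases. If $\pi(\sigma) \neq \mathrm{id}$, then the hypothesis $\motion(\widetilde{X}) \geq \alpha \widetilde{n}$ implies that $\pi(\sigma)$ moves at least $\alpha \widetilde{n}$ fibers. For every fiber $F$ with $\pi(\sigma)(F) \neq F$, every vertex of $F$ is mapped by $\sigma$ into a different fiber and is therefore non-fixed. Counting $r$ non-fixed vertices per moved fiber gives
\[
|\supp(\sigma)| \geq r \cdot \alpha \widetilde{n} = \alpha n,
\]
as required.

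The heart of the proof is the opposite case, $\pi(\sigma) = \mathrm{id}$, in which $\sigma$ preserves every fiber setwise but acts non-trivially on at least one of them. I will show $\sigma$ is then fixed-point-free, which gives $|\supp(\sigma)| = n \geq \alpha n$ trivially. Indeed, if $v$ is non-fixed then $\sigma(v)$ lies in the same antipodal fiber and differs from $v$, so $\dist(v,\sigma(v)) = d \geq 3$. By the triangle inequality a common neighbor of $v$ and $\sigma(v)$ would force $\dist(v,\sigma(v)) \leq 2$, so they share no common neighbor. But a fixed $w \in N(v)$ would satisfy $w = \sigma(w) \in N(\sigma(v))$ and hence be exactly such a common neighbor. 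Therefore every neighbor of a non-fixed vertex is non-fixed, which by contraposition says the fixed set of $\sigma$ is closed under taking neighbors. Since $X$ is connected and $\sigma \neq \mathrm{id}$, the fixed set must be empty.

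The only delicate step is this propagation argument in the second case, and it is precisely here that the hypothesis $d \geq 3$ is used (to forbid common neighbors between antipodal vertices). Everything else is routine bookkeeping with the homomorphism $\pi$ and the fiber structure.
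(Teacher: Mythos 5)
Your proof is correct and follows essentially the same route as the paper's: pass to the induced automorphism on the folded graph, handle the case where the induced map is non-trivial by counting $r$ displaced vertices per moved fiber, and handle the case where it is trivial by showing that a non-fixed vertex at distance $d\geq 3$ from its image cannot have a fixed neighbor, so connectedness forces the fixed set to be empty. The only cosmetic difference is that you phrase the last step in terms of common neighbors while the paper phrases it as an adjacency contradiction with $d\geq 3$; the content is identical.
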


\begin{proof}
Assume that $X$ is an $r$-cover of $\widetilde{X}$ and let $\phi:X \rightarrow \widetilde{X}$ be a cover map. Let $\sigma$ be an automorphism of $X$. Note that by the definition of antipodal and folded graphs, vertices of $\widetilde{X}$ are maximal cliques (connected components) of $X_d$. Since $\sigma$ is an automorphism of $X$, it preserves the relation of being at distance $d$, so $\sigma$ respects preimages of $\phi$. Hence, it induces an automorphism $\widetilde{\sigma}$ of $\widetilde{X}$ defined as $\widetilde{\sigma}(x) = \phi(\sigma(\phi^{-1}(x)))$. 

If $\widetilde{\sigma}$ is non-identity, then by the assumptions of the lemma, the degree of $\widetilde{\sigma}$ is at least $\alpha \widetilde{n}$. Suppose that $x\in V(\widetilde{X})$ is not fixed by $\widetilde{\sigma}$, then $\phi^{-1}(x)$ is disjoint from $\sigma(\phi^{-1}(x))$. Thus, all vertices in $\phi^{-1}(x)$ are not fixed by $\sigma$. Therefore, the degree of $\sigma$ is at least $r\cdot \alpha \widetilde{n} = \alpha n$.

Assume that $\widetilde{\sigma}$ is the identity map. Suppose that $\sigma$ is a non-identity map. Let $x$ be a vertex such that $\sigma(x)\neq x$ and let $y$ be adjacent to $x$. Note that $\sigma(x)$ is at distance $d$ from $x$ as $\widetilde{\sigma}$ is the identity map. Thus $\sigma(y)\neq y$, as otherwise $y$ is adjacent to $\sigma(x)$ and we get a contradiction with the assumption $d\geq 3$. Therefore, any vertex of $X$ which is adjacent to a vertex not fixed by $\sigma$ is itself not fixed  by $\sigma$. Since $X$ is connected, we get that the degree of $\sigma$ is $n$ in this case.
\end{proof}
\begin{remark} If $X$ is antipodal of diameter $d=2$, then $X$ is a complete multipartite graph and its folded graph is a complete graph. The motion of $X$ is 2 in this case, so statement of the proposition above does not hold.
\end{remark}

\begin{proposition}\label{prop:bipartite-reduction} Let $X$ be a bipartite distance-regular graph on $n$ vertices and let $X^{+}$ and $X^{-}$ be its halved graphs. Then $\motion(X)\geq \min(\motion(X^{+}), \motion(X^{-}))$. 
\end{proposition}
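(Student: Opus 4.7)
The plan is to reduce to the two cases determined by how a non-identity automorphism $\sigma \in \Aut(X)$ interacts with the bipartition of $X$. Since $X$ is a connected bipartite graph with parts $U$ and $W$ (which are necessarily of equal size $n/2$ by regularity), any $\sigma \in \Aut(X)$ either preserves the bipartition setwise or swaps the two parts. The halved graphs $X^{+}$ and $X^{-}$ are, by definition, the two connected components of the distance-$2$ graph $X_2$, and in the bipartite case their vertex sets are precisely $U$ and $W$.

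First I would handle the easy case where $\sigma$ swaps the two parts: then $\sigma(u) \in W$ for every $u \in U$, so $\sigma(u) \ne u$ for all $u \in U$, and likewise for $W$. Hence every vertex is moved, the degree of $\sigma$ equals $n$, and trivially $n \geq \min(\motion(X^{+}), \motion(X^{-}))$.

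Next, in the main case $\sigma$ preserves the bipartition. Since $\sigma$ preserves distances in $X$, it preserves the relation ``at distance $2$ in $X$'' and sends each connected component of $X_2$ to itself. Therefore $\sigma$ restricts to an automorphism $\sigma^{+} \in \Aut(X^{+})$ on $U$ and an automorphism $\sigma^{-} \in \Aut(X^{-})$ on $W$. Because $\sigma$ is non-identity, at least one of $\sigma^{+}$, $\sigma^{-}$ is non-identity; without loss of generality $\sigma^{+} \ne \mathrm{id}$. Then the number of vertices of $U$ moved by $\sigma$ equals the degree of $\sigma^{+}$ in $\Aut(X^{+})$, which is at least $\motion(X^{+})$. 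Therefore the total degree of $\sigma$ satisfies
\[
\deg(\sigma) \geq \motion(X^{+}) \geq \min(\motion(X^{+}), \motion(X^{-})).
\]

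Since $\sigma \in \Aut(X)$ was an arbitrary non-identity automorphism, the bound $\motion(X) \geq \min(\motion(X^{+}), \motion(X^{-}))$ follows. There is no real obstacle here; the only point requiring a brief justification is that the restriction of a distance-preserving map to a single part does define an automorphism of the corresponding halved graph, which is immediate from the definition of $X^{\pm}$ as connected components of $X_2$.
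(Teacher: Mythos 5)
Your proof is correct and follows essentially the same route as the paper's: split on whether $\sigma$ swaps the two parts (degree $n$) or preserves them (restriction to a halved graph is a non-identity automorphism of that halved graph). The only difference is that you spell out the dichotomy ``preserve or swap'' more explicitly, which the paper leaves implicit.
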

\begin{proof} By the definition of the halved graphs, $X$ is bipartite with parts $V(X^{+})$ and $V(X^{-})$ of the same size $n/2$. Let $\sigma$ be a non-identity automorphism of $X$. If $\sigma$ maps $X^{+}$ to $X^{-}$, then the degree of $\sigma$ is $n$. Else $\sigma$ induces automorphisms $\sigma_{+}$ and $\sigma_{-}$ of $X^{+}$ and $X^{-}$, respectively. Since $\sigma$ is non-identity, at least one of $\sigma_{+}$ and $\sigma_{-}$ is non-identity. Without lost of generality, assume $\sigma^{+}$ is non-identity. Then its degree is at least $\motion(X^{+})$. 
\end{proof}

\subsection{Bipartite graphs of diameter at least 4}

\begin{theorem}\label{thm:bipgeq4} Let $X$ be a bipartite graph of diameter $d\geq 4$ on $n$ vertices. If a halved graph of $X$ is primitive, then
\[\motion(X)\geq \gamma_{d}' n, \quad \text{where } \gamma_{d}' = (2d)^{-2d-5}.\]
\end{theorem}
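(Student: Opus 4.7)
My plan is to transfer the problem to the halved graph via Proposition~\ref{prop:bipartite-reduction}, which gives $\motion(X) \geq \min(\motion(X^+), \motion(X^-))$; since by Proposition~\ref{prop:impriv-intersection}(1) the two halved graphs share an intersection array, it suffices to lower bound $\motion(X^+)$. The halved graph $X^+$ is distance-regular of diameter $d' = \lfloor d/2 \rfloor \geq 2$, primitive by hypothesis, and non-bipartite by Proposition~\ref{prop:antip-prelim}(1).

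I would next apply Theorem~\ref{thm:main-spectral-gap} to $X^+$. In its case~(a), there is an index $i$ with $b_i(X^+), c_{i+1}(X^+) \geq \varepsilon_{d'} k^+$; the primitivity of $X^+$ then lets Proposition~\ref{primitive-distinguish} give $D_{\min}(X^+) \geq (\varepsilon_{d'}/d') n^+$, and Lemma~\ref{obs1} combined with the reduction yields $\motion(X) \geq \varepsilon_{d'} n/(2d')$. In case~(b), the zero-weight spectral radius satisfies $\xi(X^+) \leq k^+(1 - \eta_{d'})$, and I would transfer this to a spectral gap for $X$ via the bipartite identity $BB^T = kI + \mu A(X^+)$, where $B$ is the biadjacency matrix of $X$: every eigenvalue $\theta \ne \pm k$ of $A(X)$ then satisfies $\theta^2 \leq k + k(k-1)(1 - \eta_{d'})$, so $|\theta| \leq k(1 - \eta_{d'}/4)$ for $k \geq 2$.

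Since $X$ is bipartite of diameter $\geq 4$, we have $\lambda = 0$ and any two vertices at distance $\geq 3$ have no common neighbor, so $q(X) = \mu$. Applying the bipartite spectral tool (Lemma~\ref{bip-mixing-lemma-tool}) then gives
\[\motion(X) \geq \frac{k - \lambda_2(X) - \mu}{2k}\, n \geq \left(\frac{\eta_{d'}}{8} - \frac{\mu}{2k}\right) n,\]
which is linear in $n$ whenever $\mu \leq \eta_{d'} k/16$.

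The main obstacle is the remaining regime, where case~(b) of Theorem~\ref{thm:main-spectral-gap} holds but $\mu > \eta_{d'} k/16$. Here the bipartite spectral tool by itself is not sufficient, and I would instead apply the standard (non-bipartite) spectral tool (Lemma~\ref{mixing-lemma-tool}) directly to $X^+$, obtaining $\motion(X^+) \geq n^+ (\eta_{d'} k^+ - q(X^+))/k^+$; the crux becomes showing $q(X^+) = \max(\lambda^+, \mu^+) \leq (1-\delta) k^+$ for some $\delta > 0$ depending only on $\eta_{d'}$. Using Proposition~\ref{prop:impriv-intersection}(1) together with the bipartite identities $a_i = 0$ and $b_i = k - c_i$, one computes $\mu^+ = c_3 c_4 / \mu$ and $\lambda^+ = k(c_3 - 1)/\mu + (k - c_3 - 1)$; a case analysis on the relative sizes of $\mu, c_3, c_4, k$ (using the monotonicity of $(c_i)$ and the regime hypothesis that $\mu$ is large) then yields the required gap. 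Finally, taking the worst of the case bounds and substituting the explicit values of $\varepsilon_{d'}, \eta_{d'}$ from Remark~\ref{rem:eta-bound} with $d' \leq d/2$ produces the claimed constant $\gamma_d' = (2d)^{-2d-5}$.
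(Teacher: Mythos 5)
Your overall plan — reduce to the halved graph $Y=X^+$ via Proposition~\ref{prop:bipartite-reduction}, then split on whether $Y$ satisfies conclusion (1) or (2) of Theorem~\ref{thm:main-spectral-gap} — is structurally the same as the paper's, and your sub-case (a) and your ``$\mu$ small'' sub-case of (b) are both sound (the identity $BB^T=kI+\mu A(X^+)$ and the resulting transfer $|\theta|\le k(1-\eta_{d'}/4)$ are correct because $p^{2i}_{1,1}=0$ for $i\ge 2$). The gap is your ``case (b) with $\mu>\eta_{d'}k/16$'' regime. You assert that a case analysis on $\mu,c_3,c_4,k$ will show $q(X^+)=\max(\lambda^+,\mu^+)\le(1-\delta)k^+$ for a constant $\delta$, but this is false. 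Using $\lambda^+ = \widehat{k}-\tfrac{b_2b_3}{\mu}-1$ with $\widehat{k}=\tfrac{k(k-1)}{\mu}$, $b_2=k-\mu$, $b_3=k-c_3$, one gets $\widehat{k}-\lambda^+ = \tfrac{(k-\mu)(k-c_3)}{\mu}+1$; when $\mu$ and $c_3$ are both near $k$ (e.g.\ $\mu=k-2$, $c_3=k-1$, giving $\lambda^+=k$ against $\widehat{k}=k(k-1)/(k-2)$) the ratio $\lambda^+/\widehat{k}$ tends to $1$, so no constant $\delta$ exists and Lemma~\ref{mixing-lemma-tool} on $X^+$ yields nothing. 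This dense-halved-graph case is genuinely present and cannot be reached by the spectral tool in either $X$ or $X^+$.

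The paper handles exactly this regime (its Case~1, $j=1$ and $b_3\le\varepsilon k$) by an entirely different route: from $\lambda(Y)\ge(1-2\varepsilon)\widehat{k}$ it deduces via Lemma~\ref{max-min} that $\mu(Y)\ge\widehat{k}/2$; Lemma~\ref{min-est} then forces $|V(Y)|$ to be linearly bounded by $\widehat{k}$, and Proposition~\ref{prop-k-big} (which does not use the spectral tool at all) gives linear motion. Your proposal is missing this fallback. There is also a quantitative mismatch: the paper's proof exploits the bipartite identity $b_i+c_i=k$ to work directly with the intersection numbers of $X$ and chooses $\varepsilon=(2d)^{-d-2}$, whereas a black-box application of Theorem~\ref{thm:main-spectral-gap} to $X^+$ imports the far smaller constants from Remark~\ref{rem:eta-bound} (roughly $200^{-d'}(d')^{-d'\log d'}$), which could not produce the stated $\gamma_d'=(2d)^{-2d-5}$.
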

\begin{proof} Let $\iota(X) = \{b_0, b_1, \ldots, b_{d-1}; c_1, c_2, \ldots, c_d\}$ be the intersection array of $X$. Denote by $Y$ a halved graph of $X$. By Proposition \ref{prop:impriv-intersection}, the intersection array of $Y$ is
\[ \iota(Y) = \left\{\frac{b_0b_1}{\mu}, \frac{b_2b_3}{\mu}, \ldots, \frac{b_{2t-2}b_{2t-1}}{\mu}; \frac{c_1c_2}{\mu}, \frac{c_3c_4}{\mu}, \ldots, \frac{c_{2t-1}c_{2t}}{\mu}\right\}, \]
where $d\in \{2t, 2t+1\}$. Note that since $X$ is bipartite, $b_i+c_i = k$, so in particular the degree of $Y$ is equal 
\[\widehat{k} = \frac{b_0b_1}{\mu} = \frac{k(k-c_1)}{\mu} = \frac{k(k-1)}{\mu}. \]

For convenience, define $c_i = k$ for any $i>d$. Take $1\leq j\leq t$ such that $c_{2j-1}\leq \varepsilon k$ and $c_{2j+1}\geq \varepsilon k$, where $\varepsilon = \left(2d\right)^{-d-2}$. Then for any $i\leq j$
\begin{equation}\label{eq:bipartite-param-proof}
\frac{c_{2i-1}c_{2i}}{\mu} \leq \frac{c_{2j-1}c_{2j}}{\mu}\leq 2\varepsilon \widehat{k} \quad \text{and} \quad   \frac{b_{2i-2}b_{2i-1}}{\mu}\geq \frac{b_{2j-1}^2}{\mu} = \frac{(k-c_{2t-1})^2}{\mu}\geq (1-\varepsilon)^2\widehat{k}.
\end{equation}
\noindent\textbf{Case 1.} Assume that $j=1$ and $b_{2j+1}\leq \varepsilon k$. Then
\[\lambda(Y) = \widehat{k} - \frac{b_2b_3}{\mu}-1\geq \widehat{k} - 2\varepsilon\widehat{k}. \]
By Lemma~\ref{max-min} we obtain that $\mu(Y)\geq \widehat{k}-4\varepsilon\widehat{k}>\widehat{k}/2$. By Lemma~\ref{min-est}, we get $|V(Y)|<t 2^t  k$. Therefore, as halved graph is not bipartite, by Proposition~\ref{prop-k-big}, we obtain 
\[ \motion(Y)\geq \frac{1}{3t2^t}|V(Y)|.\]

\noindent\textbf{Case 2.} Assume that $j\geq 2$ and $b_{2j+1}\leq \varepsilon k$. Then, for any $i\geq j+1$
\begin{equation}\label{eq:bipartite-param-proof-2}
\frac{c_{2i-1}c_{2i}}{\mu} \geq \frac{c_{2j+1}^2}{\mu} = \frac{(k-b_{2j+1})^2}{\mu} \geq (1-\varepsilon)^2 \widehat{k}.
\end{equation}

Hence, combining Eq.~\eqref{eq:bipartite-param-proof} and Eq.~\eqref{eq:bipartite-param-proof-2}, by Lemma~\ref{eigenvalues-approximation}, the zero-weight spectral radius of $Y$ satisfies
\[ \xi(Y)\leq \left(1-(1-\varepsilon)^2+2(t+2)^2\varepsilon^{\frac{1}{t+1}}\right)\widehat{k}.\]
Since $j\geq 2$, using Eq.~\eqref{eq:bipartite-param-proof}, we can estimate   
\[\lambda(Y) \leq \widehat{k} - \frac{b_2b_3}{\mu}\leq \widehat{k} - (1-\varepsilon)^2\widehat{k}\leq 2\varepsilon \widehat{k} \quad \text{and} \quad \mu(Y) = \frac{c_3c_4}{\mu}\leq 2\varepsilon \widehat{k}.\]
Therefore, by Lemma~\ref{mixing-lemma-tool} and the choice of $\varepsilon$,
\[\motion(Y)\geq \left((1-\varepsilon)^2-2(t+2)^2\varepsilon^{\frac{1}{t+1}}-2\varepsilon\right)|V(Y)|\geq \frac{1}{3}|V(Y)|.\]

\noindent\textbf{Case 3.} Assume that $b_{2j+1}> \varepsilon k$. Then 
\[ \frac{c_{2j+1}c_{2j+2}}{\mu}\geq \frac{c_{2j+1}^2}{\mu}\geq \varepsilon^2 \widehat{k} \quad \text{and} \quad \frac{b_{2j}b_{2j+1}}{\mu} \geq \frac{b_{2j+1}^2}{\mu}\geq \varepsilon^2 \widehat{k}.\]
Since $Y$ is primitive, by Proposition~\ref{primitive-distinguish}, 
\[ \motion(Y)\geq \frac{\varepsilon^2}{t}|V(Y)|.\]

Finally, since $|V(Y)| = n/2$, the inequality $\motion(X)\geq \gamma n$ follows from Proposition~\ref{prop:bipartite-reduction}  for $\displaystyle{\gamma = \min\left(\frac{1}{6t2^t}, \frac{1}{6}, \frac{(2d)^{-2d-4}}{2t}\right)\geq (2d)^{-2d-5}}$.
\end{proof}

\subsection{Bipartite antipodal graphs of diameter 4}

\begin{fact}[\cite{BCN}, p. 425]\label{fact:bi-antipodal-4} Let $X$ be a bipartite antipodal distance-regular graph of diameter $d = 4$. Then there exist $\mu$ and $m$ such that the number of vertices is $n = 2m^2\mu$, the degree is $k = m\mu$, and the intersection array is
\[ \iota(X) = \{m\mu, m\mu-1, (m-1)\mu, 1; 1, \mu, m\mu-1, m\mu\}.\]
Moreover, the spectrum of $X$ consists of $k$ and $-k$ of multiplicity 1, $\sqrt{k}$ and $-\sqrt{k}$ of multiplicity $(m-1)k$, and $0$ of multiplicity $(2k-2)$.
\end{fact}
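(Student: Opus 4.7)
The plan is to prove this in two computations: first derive the intersection array from the bipartite-antipodal structure, then extract the spectrum directly from the resulting tridiagonal intersection matrix.

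For the intersection array, I would combine both parts of Proposition~\ref{prop:impriv-intersection}. Bipartiteness forces $a_i = 0$ and $b_i+c_i = k$ for every $i$, so $b_1 = k-1$. Antipodality with $d = 2t = 4$ forces $b_i = c_{d-i}$ for $i \neq 2$, hence $b_3 = c_1 = 1$, $c_3 = b_1 = k-1$, and $c_4 = b_0 = k$. Writing $\mu := c_2$, the bipartite relation gives $b_2 = k-\mu$, and the covering index $r = 1 + b_2/c_2 = k/\mu$ is a positive integer, so we may set $m := r$, whence $k = m\mu$ and $b_2 = (m-1)\mu$. This is exactly the claimed array. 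The sphere sizes follow from the recurrence $k_{i+1} = k_i b_i/c_{i+1}$: one gets $k_0 = 1$, $k_1 = m\mu$, $k_2 = m(m\mu - 1)$, $k_3 = m(m-1)\mu$, $k_4 = m-1$, which sum to $n = 2m^2\mu$.

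For the spectrum, recall from Section~\ref{sec:dist-reg-graphs} that the eigenvalues of $X$ are precisely the eigenvalues of the $5 \times 5$ intersection matrix $T(X)$. Let $p_j(\theta)$ be the leading $j \times j$ principal minor of $\theta I - T(X)$; with all $a_i = 0$ the standard three-term recurrence becomes $p_0 = 1$, $p_1 = \theta$, $p_{j+1}(\theta) = \theta p_j(\theta) - b_{j-1} c_j p_{j-1}(\theta)$. A short symbolic calculation, using $b_0 c_1 = k$, $b_1 c_2 = (k-1)\mu$, $b_2 c_3 = (k-\mu)(k-1)$, $b_3 c_4 = k$, produces
\begin{equation*}
p_5(\theta) = \theta\,(\theta^2 - k)\,(\theta^2 - k^2),
\end{equation*}
so the distinct eigenvalues of $X$ are $\pm k$, $\pm\sqrt{k}$, and $0$.

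To get multiplicities I would apply the standard formula $m(\theta) = n \big/ \sum_{i=0}^{4} k_i u_i(\theta)^2$, where the eigensequence is defined by $u_{-1} = 0$, $u_0 = 1$, and $b_i u_{i+1}(\theta) = \theta u_i(\theta) - c_i u_{i-1}(\theta)$. The trivial eigenvalues $\pm k$ give $u_i = (\pm 1)^i$, so the denominator equals $n$ and the multiplicity is $1$. For $\theta = 0$ the recurrence yields $u_1 = u_3 = 0$, $u_2 = -1/(m\mu - 1)$, $u_4 = 1$, and $\sum k_i u_i^2 = m^2\mu/(m\mu-1)$, giving multiplicity $2(k-1)$. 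For $\theta = \pm\sqrt{k}$ one obtains $u_1 = 1/\sqrt k$, $u_2 = 0$, $u_3 = -\mu/(\sqrt k(k-\mu))$, $u_4 = -\mu/(k-\mu)$, so $\sum k_i u_i^2 = 2m/(m-1)$ and the multiplicity is $(m-1)k$. The consistency check $1 + 1 + 2(k-1) + 2(m-1)k = 2mk = n$ closes the argument.

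The whole proof is routine bookkeeping; the only conceptually pleasant feature is that the bipartite and antipodal symmetries force $T(X)$ to be palindromic with vanishing diagonal, which is what allows $p_5$ to factor so cleanly. An alternative route would observe that the folded graph has intersection array $\{k,k-1;1,k\}$, forcing $\widetilde{X} \cong K_{m\mu,m\mu}$, and then invoke the standard decomposition of the spectrum of an antipodal cover into the spectrum of the folded graph plus ``fiber'' eigenvalues; but the direct tridiagonal computation above is equally quick.
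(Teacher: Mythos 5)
The paper does not prove this Fact; it is cited verbatim from \cite{BCN}, p.~425. Your derivation is a correct and complete verification, and I checked each step: bipartiteness gives $a_i = 0$, $b_i + c_i = k$; antipodality with $t = 2$ gives $b_3 = c_1 = 1$, $c_3 = b_1 = k-1$, $c_4 = b_0 = k$; the cover index $r = 1 + b_2/c_2 = k/\mu$ is an integer by Proposition~\ref{prop:impriv-intersection}, so setting $m = r$ yields the stated array. Your sphere-size computation $(k_0,\dots,k_4) = (1,\; m\mu,\; m(m\mu-1),\; m(m-1)\mu,\; m-1)$ sums to $2m^2\mu$, and the recurrence for the principal minors of $\theta I - T(X)$ with vanishing diagonal does indeed collapse to $p_5(\theta) = \theta(\theta^2 - k)(\theta^2 - k^2)$: the key cancellation $(k-\mu)(k-1)+(k-1)\mu = k(k-1)$ and then $k(k-1)+k = k^2$ is exactly right. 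The multiplicity calculations via $m(\theta) = n / \sum_i k_i u_i(\theta)^2$ also check out, and your final consistency sum $1+1+2(k-1)+2(m-1)k = 2mk = n$ confirms them. Since the paper offers only a citation, there is no in-paper argument to compare against; yours is the standard direct route, and the alternative you sketch (fold to $K_{m\mu,m\mu}$ and use the antipodal-cover spectral decomposition) is the other standard route found in the literature. Both are fine.
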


\begin{proposition}\label{prop:bip-antip4} Let $X$ be a bipartite antipodal distance-regular graph of diameter $d=4$ on $n$ vertices. Then 
\[ \motion(X)\geq 0.15 n.\] 
\end{proposition}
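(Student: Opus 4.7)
The plan is to split on the size $m$ of the antipodal classes of $X$, using the bipartite Spectral tool for large $m$ and the Combinatorial tool for small $m$. By Fact~\ref{fact:bi-antipodal-4} we have $n = 2m^2\mu$, $k = m\mu$, the full intersection array, and eigenvalues $\pm k$ (simple), $\pm\sqrt{k}$, and $0$. Since $X$ is bipartite, $\lambda = a_1 = 0$; a parity argument rules out common neighbors at distance $3$, and antipodality does so at distance $4$ (else $\dist(u,v) \leq 2$). Hence the maximum number of common neighbors for any pair of distinct vertices is $q(X) = \mu$ (attained at distance $2$), and the second largest eigenvalue in absolute value is $|\lambda_2| = \sqrt{k} = \sqrt{m\mu}$.

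For $m \geq 6$ I plug into Lemma~\ref{bip-mixing-lemma-tool}:
\[
\motion(X) \geq \frac{k - \sqrt{k} - \mu}{2k}\, n \;=\; \frac{1}{2}\left(1 - \frac{1}{m} - \frac{1}{\sqrt{m\mu}}\right) n.
\]
The right-hand side is increasing in each of $m$ and $\mu$, and already at $(m,\mu) = (6,1)$ it exceeds $0.21\,n$, so this case is settled.

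For $m \leq 5$ I use Lemma~\ref{obs1} via distinguishing numbers. Bipartiteness forces $D(1) = D(3) = n$, since $\dist(x,u)$ and $\dist(x,v)$ have opposite parities for every vertex $x$ when $\dist(u,v)$ is odd. To control $D(2)$ and $D(4)$, I need the diagonal intersection numbers $p^2_{i,i}$ and $p^4_{i,i}$; these I obtain by expanding the squares $A_2^2$, $A_3^2$, $A_4^2$ from the standard three-term recurrence for the distance matrices, together with the structural identity
\[
A_4^2 \;=\; (m-1)\,I + (m-2)\,A_4,
\]
which follows immediately from $A_4 + I$ being the block-diagonal indicator of the antipodal classes of size $m$. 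A short calculation then yields
\[
D(2) \;=\; 2(m-1)\mu + 2m, \qquad D(4) \;=\; 2k + 2,
\]
and therefore
\[
\frac{\min(D(2), D(4))}{n} \;\geq\; \frac{m-1}{m^{2}} \;\geq\; \frac{4}{25} \;>\; 0.15
\]
for every $m \in \{2,3,4,5\}$, so Lemma~\ref{obs1} closes the case.

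The main obstacle is that neither tool suffices on its own: for small $m$ the bipartite mixing bound can be zero or negative (it is exactly $0$ for the $4$-cube, the case $m=2,\mu=2$), while for large $m$ the distinguishing ratio $D(2)/n \to (m-1)/m^{2}$ eventually drops below $0.15$. The split at $m = 6$ is precisely the spot where the two halves overlap with just enough room to yield the target constant $0.15$.
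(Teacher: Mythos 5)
Your proof is correct and follows essentially the same strategy as the paper: split on the antipodal class size $m$, apply the bipartite spectral tool (Lemma~\ref{bip-mixing-lemma-tool}) with $q=\mu$ and $|\lambda_2|=\sqrt{k}$ for large $m$, and apply Lemma~\ref{obs1} via a distinguishing-number bound for small $m$. The only cosmetic differences are the cutoff (the paper splits at $m\leq 4$ vs.\ $m\geq 5$, you at $m\leq 5$ vs.\ $m\geq 6$) and that you compute $D(2)$ and $D(4)$ exactly, while the paper uses the cruder but sufficient symmetric-difference bound $D_{\min}(X)\geq 2(k-\mu)$.
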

\begin{proof}
Consider a pair of distinct vertices $u, v$ of $X$. If $\dist(u, v)>2$, then they are distinguished by at least $D(u, v)\geq 2k$ vertices. Since $X$ is bipartite, if $\dist(u, v) = 1$, then $D(u, v) \geq  2k$ as well. Clearly, for $u, v$ at distance 2, we have $D(u, v)\geq 2(k-\mu)$. Thus $D_{\min}(X)\geq 2(k-\mu)$.

By Fact~\ref{fact:bi-antipodal-4}, $k = m\mu$ and $n = 2m^2\mu$ for some integer $m\geq 2$. Therefore, by Lemma~\ref{obs1},
\begin{equation}\label{eq:bip-antip-1}
\motion(X)\geq D_{\min}(X)\geq \frac{m-1}{m^2}n.
\end{equation}  

At the same time, by Fact~\ref{fact:bi-antipodal-4}, we know that the second largest eigenvalue of $X$ equals $\sqrt{k}$. Then, by Lemma~\ref{bip-mixing-lemma-tool}, 
\begin{equation}\label{eq:bip-antip-2} \motion(X)\geq \frac{k-\sqrt{k}-\mu}{2k}n = \frac{m\mu-\sqrt{m\mu}-\mu}{2m\mu}n\geq\frac{m-\sqrt{m}-1}{2m}n.
\end{equation}
Using the bound given by Eq.~\eqref{eq:bip-antip-1} for $m\leq 4$, and the bound given by Eq.~\eqref{eq:bip-antip-2} for $m>4$, we get the desired inequality.
\end{proof}

\subsection{Bipartite graphs of diameter 3}\label{sec-bipartite}

\begin{fact}[\cite{BCN}, p. 432]\label{fact1}
Let $X$ be a bipartite distance-regular graph of diameter $3$. Then the number of vertices of $X$ is $n = 2+2k(k-1)/\mu$ and $X$ has the intersection array
$$\iota(X) = \{k, k-1, k-\mu; 1, \mu, k\}.$$
The eigenvalues of $X$ are $k$, $-k$ with multiplicity 1, and $\pm \sqrt{k-\mu}$ with multiplicity $\frac{n}{2}-1$. 
\end{fact}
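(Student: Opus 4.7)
The proof proceeds in three stages: first derive the intersection array, then the vertex count, then the spectrum.

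First I would exploit bipartiteness to pin down the intersection array. Since $X$ is bipartite, every walk from $v$ to itself has even length, so any closed triangle is impossible; this gives $a_i = 0$ for all $0 \le i \le 3$, hence $b_i + c_i = k$ for every $i$. Plugging in the standard identities $b_0 = k$, $c_0 = 0$, $c_1 = 1$, $b_d = 0$, and $c_2 = \mu$ immediately yields $b_1 = k - 1$, $b_2 = k - \mu$, and $c_3 = k$, so $\iota(X) = \{k, k-1, k-\mu;\, 1, \mu, k\}$.

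Next I would count vertices by computing the distance degrees $k_i = |N_i(v)|$ using the recursion $k_{i+1} = k_i b_i / c_{i+1}$. This gives $k_0 = 1$, $k_1 = k$, $k_2 = k(k-1)/\mu$, and $k_3 = (k-1)(k-\mu)/\mu$. A short algebraic check confirms that $k_0 + k_2 = k_1 + k_3$, which is forced anyway by the bipartition $V = (N_0(v) \cup N_2(v)) \sqcup (N_1(v) \cup N_3(v))$ of a regular bipartite graph. Summing the four distance degrees gives $n = 2 + 2k(k-1)/\mu$, as claimed.

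For the spectrum, I would write down the tridiagonal intersection matrix
\[
T(X) = \begin{pmatrix} 0 & k & 0 & 0 \\ 1 & 0 & k-1 & 0 \\ 0 & \mu & 0 & k-\mu \\ 0 & 0 & k & 0 \end{pmatrix}
\]
whose eigenvalues are precisely those of the adjacency matrix $A$. Expanding $\det(T - xI)$ along the first column and simplifying yields the characteristic polynomial $x^4 - (k^2 + k - \mu)x^2 + k^2(k - \mu)$, which is a quadratic in $y = x^2$. By Vieta the two roots in $y$ multiply to $k^2(k-\mu)$ and sum to $k^2 + k - \mu$; noting that $y = k^2$ (equivalently $x = k$) is a root forced by $k$-regularity, the second root is $y = k - \mu$. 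Hence the four eigenvalues are $\pm k$ and $\pm \sqrt{k - \mu}$.

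Finally, for the multiplicities, the trivial eigenvalue $k$ has multiplicity $1$ since $X$ is connected, and $-k$ likewise has multiplicity $1$ since $X$ is connected and bipartite. The bipartite symmetry $A \mapsto -A$ (via the sign character on the bipartition) gives a multiplicity-preserving involution on the spectrum, so $\sqrt{k-\mu}$ and $-\sqrt{k-\mu}$ share a common multiplicity; together they account for the remaining $n - 2$ eigenvalues, so each has multiplicity $n/2 - 1$. There is no real obstacle here: the only place where one must be slightly careful is verifying that $k - \mu > 0$ (equivalent to $b_2 > 0$, which holds because the diameter is exactly $3$), so that the four eigenvalues really are distinct and the graph has $d + 1 = 4$ distinct eigenvalues as expected.
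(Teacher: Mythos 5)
Your proof is correct and complete. The paper does not prove this statement at all; it is cited as a known fact from the monograph of Brouwer, Cohen and Neumaier (\cite{BCN}, p.~432), so there is no ``paper's approach'' to compare against. Your derivation is the standard one: bipartiteness forces $a_i=0$ and hence $b_i+c_i=k$, which pins down the intersection array; the distance degrees $k_i = k_{i-1}b_{i-1}/c_i$ are summed (the simplification to $n = 2 + 2k(k-1)/\mu$ checks out algebraically); the characteristic polynomial of the $4\times 4$ tridiagonal intersection matrix is correctly computed as $x^4 - (k^2+k-\mu)x^2 + k^2(k-\mu)$, a quadratic in $x^2$ with roots $k^2$ and $k-\mu$; and the multiplicities follow from connectivity, the $A \mapsto -A$ bipartite spectral symmetry, and the count $2 + 2m = n$. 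Your remark that $k-\mu = b_2 > 0$ because the diameter is exactly $3$ is the right thing to observe to ensure the four eigenvalues are real and distinct.
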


\begin{definition}
A graph $X$ is called a \textit{cocktail-party graph} if it is obtained from a regular complete bipartite graph by deleting one perfect matching. 
\end{definition}

\begin{proposition}\label{prop:bip3}
Let $X$ be a bipartite distance-regular graph of diameter $3$. If $X$ is not a cocktail-party graph, then
\[\motion(X)\geq \frac{1}{12}n.\] 
\end{proposition}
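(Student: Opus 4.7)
The plan is to combine two tools: the bipartite spectral bound of Lemma~\ref{bip-mixing-lemma-tool} and a direct distinguishing-number computation, and then use the integrality constraint on the intersection numbers to reduce to a single extremal case.

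By Fact~\ref{fact1} the eigenvalues of $X$ other than $\pm k$ are $\pm\sqrt{k-\mu}$, so the second-largest eigenvalue in absolute value is $|\lambda_2|=\sqrt{k-\mu}$. Since $X$ is bipartite, $\lambda=0$, so any pair of distinct vertices shares at most $q(X)=\mu$ common neighbors, with equality for distance-$2$ same-side pairs. Lemma~\ref{bip-mixing-lemma-tool} then gives
\[\motion(X)\ge\frac{k-\mu-\sqrt{k-\mu}}{2k}\,n.\]
For the distinguishing bound, a parity argument shows that whenever $u$ and $v$ lie on opposite sides of $X$, every vertex distinguishes them, so $D(u,v)=n$; while for $u,v$ at distance $2$ on the same side only $\{u,v\}\cup(N(u)\triangle N(v))$ distinguishes them, giving $D(u,v)=2(k-\mu)+2$. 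Hence $D_{\min}(X)=2(k-\mu)+2$, and by Lemma~\ref{obs1},
\[\motion(X)\ge 2(k-\mu)+2.\]

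Set $s:=k-\mu\ge 2$ (the non-cocktail-party assumption rules out $s=1$). Integrality of $k_2=k(k-1)/\mu$ is equivalent to $\mu\mid s(s-1)$, which gives $\mu\le s(s-1)$, so $k\le s^2$ and $n\le 2s^2+2s+2$. When $s\le 11$, the elementary inequality $12s+12\ge s^2+s+1$ shows $2s+2\ge n/12$ and we are done by the distinguishing bound alone. When $s\ge 12$, the spectral bound already gives $\motion(X)\ge n/12$ whenever $k\le 6(s-\sqrt s)$; combining the complementary range $k>6(s-\sqrt s)$ with the divisibility $\mu\mid s(s-1)$ and $k\le s^2$ leaves only the extremal parameter set $k=s^2$, $\mu=s(s-1)$, corresponding to the bipartite complement of the incidence graph of a putative projective plane of order $s$.

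The main obstacle is this extremal case, where the distinguishing bound falls just short of $n/12=(s^2+s+1)/6$, so a structural improvement is needed. I would show that no automorphism attains $D_{\min}=2s+2$: a minimizing $\sigma$ would swap some distance-$2$ pair $u,v$ and bijectively match $N(u)\setminus N(v)$ onto $N(v)\setminus N(u)$ while fixing every other vertex. For any $x\in N(u)\setminus N(v)$ this would force $N(x)\setminus\{u\}=N(\sigma(x))\setminus\{v\}$, so $x$ and $\sigma(x)$ would share $k-1$ common neighbors; but they are at distance $2$, so distance-regularity gives $\mu=k-1$, contradicting the non-cocktail-party hypothesis. Iterating this analysis by examining longer orbits of $\sigma$ on the side of $\{u,v\}$ and the induced inclusion-exclusion count of moved vertices on the opposite side then closes the remaining numerical gap in the extremal regime.
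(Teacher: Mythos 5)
You use the same two tools as the paper (the distinguishing bound of Lemma~\ref{obs1} and the bipartite spectral bound of Lemma~\ref{bip-mixing-lemma-tool}), and your observations $D_{\min}(X)=2(k-\mu)+2$ and $\mu\mid s(s-1)$ for $s:=k-\mu$ are correct. But the claimed reduction to a single extremal parameter set $(k,\mu)=(s^2,\,s(s-1))$ is false, and the closing paragraph is a sketch rather than a proof, so the argument has a genuine gap. For $s\geq 12$ your two bounds together cover only values of $\mu$ up to roughly $10s$: the spectral bound requires $\mu\leq 5s-6\sqrt{s}$, and $2s+2\geq n/12$ amounts to $\mu+s(s-1)/\mu\leq 10s+12$. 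Since $\mu$ can be any divisor of $s(s-1)$ up to $s(s-1)$ itself, there are in general many admissible values evading both bounds, not just $\mu=s(s-1)$; for instance with $s=100$ the divisors $1100$, $2475$, $4950$ of $9900$ all do. Moreover the deficit $n/12-(2s+2)$ grows quadratically in $s$, so the local structural improvement you sketch (which at best rules out attaining the exact value $2s+2$) cannot close it.

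What you are missing is the paper's key reduction. Since $X$ is bipartite of diameter $3$, the distance-$3$ graph $Y=X_3$ is the bipartite complement of $X$ within the same bipartition, and $k+k_3=n/2$. If $Y$ is disconnected one checks directly that $X$ is a cocktail-party graph; otherwise $Y$ is itself a bipartite distance-regular graph of diameter $3$ with $\Aut(Y)=\Aut(X)$, so one may pass to $Y$ if necessary and assume $k\leq n/4$. This immediately forces $\mu<\frac{\sqrt{3}}{2}k$ by Lemma~\ref{min-est}, and then a short three-way split on the ratio $\mu/k$ (using Fact~\ref{fact1} together with the two bounds you already have) gives $\motion(X)\geq n/12$ in every case. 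Without this reduction, the estimates you derived are genuinely insufficient when $\mu$ is large relative to $s$.
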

\begin{proof}
Denote the parts of the bipartite graph $X$ by $U$ and $W$. Let $Y = X_3$ be a distance-3 graph of $X$. We consider 2 cases.

\textbf{Case 1.} Suppose that $Y$ is disconnected. Then there exists a pair of vertices $u,v$ in one of the parts, so that $u$ and $v$ lie in different connected components of $Y$. Clearly, $\dist(u,v) = 2$, so $p^{2}_{3,3} = 0$. Hence, $k_3 = 1$, and the pairs of vertices at distance $3$ form a perfect matching. Therefore, $X$ is a regular complete bipartite graph with one perfect matching deleted. 

\textbf{Case 2.} $Y$ is connected and so is itself a distance-regular graph of diameter $3$. Note, that $k+k_3 = n/2$, so if necessary, by passing to $Y$, we may assume that the degree of $X$ satisfies $k\leq n/4$. Lemma~\ref{min-est} implies $\mu<\frac{\sqrt{3}}{2}k$.  The graph $X$ is bipartite, so $\lambda=0$ and by Lemma~\ref{obs1}, 
\begin{equation}\label{eq:bip3eq1}
 \motion(X) \geq D_{\min}(X)\geq 2(k-\mu)\geq (2-\sqrt{3})k>\frac{1}{4}k.
\end{equation}
If $\mu\geq 2k/3$, then by Fact~\ref{fact1}, $n \leq 3k$, so $\motion(X)\geq n/12$. If $k/4\leq \mu<2k/3$, then $n\leq 8k$, and Eq.~\eqref{eq:bip3eq1} implies
\[ \motion(X)\geq 2(k-\mu)\geq \frac{2}{3}k\geq \frac{n}{12}.\]
Finally, assume $\mu\leq k/4$. By Fact \ref{fact1}, the second largest eigenvalue is $\lambda_2 = \sqrt{k-\mu}$. Using that $k\geq 4\mu\geq 4$ and the function $x-\sqrt{x}$ is increasing for $x\geq 1$,  
by Lemma~\ref{bip-mixing-lemma-tool},
\[\motion(X)\geq \frac{k-\mu-\sqrt{k-\mu}}{2k}n\geq \frac{3k-2\sqrt{3k}}{8k}n\geq \frac{3-\sqrt{3}}{8}n\geq \frac{n}{7}.\]
\end{proof}

\subsection{Antipodal graphs of diameter 3} 

\begin{fact}[see {\cite[p. 431]{BCN}}]\label{fact:antipodal-3} Let $X$ be an antipodal distance-regular graph of diameter $d=3$ on $n$ vertices. There exist integers $m\geq 2$, $r\geq 2$ and $t\geq 1$  such that  the following holds.
\begin{itemize}
\item If $\lambda \neq \mu$, then $n = r(k+1)$, $k = mt$, $\mu = (m-1)(t+1)/r$, $\lambda = \mu+t-m$. Moreover, the distinct eigenvalues of $X$  are $k$, $t$, $-1$ and $-m$, with multiplicities $1$, $m(r-1)(k+1)/(m+t)$, $k$, $t(r-1)(k+1)/(m+t)$, respectively.
\item If $\lambda = \mu$, then $n = r(k+1)$, $k =r\mu+1$. The distinct eigenvalues of $X$ are $k$, $\sqrt{k}$, $-1$ and $-\sqrt{k}$. 
\end{itemize} 
\end{fact}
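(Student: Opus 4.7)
The plan is to derive everything from (i) the generic structure of antipodal distance-regular graphs given by Proposition~\ref{prop:impriv-intersection}, and (ii) a direct computation of the characteristic polynomial of the $4\times 4$ intersection matrix. First, I would apply Proposition~\ref{prop:impriv-intersection}(2) with $d=3$: the antipodality relation $b_i=c_{3-i}$ for $i\neq 1$ forces $c_3=b_0=k$ and $b_2=c_1=1$, so the intersection array is $\{k,\,k-1-\lambda,\,1;\,1,\,\mu,\,k\}$ and the cover index is
$$r=1+\frac{b_1}{c_2}=1+\frac{k-1-\lambda}{\mu}, \qquad \text{equivalently,} \qquad (r-1)\mu=k-1-\lambda.$$
Since the folded graph $\widetilde{X}$ has diameter $1$, it is the complete graph $K_{k+1}$, which immediately yields $n=r(k+1)$.

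Next I would compute the characteristic polynomial of
$$T=\begin{pmatrix} 0 & k & 0 & 0 \\ 1 & \lambda & k-1-\lambda & 0 \\ 0 & \mu & k-1-\mu & 1 \\ 0 & 0 & k & 0 \end{pmatrix}.$$
Expanding along the first column and grouping terms (the trivial eigenvalue $k$ always factors out), a short calculation yields
$$\det(xI-T)=(x-k)(x+1)\bigl(x^2-(\lambda-\mu)x-k\bigr).$$
Thus the four distinct eigenvalues of $X$ are $k$, $-1$, and the two roots of $Q(x):=x^2-(\lambda-\mu)x-k$; since $Q(0)=-k<0$, one root is positive and the other negative, and both are real.

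I then split into the two cases. If $\lambda=\mu$, then $Q(x)=x^2-k$ has roots $\pm\sqrt{k}$, and the relation $(r-1)\mu=k-1-\mu$ rearranges to $k=r\mu+1$, completing the second bullet. If $\lambda\neq\mu$, let $t>0>-m$ be the roots of $Q$; Vieta gives $mt=k$ and $t-m=\lambda-\mu$, so $\lambda=\mu+t-m$, and substituting into $(r-1)\mu=k-1-\lambda$ and simplifying yields $\mu=(m-1)(t+1)/r$. For the multiplicities, I use that each eigenvector of $\widetilde{X}\cong K_{k+1}$ lifts (by constancy on fibers) to an eigenvector of $A$, contributing multiplicity $1$ at $k$ and $k$ at $-1$. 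The orthogonal complement---functions summing to zero on every fiber---has dimension $(r-1)(k+1)$ and supports only the eigenvalues $t$ and $-m$; combining the dimension identity $m_t+m_{-m}=(r-1)(k+1)$ with $\operatorname{tr}(A)=0$, which simplifies to $t\,m_t=m\,m_{-m}$, pins down $m_t=m(r-1)(k+1)/(m+t)$ and $m_{-m}=t(r-1)(k+1)/(m+t)$.

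The main obstacle I anticipate is the integrality of $m$ and $t$ in the $\lambda\neq\mu$ case: a priori they are only algebraic integers, as roots of the integer quadratic $Q$. The standard resolution is that if $Q$ were irreducible over $\mathbb{Q}$, its roots would be Galois conjugates and hence have equal multiplicities as eigenvalues of the rational matrix $A$; but the formulas above give $m_t=m_{-m}$ only when $t=m$, which would force $\lambda-\mu=0$, contradicting the case hypothesis. Therefore $Q$ factors over $\mathbb{Q}$, and $t,m\in\mathbb{Z}$ as claimed; the lower bounds $m\geq 2$, $t\geq 1$, $r\geq 2$ then follow because $m=1$ would collapse $\{-1,-m\}$ and cut the number of distinct eigenvalues below $d+1=4$.
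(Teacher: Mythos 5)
The paper does not prove this statement: it is imported as a ``Fact'' with a citation to \cite[p.~431]{BCN}, so there is no in-paper argument to compare against. That said, your derivation is correct and is the standard route one would find in the cited source. A few remarks. Your computation of the intersection array from antipodality and the factorization
$\det(xI-T)=(x-k)(x+1)(x^2-(\lambda-\mu)x-k)$
both check out, as does the multiplicity computation via $m_t+m_{-m}=(r-1)(k+1)$ together with $\operatorname{tr}(A)=0$, and the Galois-conjugacy argument for the integrality of $m,t$ in the $\lambda\neq\mu$ case is exactly the right resolution. One small point worth stating explicitly: you need $-1\notin\{t,-m\}$ to conclude that the multiplicity of $-1$ is exactly $k$ rather than merely at least $k$; your observation that $m=1$ would reduce the number of distinct eigenvalues below $d+1=4$ handles $-m\neq-1$, and $t>0$ handles the other, so the gap is closed, but it belongs in the multiplicity step rather than only as an afterthought about the bound $m\geq 2$. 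Similarly, $r\geq 2$ deserves a word (e.g.\ $r=1$ would force $b_1=0$, contradicting diameter $3$). With those sentences added, the argument is complete and, as far as I can tell, mirrors the Brouwer--Cohen--Neumaier treatment.
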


\begin{proposition}\label{prop:antip-3} Let $X$ be an antipodal distance-regular graph of diameter $d=3$ on $n$ vertices. If $X$ is not a cocktail-party graph, then 
\[ \motion(X)\geq \frac{1}{13}n.\]
\end{proposition}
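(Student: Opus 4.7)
My plan is to handle the two regimes of Fact~\ref{fact:antipodal-3} ($\lambda=\mu$ versus $\lambda\neq\mu$) by combining the combinatorial distinguishing bound (Lemma~\ref{obs1}) with the spectral mixing bound (Lemma~\ref{mixing-lemma-tool}). The first step is to prove the uniform estimate $D_{\min}(X)\geq 2(k-\max(\lambda,\mu))$: for distinct $u,v$ every vertex of $N(u)\triangle N(v)$ distinguishes them, and the size of this symmetric difference is $2(k-\lambda)$, $2(k-\mu)$, or $2k$ according as $\dist(u,v)=1,2,3$—the last case using that in an antipodal graph of diameter~$3$, vertices at distance $3$ lie in the same antipodal class and share no neighbors. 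Lemma~\ref{mixing-lemma-tool} then contributes $\motion(X)\geq n(k-\xi-q)/k$, with $q=\max(\lambda,\mu)$ and $\xi$ the zero-weight spectral radius, which can be read directly from Fact~\ref{fact:antipodal-3}.

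In Case~A ($\lambda=\mu$) I set $\mu=(k-1)/r$, $\xi=\sqrt{k}$, $n=r(k+1)$. The direct bound becomes
\[\frac{\motion(X)}{n}\geq\frac{2((r-1)k+1)}{r^{2}(k+1)},\]
which is $\geq 1/13$ iff $k(26(r-1)-r^{2})\geq r^{2}-26$; since $r^{2}-26r+26$ is negative on $(1.04,24.96)$, this covers every $r\in\{2,\dots,24\}$ at every feasible $k\geq r+1$. For $r\geq 25$ I switch to the spectral bound $\motion(X)/n\geq 1-1/\sqrt{k}-1/r$, which, together with $k\geq r+1\geq 26$, comfortably exceeds $1/13$. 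In Case~B ($\lambda\neq\mu$) I use $k=mt$, $\mu=(m-1)(t+1)/r$, $\lambda=\mu+t-m$. The sub-case $t=1$ forces $r\leq 2$ and $\lambda=0$, hence $r=2$ with $\mu=k-1$, which is the cocktail-party graph, excluded by hypothesis; so $t\geq 2$. Splitting on whether $t<m$ (so $\max(\lambda,\mu)=\mu$, $\xi=m$) or $t>m$ (so $\max(\lambda,\mu)=\lambda$, $\xi=t$), I apply the same small-$r$/large-$r$ dichotomy: the direct bound $D_{\min}\geq 2(k-\max(\lambda,\mu))$ is compared against $n/13=r(k+1)/13$ for $r\leq 24$, and for $r\geq 25$ the spectral bounds $1-1/t-\mu/(mt)\geq 1-1/t-2/r$ (first sub-case) and $1-2/m+1/t-\mu/(mt)$ (second sub-case) exceed $1/13$ once $t\geq 2$ (and $m\geq 2$, $t>m$, in the second). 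The integrality constraints on $\mu$ and the multiplicity divisibility $(m+t)\mid(r-1)(mt+1)$ from Fact~\ref{fact:antipodal-3} sharply restrict the feasible $(m,t,r)$, keeping the remaining parameter patterns easy to check.

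The principal obstacle is the bookkeeping to verify the constant $1/13$ in every feasible parameter configuration. The constant is essentially tight: the worst case occurs in Case~A at $r=24$, $k=25$, $\mu=\lambda=1$ (intersection array $\{25,23,1;1,1,25\}$), where the direct bound evaluates to exactly $1/13$. A subsidiary point is confirming that the single parameter pattern producing genuinely sublinear motion—namely $t=1$ in Case~B—is precisely the cocktail-party family, so the hypothesis exactly excises the obstruction.
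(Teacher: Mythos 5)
Your overall strategy matches the paper's: combine the distinguishing-number bound (Lemma~\ref{obs1}) with the spectral bound (Lemma~\ref{mixing-lemma-tool}), splitting into cases according to Fact~\ref{fact:antipodal-3}. Your Case~A and your identification of $t=1$ with the cocktail-party family are correct. But there is a genuine gap in Case~B, sub-case $t>m$, precisely where the paper spends most of its effort.

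Your claimed spectral estimate there is wrong when $m=2$. With $m=2$ one has $k=2t$, $\xi=t$, $q=\lambda=\mu+t-2$, so
\[
\frac{k-\xi-q}{k}=\frac{2-\mu}{2t},
\]
which is $\le 0$ as soon as $\mu\ge 2$, and your rewritten form $1-2/m+1/t-\mu/(mt)=\frac{2r-t-1}{2rt}$ is negative whenever $t\ge 2r-1$. So the assertion that this ``exceeds $1/13$ once $t\ge2$'' is false. Meanwhile the direct bound in this sub-case behaves like $2(k-\lambda)/n\to\frac{r-1}{r^2}$ as $t\to\infty$, which falls below $1/13$ once $r\ge 12$. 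Thus for $m=2$ and $t$ large neither tool alone gives $1/13$, and the divisibility argument is not a finishing touch but the essential mechanism: from $(t+2)\mid 6(r-1)$ (extracted from the integrality of the multiplicity $2(r-1)(2t+1)/(t+2)$, using $\gcd(t+2,2t+1)\mid 3$) together with $r\mu=t+1$, one deduces $(t-4)r\le 6(t+1)$ and hence $r\le 11$, after which $\frac{r-1}{r^2}\ge\frac{10}{121}>\frac1{13}$. You mention ``integrality constraints \ldots{} keeping the remaining parameter patterns easy to check,'' but you never derive this bound on $r$; without it the $m=2$ sub-case is open. (A smaller point: your $r\le 24$ versus $r\ge 25$ split also misfires for $m=3$, where the direct bound degrades to $\frac{4(r-1)}{3r^2}<\frac1{13}$ for $r\gtrsim 17$, though here the spectral bound $\frac{(m-2)t+m-\mu}{mt}\ge\frac{1}{3}-\frac{\mu}{3t}$ does cover all $r\ge 3$, so the fix is merely to use the spectral tool for $m\ge 3$ at all $r\ge 3$, as the paper does.)
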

\begin{proof} 
\textbf{Case 1.} Suppose that $\lambda\neq \mu$ and $t>m$. Then $\lambda >\mu$ and so
\[ k -q(X)-\xi(X) = tm-\frac{(m-1)(t+1)}{r}-t+m-t \geq t\left(m-2-\frac{m-1}{r}\right).\]
If $m\geq 3$ and $r\geq 3$, then 
\[ k -q(X)-\xi(X) \geq t\left(m-2-\frac{m-1}{3}\right)= t\left(\frac{2}{3}m-\frac{5}{3}\right)\geq \frac{1}{9}tm = \frac{k}{9}.\]
If $r=2$ and $m\geq 4$, then 
\[ k-q(X)-\xi(X)\geq t\left(m-2-\frac{m-1}{2}\right)= t\left(\frac{1}{2}m-\frac{3}{2}\right)\geq \frac{1}{8}tm= \frac{k}{8}.\]
Therefore, in both of these situations, by Lemma~\ref{mixing-lemma-tool}, 
\[ \motion(X)\geq \frac{n}{9}.\]
If $r=2$ and $m=3$, then $n=6t+2$, $k=3t$, $\mu = t+1$ and $\lambda = 2t-2$. Note that by Lemma~\ref{obs1}, in this case
\[ \motion(X)\geq D_{\min}(X)\geq \min(2(k-\lambda), 2(k-\mu)) = 2(t+2)\geq \frac{n}{3}.\]
Finally, if $m=2$, then $k = 2t$, $\mu = (t+1)/r$ is integer, and the multiplicity of $t$ as an eigenvalue is an integer number equal $2(r-1)(2t+1)/(t+2)$. Thus we may conclude that 
\[ (t+2)\mid 2(r-1)(2t+4-3) \quad \Rightarrow \quad (t+2)\mid 6(r-1) \quad \Rightarrow \]
\[ \Rightarrow \quad (t+2)\mid 6\left(\frac{t+1}{\mu}-1\right) \quad \Rightarrow \quad (t+2)\mid 6(t+1-\mu) \quad \Rightarrow \quad (t+2)\mid 6(\mu+1). \]
Hence, in particular, 
\[(t+2)\leq 6\left(\frac{t+1}{r}+1\right), \quad \text{so}\quad (t-4)r\leq 6t+6.\]
If $t\geq 10$, we get $r\leq 11$. If $t<10$, then $r= (t+1)/\mu \leq t+1<11$.
Therefore, by Lemma~\ref{obs1}, 
\[\motion(X)\geq D_{\min}(X)\geq 2(k-\lambda) = 2\left(t+2-\frac{t+1}{r}\right)\geq \frac{r-1}{r^2}n\geq \frac{n}{13}.\]
\noindent\textbf{Case 2.} Suppose that $\lambda\neq \mu$ and $m>t$. Then $\lambda <\mu$ and so
\[ k -q(X)-\xi(X) = tm-\frac{(m-1)(t+1)}{r}-m \geq m\left(t-1-\frac{t+1}{r}\right). \]
If $r\geq 4$ and $t\geq 2$, we get
\[ k -q(X)-\xi(X) \geq m\left(t-1-\frac{t+1}{4}\right)= m\left(\frac{3}{4}t-\frac{5}{4}\right)\geq \frac{1}{8}mt = \frac{k}{8}. \]
Therefore,  by Lemma~\ref{mixing-lemma-tool}, 
\[ \motion(X)\geq \frac{n}{8}.\]
If $r\leq 4$ and $t\geq 2$, then $n \leq 4(k+1)$, and by Lemma~\ref{obs1},
\[ \motion(X)\geq D_{\min}(X)\geq \min(2(k-\lambda), 2(k-\mu)) \geq\]
 \[ \geq 2\left(mt-\frac{(m-1)(t+1)}{2}\right) \geq 2\left(mt-\frac{m(t+1)}{2}\right) \geq 2\left(mt-\frac{3mt}{4}\right) = \frac{k}{2}\geq \frac{n}{12}.\]
 Finally, if $t=1$, then $\lambda\geq 0$ implies $r=2$. Hence, we obtain  $n = 2(k+1)$, $\mu = k-1$ and $\lambda = 0$. It follows that $X$ is a cocktail-party graph in this case.
 
 \noindent\textbf{Case 3.} Assume $\lambda = \mu$. Then by Fact~\ref{fact:antipodal-3}, $n = r(k+1)$, $k = r\mu+1$ and $\xi(X) = \sqrt{k}$. By Lemma~\ref{mixing-lemma-tool}, for $r\geq 4$ 
 \[ \motion(X)\geq \frac{k-\mu-\sqrt{k}}{k}n\geq \frac{(r-1)\mu+1-\mu\sqrt{r+1}}{r\mu+1}n\geq \frac{r-\sqrt{r+1}-1}{r}n\geq \frac{n}{6}.\]
 At the same time, since $\lambda = \mu$, for $2\leq r\leq 3$,  by Lemma~\ref{obs1},
 \[ \motion(X)\geq D_{\min}(X)\geq  2(k-\mu) \geq  \frac{r-1}{r}k\geq \frac{2(r-1)}{3r^2}n\geq \frac{4}{27}n.\]
\end{proof}

\subsection{Collecting together the analysis for imprimitive graphs }

In this section we collect analysis of the imprimitive case into Theorem~\ref{thm:main-imprimitive}. In the proof we use the following result about antipodal covers proved by Van Bon and Brouwer~\cite{antipodal-covers}.

\begin{theorem}[Van Bon, Brouwer]\label{thm:classical-covers} \

\begin{enumerate}
\item The Hamming graph $H(d, s)$ has no distance-regular antipodal covers, except for $H(2, 2)$, the quadrangle, which is covered by the octagon.
\item The Johnson graph $J(s, d)$ has no distance-regular antipodal covers for $d\geq 2$.
\item The complement $\overline{J(s, 2)}$ has no distance-regular antipodal covers for $s\geq 8$.
\item The complement $\overline{H(2, s)}$ has no distance-regular antipodal covers for $s\geq 4$. 
\end{enumerate}
\end{theorem}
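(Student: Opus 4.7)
The plan is, for each of the four families $\widetilde{X}$, to write down the candidate intersection array of a putative distance-regular antipodal $r$-cover $X$ and to rule out all but the stated exceptions via standard feasibility constraints on intersection numbers and eigenvalue multiplicities. The main tool is Proposition~\ref{prop:impriv-intersection}: for an antipodal $r$-cover $X$ of diameter $D\in\{2t, 2t+1\}$ with folded graph $\widetilde{X}$ of diameter $t$, the mirror relations $b_i(X) = c_{D-i}(X)$ for $i\neq t$, combined with $\iota(\widetilde{X}) = \{b_0, \ldots, b_{t-1}; c_1, \ldots, c_{t-1}, \gamma c_t\}$, determine $\iota(X)$ completely from the discrete data $(r, D)$. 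Given $\iota(X)$ I would then compute the distinct eigenvalues of the tridiagonal matrix $T(X)$ and their multiplicities $m_i$ via the standard formula $m_i = n / \sum_{j=0}^{D} k_j u_j(\theta_i)^2$ from Section~\ref{sec:dist-reg-graphs}; positivity of all $b_i, c_i$ together with integrality of every $m_i$ then yields a Diophantine system in $r$, $s$, $d$ and the parity of $D$.

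For part (1), where $\widetilde{X} = H(d,s)$ has the closed-form spectrum with eigenvalues $d(s-1) - js$ and multiplicities $\binom{d}{j}(s-1)^j$, the multiplicities of $X$ can be written as rational functions of $r$, $s$, $d$ and become non-integer for every choice except $(r,s,d) = (2,2,2)$, which yields the genuine cover $C_8 \to C_4$. For part (2), the quadratic growth $c_i = i^2$ of the Johnson array makes the required mirror condition at the center incompatible with the explicit values of $c_i$ and $b_i$ inherited from the folded graph, for every $r \geq 2$ as soon as $d \geq 2$. For parts (3) and (4), where $\widetilde{X}$ is strongly regular so $D \in \{3,4\}$ and $X$ has at most five distinct eigenvalues, one writes these eigenvalues and their multiplicities as explicit functions of $r$ and the strongly-regular parameters of $\widetilde{X}$; integrality of the multiplicities and the Krein conditions then eliminate all $s$ above the quoted thresholds.

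The main obstacle is the bookkeeping: ensuring that the case analysis in each family is genuinely exhaustive, and separately checking the handful of small-$s$ covers that survive the generic bound. For (1) and (2) the closed-form spectra of the Hamming and Johnson schemes make the failure of integrality visible from a single polynomial identity in $r$, $s$, $d$, so the argument is essentially mechanical. For (3) and (4) the obstacle is more delicate, since sporadic small-parameter covers do exist (the dodecahedron covering the Petersen graph $\overline{J(5,2)}$ is the canonical example, sitting just below the threshold in~(3)); these exceptions must be verified by hand, and it is precisely their enumeration that forces the bounds $s \geq 8$ and $s \geq 4$ rather than any cleaner universal constant.
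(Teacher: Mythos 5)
This statement is not proved in the paper: it is quoted as a known result of Van Bon and Brouwer (reference \cite{antipodal-covers}), so there is no internal proof to compare against. Your sketch of a feasibility-based argument is in the right spirit, but as written it has two concrete gaps.

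First, the opening reduction is incorrect for odd-diameter covers. From Proposition~\ref{prop:impriv-intersection}, when $D=2t$ the folded array's last entry is $r c_t$, so $c_t$ and hence $b_t=(r-1)c_t$ are pinned down and $\iota(X)$ really is determined by $(r,\iota(\widetilde X))$. But when $D=2t+1$ the mirror relation $b_i=c_{D-i}$ holds only for $i\neq t$, and the folded array supplies $c_1,\dots,c_t$ (with $\gamma=1$), which leaves \emph{both} $b_t$ and $c_{t+1}$ undetermined; the single relation $r=1+b_t/c_{t+1}$ does not fix them. So the cover is not ``determined completely from the discrete data $(r,D)$'' in the odd case, and the Diophantine system you plan to set up is missing a free parameter. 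This matters: the odd case does occur (the dodecahedron is a $2$-cover of the Petersen graph of diameter $5=2\cdot2+1$), and your argument for part~(2) — which claims the mirror condition ``at the center'' is incompatible with $c_i=i^2$ — is exactly an argument about the data at $i=t$, which is the data you do not actually have.

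Second, the substance of the theorem is asserted rather than derived. Claims such as ``the multiplicities become non-integer for every choice except $(r,s,d)=(2,2,2)$'' and ``integrality and the Krein conditions eliminate all $s$ above the quoted thresholds'' are the content of the theorem; nothing in the proposal exhibits the polynomial identity or carries out the case analysis that would make them true. In particular you offer no computation showing why $s\geq 8$ and $s\geq 4$ are the correct thresholds rather than, say, $7$ and $3$, beyond the observation that the dodecahedron exists at $s=5$. As it stands this is a reasonable plan for a proof, not a proof, and the plan has a structural flaw in the odd-diameter case that would have to be repaired before the computations could even be set up.
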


\begin{theorem}\label{thm:main-imprimitive} Assume Conjecture~\ref{conj-dist-reg} is true. Then for any $d\geq 3$ there exists $\widetilde{\gamma}_d>0$, such that for any distance-regular graph $X$ of diameter $d$ on $n$ vertices either
$$\motion(X)\geq \widetilde{\gamma}_d n,$$
or $X$ is a Johnson graph $J(s, d)$, or a Hamming graph $H(d, s)$, or a cocktail-party graph. 
\end{theorem}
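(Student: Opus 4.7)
The plan is to split on primitivity and then reduce the imprimitive case to the primitive case via the two standard operations (halving and folding), using Smith's dichotomy together with Proposition~\ref{prop:antip-prelim} to guarantee that after at most one halving and one folding we obtain a primitive distance-regular graph. If $X$ is primitive, Conjecture~\ref{conj-dist-reg} directly gives the required dichotomy (motion linear, or $X\in\{J(s,d),H(d,s)\}$), so henceforth assume $X$ is imprimitive and therefore, by Smith's theorem, bipartite or antipodal (or both).

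The small-diameter cases $d=3,4$ will be handled by the explicit propositions already in Section~\ref{sec:imprimitive}, since these are exactly the diameters where the reduction can degenerate to a complete graph and where the only genuinely new exception, the cocktail-party graph, arises: Proposition~\ref{prop:bip3} covers bipartite $d=3$ (yielding the cocktail-party exception), Proposition~\ref{prop:antip-3} covers antipodal $d=3$ (where the potential cocktail-party output is bipartite and thus already accounted for), and Proposition~\ref{prop:bip-antip4} covers the bipartite-antipodal $d=4$ case without any exception. The remaining diameter-$4$ subcases are either bipartite with a primitive halved graph, for which Theorem~\ref{thm:bipgeq4} applies directly, or antipodal and non-bipartite, whose folded graph is strongly regular and is handled together with the $d\geq 5$ analysis below.

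For $d\geq 5$: if $X$ is bipartite, Proposition~\ref{prop:antip-prelim}(2) provides a primitive halved graph (in both the odd-diameter bipartite-antipodal and the non-antipodal cases), and Theorem~\ref{thm:bipgeq4} yields a linear motion bound with no exceptions. If $X$ is antipodal and non-bipartite, Proposition~\ref{prop:antip-prelim}(3) gives a primitive folded graph $\widetilde X$, and Proposition~\ref{prop:reduction-antip} transfers a linear motion lower bound from $\widetilde X$ to $X$. In the doubly-imprimitive case of even $d\geq 6$, Proposition~\ref{prop:antip-prelim}(4) tells us that the halved graphs are antipodal and non-bipartite of diameter $d/2\geq 3$; a single further fold reaches a primitive graph, and a combination of Propositions~\ref{prop:bipartite-reduction} and~\ref{prop:reduction-antip} then transfers the motion bound back to $X$. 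In every scenario the final primitive graph $Y$ has diameter at least $2$, so we apply Conjecture~\ref{conj-dist-reg} when $\mathrm{diam}(Y)\geq 3$ and Babai's Theorem~\ref{babai-str-reg-thm} when $\mathrm{diam}(Y)=2$.

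The main obstacle is to ensure that none of the exceptional outputs of these invocations lifts to a new family of exceptions for $X$. A union-of-cliques output is immediately impossible since $Y$ is connected. If $Y$ or its complement $\overline Y$ is a Johnson or Hamming graph, the reduction path makes either $X$ or its antipodal halved graph a distance-regular antipodal cover of $Y$; Theorem~\ref{thm:classical-covers} of Van Bon and Brouwer asserts that no such covers exist, except for $H(2,2)$ (covered by the octagon) and for graphs whose parameter $s$ is bounded in terms of the diameter. Only finitely many such small-parameter graphs occur for each fixed $d$, and they are absorbed into the constant $\widetilde\gamma_d$ by taking it small enough that the trivial bound $\motion(X)\geq 1$ already satisfies $\motion(X)\geq \widetilde\gamma_d n$ for all such bounded-size $X$.
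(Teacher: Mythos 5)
Your overall architecture matches the paper's: Smith's dichotomy, explicit propositions for small diameters, reduction to a primitive quotient for larger diameters, and Van Bon--Brouwer to rule out exceptional lifts, with bounded-size cases absorbed into the constant. However, there is a concrete gap in the doubly-imprimitive even-diameter branch that stems from your choice of reduction order.

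You reduce $X$ (bipartite and antipodal, even $d\geq 6$) by halving first, obtaining an antipodal non-bipartite graph of diameter $d/2$, and then claim that one further fold reaches a primitive graph $Y$ with $\mathrm{diam}(Y)\geq 2$. That last claim fails at $d=6$: the halved graph has diameter $3$, and folding a diameter-$3$ antipodal graph gives diameter $\lfloor 3/2\rfloor=1$, a complete graph. Neither Conjecture~\ref{conj-dist-reg} nor Theorem~\ref{babai-str-reg-thm} says anything useful about a complete graph, and Proposition~\ref{prop:reduction-antip} gives nothing because the motion of a complete graph is only $2$. The paper avoids this by \emph{folding first}: the folded graph $\widetilde X$ is bipartite (and not antipodal) of diameter $d/2\geq 3$, and then the self-contained bipartite results (Proposition~\ref{prop:bip3} when $d/2=3$, Theorem~\ref{thm:bipgeq4} when $d/2\geq 4$) are applied directly to $\widetilde X$ without any further reduction that could degenerate. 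Your route is repairable --- at $d=6$ you should stop at the diameter-$3$ antipodal halved graph and invoke Proposition~\ref{prop:antip-3} on it directly (it cannot be the cocktail-party exception, which is bipartite, while halved graphs of bipartite graphs are non-bipartite by Proposition~\ref{prop:antip-prelim}(1)) --- but as written, the assertion that the final primitive graph always has diameter at least $2$ is false, and the subsequent invocation of the conjecture rests on it. A minor further inaccuracy: the exceptional ranges in Theorem~\ref{thm:classical-covers} ($s<8$ for $\overline{J(s,2)}$, $s<4$ for $\overline{H(2,s)}$) are absolute constants, not ``bounded in terms of the diameter,'' though this does not affect the finiteness argument.
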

\begin{proof} Assume that Conjecture~\ref{conj-dist-reg} is true and $\gamma_d>0$ is a constant provided by the conjecture. If $X$ is primitive, then there is nothing to prove. If $X$ is bipartite and not antipodal of diameter $d\geq 4$, then by Theorem~\ref{thm:bipgeq4} $\motion(X)\geq \gamma_d' n$. If $X$ is bipartite (possibly antipodal) graph of diameter $d=3$, then by Theorem~\ref{prop:bip3}, $X$ is either a cocktail-party graph, or $\motion(X)\geq n/12$. 

If $X$ is bipartite and antipodal of even diameter $d\geq 6$, then by Proposition~\ref{prop:antip-prelim}, folded graph $\widetilde{X}$ is bipartite (and not antipodal) of diameter $d/2$. So $$\motion(\widetilde{X})\geq \min\left(\gamma_{d/2}', \frac{1}{12}\right) |V(\widetilde{X})|$$ (we use that cocktail-party graph is antipodal). Therefore, by Proposition~\ref{prop:reduction-antip}, $$\motion(X)\geq \min\left(\gamma_{d/2}', \frac{1}{12}\right)n.$$
In the case when $X$ is bipartite and antipodal of diameter $d=4$, by Proposition~\ref{prop:bip-antip4}, $$\motion(X)\geq 0.15 n.$$

We still need to analyze the cases when $X$ is antipodal, but not bipartite, or when $X$ is antipodal of odd diameter. By Proposition~\ref{prop:antip-prelim}, in these cases, folded graph of $X$ is primitive. If diameter of $X$ is 3, then by Proposition~\ref{prop:antip-3}, $\motion(X)\geq n/13$. If diameter of $X$ is $d\geq 4$, then by Proposition~\ref{prop:antip-prelim}, folded graph $\widetilde{X}$ is primitive of diameter $\displaystyle{\widetilde{d} = \left\lfloor d/2\right\rfloor\geq 2}$. Since $\widetilde{X}$ is primitive, $\widetilde{X}$ is not the complement to a disjoint union of cliques. If $\widetilde{X}$ has at least $29$ vertices, and $X$ is not a Johnson graph $J(s, d)$, the Hamming graph $H(d, s)$, or the complement of $J(s, 2)$ or $H(2, s)$, then by Theorem~\ref{babai-str-reg-thm} and the assumption that Conjecture~\ref{conj-dist-reg} is true, $$\motion(\widetilde{X})\geq \min\left(\gamma_{\widetilde{d}}, \frac{1}{8}\right)|V(\widetilde{X})|.$$
 Therefore, in this case, by Theorem~\ref{prop:reduction-antip}, $$\motion(X)\geq \min\left(\gamma_{\widetilde{d}}, \frac{1}{8}\right) n.$$

Finally we note, that by Theorem~\ref{thm:classical-covers}, if $Y$ is the Johnson graph $J(s, d)$, the Hamming graph $H(d, s)$, or the complement of $J(s, 2)$ or $H(2, s)$ and $Y$ has at least $29$ vertices, then $Y$ has no antipodal covers. In the case when $Y$ has at most $28$ vertex, $\motion(Y)\geq |Y|/14$. So by Proposition~\ref{prop:reduction-antip}, if the graph $X$ on $n$ vertices is a distance-regular antipodal cover of such $Y$, then $\motion(X)\geq n/14$.

Taking $\displaystyle{\widetilde{\gamma} = \min\left(\gamma_d, \gamma_d', \gamma_{d/2}', \gamma_{\lfloor d/2\rfloor}, \frac{1}{14}\right)}$ we get the desired statement.  
\end{proof}

\section{Appendix: Explicit bounds for $FE(\delta)$ and $BE(\delta)$}\label{sec:appendix}

In this section we compute expilicit lower bounds for $BE(\delta)$, $FE(\delta)$ and $EPS_{\delta}$ sequences.

\begin{lemma}\label{lem:estimation-alpha} Fix $0<\delta\leq \frac{1}{9}$. Let $(\alpha_i)_{i=0}^{\infty}$ be the $FE(\delta)$ sequence and the corresponding $BE(\delta)$ sequence $(\beta_i)_{i=2}^{\infty}$. Then for $j\geq 1$
\[ \alpha_j\geq \frac{(1-\delta)^2}{2} j^{-\log_2(j)} \quad \text{and} \quad \beta_{j+2} \geq \frac{(1-\delta)^3}{2(j+1)} j^{-\log_2(j)}.\]
\end{lemma}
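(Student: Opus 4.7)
The plan is to pass to reciprocals $T_j = 1/\alpha_j$ and establish the matching upper bound
\[
T_j \le \frac{2\, j^{\log_2 j}}{(1-\delta)^2} \qquad (j \ge 1)
\]
by strong induction on $j$. Once this is in hand, the estimate for $\beta_{j+2}$ falls out of its definition as a reciprocal of a partial sum of the $T_i$.

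Setting $S_m = \sum_{t=0}^{m-1} T_t$ and using $\lceil (j+2)/2\rceil + \lfloor (j+2)/2\rfloor = j+2$, the FE recurrence rewrites as
\[
T_{2m+1} = \frac{2 S_{m+1}}{1-\delta}, \qquad T_{2m} = \frac{2 S_m + T_m}{1-\delta}.
\]
The only combinatorial input I will use is that $f(k) = k^{\log_2 k}$ is increasing for $k\ge 1$, which gives the crude bound $\sum_{k=1}^{m} f(k) \le m \cdot f(m) = m^{1+\log_2 m}$. Plugging the inductive hypothesis into the first recurrence (for $j = 2m+1$, $m \ge 2$) yields
\[
T_{2m+1} \le \frac{2}{1-\delta} + \frac{4\, m^{1+\log_2 m}}{(1-\delta)^3}.
\]
Comparing with the target $\frac{2(2m+1)^{\log_2(2m+1)}}{(1-\delta)^2}$ and using the identity $(2m)^{\log_2(2m)} = 2\, m^{2+\log_2 m}$, it suffices to check
\[
2(1-\delta)^2 + 4\, m^{1+\log_2 m} \le 4(1-\delta)\, m \cdot m^{1+\log_2 m},
\]
which reduces to $4((1-\delta)m-1) \ge$ a small constant, easily satisfied since $(1-\delta)m - 1 \ge 7/9$ whenever $m\ge 2$ and $\delta \le 1/9$. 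The even case $j = 2m$, $m\ge 2$ is parallel: the same estimate for the sum yields the same upper bound on $T_{2m}$, and the same polynomial inequality carries through. The base cases $j = 1, 2, 3$ (and $j=4$ to start the even induction at $m\ge 2$) are verified by direct substitution, each reducing to a one-line check valid for $\delta \le 1/9$.

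For $\beta_{j+2}$, observe directly that $\beta_{j+2} = (1-\delta)/S_{j+1}$. Using the just-proved bound on $T_k$ and again $\sum_{k=1}^{j} k^{\log_2 k} \le j^{1+\log_2 j}$ we obtain
\[
\frac{1}{\beta_{j+2}} \le \frac{1}{1-\delta} + \frac{2\, j^{1+\log_2 j}}{(1-\delta)^3},
\]
and the desired inequality $\tfrac{1}{\beta_{j+2}} \le \tfrac{2(j+1) j^{\log_2 j}}{(1-\delta)^3}$ collapses to the trivial $1 \le 2\, j^{\log_2 j}$.

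The main obstacle is purely bookkeeping: the crude sum bound loses a polylogarithmic factor, so the induction only closes because the jump from $(2m)^{\log_2(2m)}$ to $(2m+1)^{\log_2(2m+1)}$ (and more generally the extra $m$-factor in the exponent) is large enough to absorb both this slack and the extra $(1-\delta)$ lost in each application of the recurrence. Keeping this margin positive forces the hypothesis $\delta \le 1/9$ (and the prefactor $(1-\delta)^2/2$ rather than $(1-\delta)/2$); the delicate part of the write-up will be exhibiting the constants explicitly in the small cases so that the inductive inequality really does hold at the boundary $m = 2$.
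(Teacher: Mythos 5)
Your proof is correct and follows essentially the same approach as the paper: an induction exploiting the halved index in the FE recurrence, bounding the $(j+2)$-term reciprocal sum by its length times its largest term, then reducing the inductive step to an elementary polynomial inequality in which the $\delta\le 1/9$ hypothesis and the margin between consecutive values of $k^{\log_2 k}$ provide exactly the needed slack. The only cosmetic difference is that you pass to reciprocals $T_j = 1/\alpha_j$ and plug the inductive hypothesis termwise into the sum before invoking monotonicity of $k^{\log_2 k}$, whereas the paper uses monotonicity of the $\alpha_i$ themselves to write $\sum 1/\alpha_{t} \le (j+2)/\alpha_{\lceil j/2\rceil}$ in one step and only then applies the inductive bound at a single index.
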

\begin{proof}
We prove the statement of the lemma by induction. Indeed, for $j=1,2$ we have $\alpha_1 = \frac{1-\delta}{2}$ and $\alpha_2 \geq \frac{(1-\delta)^2}{4}$, so the inequality is true. For $j\geq 2$, we have
\[\alpha_{j+1} = (1-\delta)\left( \sum\limits_{t=1}^{\lceil \frac{j+2}{2} \rceil}\frac{1}{\alpha_{t-1}}+\sum\limits_{t=1}^{\lfloor \frac{j+2}{2}\rfloor}\frac{1}{\alpha_{t-1}} \right)^{-1}\geq \frac{(1-\delta)}{j+2}\alpha_{\lceil \frac{j}{2} \rceil}\geq \]
\[ \geq  \frac{(1-\delta)^3}{2(j+2)}  \left(\frac{j+1}{2}\right)^{-\log_2\left(\frac{j+1}{2}\right)} =  \frac{(1-\delta)^3 2^{\log_2\left(\frac{j+1}{2}\right)}}{2(j+2)}  (j+1)^{-\log_2(j+1)+1}= \]
\[ = \frac{(1-\delta)(j+1)^2}{2(j+2)}\frac{(1-\delta)^2}{2} (j+1)^{-\log_2(j+1)} \geq \frac{(1-\delta)^2}{2} (j+1)^{-\log_2(j+1)}. \]
 Thus, 
 \[\beta_{j+2} = (1-\delta)\left(\sum\limits_{t = 0}^{j}\frac{1}{\alpha_t}\right)^{-1} \geq \frac{1-\delta}{j+1}\alpha_j\geq \frac{(1-\delta)^3}{2(j+1)} j^{-\log_2(j)}.\]

\end{proof}

\begin{lemma}\label{lem:eps-estimation} Let $0<\delta\leq 1/9$ and $d\geq 3$. Then $$EPS_{\delta}(d) \geq \left(\frac{\delta}{22}\right)^{(d+1)}d^{-(d+1)(3+\log_2 d)}.$$
\end{lemma}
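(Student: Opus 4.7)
The plan is to exhibit the candidate value $\varepsilon_0 := \left(\delta/22\right)^{d+1} d^{-(d+1)(3+\log_2 d)}$ and verify directly that it is $(\delta, d)$-compatible; by definition of $EPS_\delta(d)$ this immediately yields the lemma. Since $(\delta, d)$-compatibility means $(\delta, d-2, \widehat{\alpha}, d)$-compatibility with $\widehat{\alpha}$ the $FE(\delta)$ sequence, I would need to check the two inequalities
\begin{equation*}
\frac{\alpha_{d-2}-5\varepsilon_0}{\alpha_{d-2}-\varepsilon_0} - 2\varepsilon_0 \sum_{t=1}^{d-1}\frac{1}{\alpha_{t-1}} > 1-\delta \quad \text{and} \quad 2(d+2)^2 \varepsilon_0^{1/(d+1)} \leq \beta_d \delta.
\end{equation*}
The main inputs are the lower bounds $\alpha_{d-2}\geq \tfrac{(1-\delta)^2}{2}(d-2)^{-\log_2(d-2)}$ and $\beta_d\geq \tfrac{(1-\delta)^3}{2(d-1)}(d-2)^{-\log_2(d-2)}$ from Lemma~\ref{lem:estimation-alpha}, together with the elementary monotonicity estimate $(d-2)^{\log_2(d-2)}\leq d^{\log_2 d}$ for $d\geq 3$.

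First I would handle the spectral (second) inequality, which is the binding constraint. A direct computation gives $\varepsilon_0^{1/(d+1)} = (\delta/22)\,d^{-(3+\log_2 d)}$, so the inequality to prove, after cancelling $\delta$ and cross-multiplying, becomes
\begin{equation*}
\frac{2(d-1)(d+2)^2}{11(1-\delta)^3 d^3} \leq \frac{d^{\log_2 d}}{(d-2)^{\log_2(d-2)}}.
\end{equation*}
The right-hand side is at least $1$ by the elementary estimate noted above, so it suffices to bound the left-hand side by $1$. Using $(d-1)(d+2)^2 \leq d\cdot (5d/3)^2 = 25d^3/9$ (valid because $d+2\leq 5d/3$ for $d\geq 3$) together with $(1-\delta)^3\geq (8/9)^3 = 512/729$ for $\delta\leq 1/9$, one gets $\tfrac{50/9}{11\cdot 512/729} = \tfrac{50\cdot 729}{9\cdot 11\cdot 512} \approx 0.72 < 1$.

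Next I would verify the combinatorial (first) inequality, which is considerably looser. Since $\varepsilon_0$ will be far smaller than $\alpha_{d-2}$, I would use $\frac{\alpha_{d-2}-5\varepsilon_0}{\alpha_{d-2}-\varepsilon_0}\geq 1-\frac{8\varepsilon_0}{\alpha_{d-2}}$ and the monotonicity estimate $\sum_{t=1}^{d-1} 1/\alpha_{t-1}\leq (d-1)/\alpha_{d-2}$, reducing matters to $2\varepsilon_0(d+3)/\alpha_{d-2}\leq \delta$. With $\alpha_{d-2}\geq \tfrac{(1-\delta)^2}{2}d^{-\log_2 d}$, this amounts to $\varepsilon_0\leq \frac{\delta(1-\delta)^2}{4(d+3)}\, d^{-\log_2 d}$, which is weaker than the spectral bound already established by a factor of roughly $(\delta/22)^d d^{-d(3+\log_2 d)}$, and is therefore automatic.

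The only real point of care is the case $d=3$ in the spectral inequality, where the numerical constants $25/9$ and $11\cdot 512/729$ are close enough that the slack is only a factor of about $1/0.72$; for all larger $d$ the left-hand constant $(1+2/d)^2$ decreases toward $1$ while the target $11(1-\delta)^3/2$ stays fixed, so the bound becomes increasingly comfortable.
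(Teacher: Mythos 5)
Your proposal is correct and follows essentially the same route as the paper: plug the explicit candidate $\varepsilon_0$ into both conditions of $(\delta,d-2,\widehat\alpha,d)$-compatibility and verify them via the lower bounds of Lemma~\ref{lem:estimation-alpha}, treating the spectral condition as the binding one. The only cosmetic difference is that the paper conservatively checks the spectral condition against $\beta_{d+2}$ (sufficient since $\widehat\beta$ is decreasing) while you use the exact index $\beta_d$ called for by the definition; your numerical estimates are a touch sharper but the argument is the same.
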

\begin{proof} Note that for the inequality $\displaystyle{2(d+2)^2\varepsilon^{\frac{1}{d+1}} \leq \beta_{d+2}\delta}$ to be satisfied it is enough to have
\[ \varepsilon\leq \left(\frac{2^7\delta d^{-\log_2 d}}{9^3(d+1)(d+2)^2}\right)^{d+1} \leq \left(\frac{\delta\beta_{d+2}}{2(d+2)^2}\right)^{d+1}.\]
In particular, this is true if
\[ 0<\varepsilon\leq \left(\frac{\delta}{22}\right)^{(d+1)}d^{-(d+1)(3+\log_2 d)}.\]

To check that the other condition on $\varepsilon$ is satisfied, note that such choice of $\varepsilon$ satisfies $\varepsilon<\alpha_{d-2}/2$. Thus
we have
\[  \left(\frac{\alpha_{d-2}-5\varepsilon}{\alpha_{d-2}-\varepsilon} - 2\varepsilon \sum\limits_{t=1}^{d-1}\frac{1}{\alpha_{t-1}}\right)\geq 1 - 10\alpha_{d-2}^{-1}\varepsilon-2d\alpha_{d-2}^{-1}\varepsilon \geq\]
\[ \geq 1 - \frac{2\varepsilon}{(1-\delta)^2}(2d+10)d^{\log_2 d}  \geq 1 - 22d^{1+\log_2 d}\varepsilon >(1-\delta).\]
\end{proof}


\begin{thebibliography}{20}

\bibitem{Babai-annals}
L\'aszl\'o Babai. ``On the order of uniprimitive permutation groups." \textit{Ann. Math.} \textbf{113.3} (1981): 553-568.

\href{http://people.cs.uchicago.edu/~laci/papers/uni-update.pdf}%
{(An updated version of the paper appears on the author's website: http://people.cs.uchicago.edu/~laci/papers/uni-update.pdf.)}


\bibitem{Babai-str-reg}
L\'aszl\'o Babai. ``On the Automorphism Groups of Strongly Regular Graphs I."  \textit{Proc. 5th Innovations in Theoretical Comp. Sci. conf.} \textit{(ITCS'14), ACM}, (2014): 359-368.

\bibitem{Babai-str-reg-2}
L\'aszl\'o Babai. ``On the automorphism groups of strongly regular graphs II." \textit{J. Algebra} \textbf{421} (2015): 560-578.








\bibitem{BCN}
Andries E. Brouwer, Arjeh M. Cohen, and Arnold Neumaier. \textit{``Distance-Regular Graphs.''} Springer, 1989.

\bibitem{delsarte-geom}
Sejeong Bang, Akira Hiraki, and Jacobus H. Koolen. ``Delsarte clique graphs.'' \textit{European J. of Combin.} \textbf{28.2} (2007): 501-516.

\bibitem{Bang-diam-3}
Sejeong Bang. ``Geometric distance-regular graphs without 4-claws." \textit{Linear Alg. Appl.} \textbf{438.1} (2013): 37-46.

\bibitem{Bang-Koolen-diam-3}
Sejeong Bang, and Jacobus H. Koolen. ``On geometric distance-regular graphs with diameter three." \textit{European J. Combinatorics} \textbf{36} (2014): 331-341.

\bibitem{bannai-ito}
Sejeong Bang, Arturas Dubickas, Jack H. Koolen, and Vincent Moulton. ``There are only finitely many distance-regular graphs of fixed valency greater than two.'' \textit{Advances in Mathematics} \textbf{269} (2015): 1-55.

\bibitem{biggs-gardiner} Norman L. Biggs and Anthony Gardiner ``The classification of distance transitive graphs.'' Manuscript (1974).

\bibitem{Bochert}
Alfred Bochert.``Ueber die Classe der transitiven Substitutionengruppen." \textit{Mathematische Annalen} \textbf{40.2} (1892): 176-193.



\bibitem{regular-classif} 
Peter J. Cameron, Jean-Marie Goethals, Johan Jacob Seidel, E.E. Shult. ``Line graphs, root systems and elliptic geometry.'' \textit{J. Algebra} \textbf{43} (1976): 305-327.



\bibitem{Koolen-survey}
Edwin R. van Dam, Jack H. Koolen, and Hajime Tanaka. ``Distance-regular graphs."  \textit{Preprint arXiv:1410.6294} (2014).

\bibitem{Delsarte}
Philippe Delsarte. ``An algebraic approach to the association schemes of coding theory." \textit{Philips Res. Reports Suppls.} \textbf{10} (1973): vi+-97.




\bibitem{Haemers}
Willem H. Haemers.  ``Interlacing eigenvalues and graphs." \textit{Linear Alg. Appl. } \textbf{226} (1995): 593-616.



%

\bibitem{Jordan}
Camille Jordan. ``Theoremes sur les groupes primitifs.'' \textit{J.  Math Pures Appl.} \textbf{17} (1871): 351-367.

\bibitem{kivva-geometric}
Bohdan Kivva. ``A characterization of Johnson and Hamming graphs and proof of Babai's conjecture.'' \textit{To appear on arXiv}.

\bibitem{Bang-Koolen-conj}
Jacobus H. Koolen, and Sejeong Bang. ``On distance-regular graphs with smallest eigenvalue at least $-m$.'' \textit{J. of Combinatorial Theory, Ser. B} \textbf{100.6} (2010): 573-584.

\bibitem{Liebeck}
Martin W. Liebeck.  ``On minimal degrees and base sizes of primitive permutation groups.'' \textit{Archiv der Mathematik  }\textbf{43.1} (1984): 11-15.

\bibitem{Liebeck-Saxl}
Martin W. Liebeck, Jan Saxl. ``Minimal Degrees of Primitive Permutation Groups, with an Application
to Monodromy Groups of Covers of Riemann Surfaces."
\textit{Proc. London Math. Soc.} \textbf{3.2}  (1991): 266-314.



\bibitem{Metsch}
Klaus Metsch. ``A characterization of Grassmann graphs.'' \textit{European J. Combin.} \textbf{16} (1995): 639-644.

\bibitem{Neumaier}
Arnold Neumaier.``Strongly regular graphs with smallest eigenvalue $-m$.'' \textit{Archiv der Mathematik} \textbf{33.1} (1979): 392-400.

\bibitem{Ostrowski}
Alexander M. Ostrowski.  \textit{``Solution of Equations and Systems of Equations."} 
 Elsevier, 1966.


\bibitem{RS-motion}
Alexander Russell, and Ravi Sundaram. ``A note on the asymptotics and computational complexity of graph distinguishability.'' \textit{The Electronic J. Combin.} \textbf{5.1} (1998): 23-30.


\bibitem{Seidel}
Johan Jacob Seidel.``Strongly regular graphs with (-1, 1, 0) adjacency matrix having eigenvalue 3." \textit{Linear Alg. Appl.} \textbf{1.2} (1968): 281-298.

\bibitem{Smith}
Derek H. Smith.  ``Primitive and imprimitive graphs.'' \textit{The Quarterly J. of Math.} \textbf{22.4} (1971): 551-557.




\bibitem{ter-local}
Paul Terwilliger. ``A new feasibility condition for distance-regular graphs.'' \textit{Discrete Math.} \textbf{61} (1986): 311-315.

\bibitem{antipodal-covers}
John Van Bon, and Andries E. Brouwer. ``The distance-regular antipodal covers of classical distance-regular graphs.'' \textit{In Colloq. Math. Soc. Janos Bolyai, Proc. Eger,} vol. 1988,  (1987): 141-166.





\bibitem{Wielandt}
Helmut Wielandt.  ``Absch\"atzungen f\"ur den Grad einer Permutationsgruppe von vorgeschriebenem Transitivit\"atsgrad." \textit{Schriften des Math. Seminars und des Instituts f\"ur angew. Math. der Universit\"at Berlin} \textbf{2} (1934): 151-174.









\end{thebibliography}
\end{document}